\newtheorem{theorem}{Theorem}[section] 
\newtheorem{lemma}[theorem]{Lemma}     
\newtheorem{corollary}[theorem]{Corollary}
\newtheorem{proposition}[theorem]{Proposition}
\newtheorem*{conjecture}{Conjecture}
\newtheorem*{unthm}{Theorem}
\theoremstyle{definition}
\newtheorem{definition}[theorem]{Definition}
\newcommand{\symgp}{\mathfrak{S}}
\newcommand{\hecke}{\mathscr{H}}
\newcommand{\hecku}{\underline{\mathscr{H}}}
\newcommand{\setx}{\underline{\mathscr{X}}^2}
\newcommand{\sety}{\underline{\mathscr{Y}}^2}
\newcommand{\setz}{\underline{\mathscr{Z}}^2}
\newcommand{\std}[1]{\mathop{\rm Std}(#1)}
\newcommand{\Endo}{\mathop{\rm End}\nolimits}
\newcommand{\End}[2]{\Endo_{#1}(#2)_{#1}}
\newcommand{\trace}{\mathop{\rm Tr}\nolimits}
\title[Vertices for Iwahori--Hecke algebras]%
	{Vertices for Iwahori--Hecke algebras and the Dipper--Du conjecture}
\author{James R. Whitley}
\address{School of Mathematics, University of Birmingham, Birmingham, B15 2TT, United Kingdom}
\email{jrw536@bham.ac.uk}
\subjclass[2010]{20C30 (primary), 20C08, 16G99 (secondary)}
\dedicatory{Dedicated to the memory of Anton Evseev}
\thanks{The author is supported through the EPSRC}
\begin{document}

\begin{abstract}
Let $\hecke_n$ denote the Iwahori--Hecke algebra corresponding to the symmetric group $\symgp_n$. We set up a Green correspondence for bimodules of these Hecke algebras, and a Brauer correspondence between their blocks. We examine Specht modules for $\hecke_n$ and compute the vertex of certain Specht modules, before using this to give a complete classification of the vertices of blocks of $\hecke_n$ in any characteristic. Finally, we apply this classification to resolve the Dipper--Du conjecture about the structure of vertices of indecomposable $\hecke_n$-modules.
\end{abstract}

\maketitle

\section{Introduction}

Denote by $\symgp_n$ the symmetric group on $n$ letters generated by the elementary transpositions $s_i$, and define the Iwahori--Hecke algebra of type $A_{n-1}$ (henceforth just known as a Hecke algebra) in the following way. Let $F$ be an algebraically closed field of characteristic $p \geq 0$, pick $q\in F^\times$, and denote by $\hecke_n := \hecke_n(F, q)$ the associative algebra over $F$ generated by the set:
\[\{T_{i}: i = 1, \dots, n-1\}\] with relations:
\[(T_{i} - q)(T_{i}+1) = 0\text{ for }1\leq i \leq n-1,\]
\[T_{i}T_{j} = T_{j}T_{i}\text{ for }\lvert i-j\rvert  > 1,\]
\[T_{i}T_{i+1}T_{i} = T_{i+1}T_{i}T_{i+1}\text{for }1\leq i < n-1.\]
$\hecke_n$ has an $F$-basis $\{T_w : w\in \symgp_n\}$ (see for example~\cite[\S 1]{mathas}) where: 
\[T_w = T_{i_1}\cdots T_{i_t}\]
if $w = s_{i_1}\cdots s_{i_t}$ is a reduced expression for $w$. Under this convention we have $T_{s_i} = T_i$.

Let $e$ be the smallest integer such that $1 + q + \dots + q^{e-1} = 0$ if it exists, otherwise define $e = 0$. This is the \emph{quantum characteristic} of $\hecke_n$. In this paper, we will focus on the case where $e > 1$. If $p > 0$, then either $(e,p) := \text{hcf}(e,p) = 1$ and $q$ is a primitive $e$-th root of unity, or $e = p$ and $q = 1$. If $p=0$, then $e \neq 0$ means that $q$ is also an $e$-th root of unity. For more on the structure of $\hecke_n$, particularly its structure as a cellular algebra, see for example~\cite{mathas}. 

Relative projectivity and vertices of Hecke algebras were first introduced by Jones in~\cite{jones}, generalising the results from local representation theory of finite groups (see for example~\cite{alperin}). Let $\lambda$ be a composition of $n$ (writing $\lambda \vDash n$), with corresponding parabolic subgroup $\symgp_\lambda$ of $\symgp_n$, and parabolic subalgebra $\hecke_\lambda = \langle T_w : w\in \symgp_\lambda\rangle$ of $\hecke_n$. If $M$ is a $\hecke_n$-module, we say that $\symgp_\lambda$ is a \emph{vertex} of $M$ if $M$ is relatively $\hecke_\lambda$-projective, and if for any $\mu\vDash n$ with $M$ relatively $\hecke_\mu$-projective, then a conjugate of $\symgp_\lambda$ is a subgroup of $\symgp_\mu$. In~\cite{du}, Du gave a Green correspondence for modules of Hecke algebras, analagous to the classical correspondence in modular representation theory of finite groups. The main aim of this paper is to extend these notions to bimodules, and in particular to blocks, leading us to a Brauer correspondence.
\begin{unthm}[Brauer correspondence for Hecke algebras]
Let $n = a + de$, with $\mu = (a, de)$, $\tau = (1^a, de)$ and $\lambda = (1^a, \lambda_1, \dots, \lambda_s)$, where $(\lambda_1, \dots, \lambda_s)\vDash de$ and $\lambda_i \neq 1$ for all $i$. Then there is a one-to-one correspondence between blocks of $\hecke_\mu$ with vertex $(\symgp_\lambda,\symgp_\lambda)$ and blocks of $\hecke_n$ with the same vertex. 
\end{unthm}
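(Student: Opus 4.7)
The plan is to apply the Green correspondence for $(\hecke_n,\hecke_n)$-bimodules established earlier in the paper, using the fact that a block of $\hecke_n$ is precisely an indecomposable direct summand of the regular bimodule $\hecke_n$, and likewise for $\hecke_\mu$. Since $\symgp_\lambda \leq \symgp_\tau \leq \symgp_\mu$, the hypothesis on the vertex fits within the framework of that correspondence, and the theorem will follow once one checks that the bijection it supplies restricts to a bijection between the indecomposable summands of the two regular bimodules.

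Let $B$ be a block of $\hecke_n$ with bimodule vertex $(\symgp_\lambda,\symgp_\lambda)$. The strategy is to decompose the restriction of $\hecke_n$ to an $(\hecke_\mu,\hecke_\mu)$-bimodule via a Mackey-style formula indexed by the $(\symgp_\mu,\symgp_\mu)$-double cosets in $\symgp_n$. The trivial double coset contributes the regular bimodule $\hecke_\mu$, and one would argue that every non-trivial double coset contributes a bimodule whose vertex is strictly smaller than $(\symgp_\lambda,\symgp_\lambda)$. Since the Green correspondent $f(B)$ is characterised as the unique indecomposable summand of the restriction carrying the full vertex, it must then appear as a summand of $\hecke_\mu$ itself and hence be a block of $\hecke_\mu$. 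The reverse direction follows by applying the same analysis to $\hecke_n\otimes_{\hecke_\mu} b \otimes_{\hecke_\mu}\hecke_n$ for a block $b$ of $\hecke_\mu$, and the mutual inverseness of the two assignments is inherited directly from the general bimodule Green correspondence.

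I expect the main obstacle to be precisely this vertex comparison across double cosets: showing that for every non-trivial $w\in\symgp_\mu\backslash\symgp_n/\symgp_\mu$, no conjugate of $\symgp_\lambda$ can be contained in $\symgp_\mu\cap{}^w\!\symgp_\mu$. The hypothesis $\lambda_i\neq 1$ for $i\geq 1$ should be critical here: it forces any copy of $\symgp_\lambda$ inside $\symgp_\mu$ to live entirely in the $\symgp_{de}$ factor (the $a$ singleton parts contribute nothing of substance), and intersecting $\symgp_\mu$ with any non-trivial conjugate then necessarily breaks one of the blocks $\symgp_{\lambda_i}$ of size $\geq 2$. The intermediate partition $\tau=(1^a,de)$ is exactly what records the ``core/weight'' split that makes this argument go through. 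Once this combinatorial fact is secured, the bimodule Green correspondence delivers the theorem.
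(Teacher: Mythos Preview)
Your overall architecture matches the paper's: use the bimodule Green correspondence, decompose $\hecke_n=\bigoplus_{d_i\in\mathscr{D}_{\mu,\mu}^{(n)}}\hecke_\mu T_{d_i}\hecke_\mu$ as an $(\hecke_\mu,\hecke_\mu)$-bimodule, and show that no non-trivial double coset piece can contain a summand with vertex $(\symgp_\lambda,\symgp_\lambda)$. The gap is in how you propose to carry out that last step.

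Your claimed combinatorial fact --- that no $\symgp_n$-conjugate of $\symgp_\lambda$ lies in $\symgp_\mu\cap{}^{d_i}\symgp_\mu$ for $d_i\neq 1$ --- is false. With $\mu=(a,de)$ one has $\symgp_\mu\cap\symgp_\mu^{d_i}=\symgp_{(a-i,i,i,de-i)}$, and when $a$ is large (say $a-1\ge de$) one can embed all of $\symgp_{\lambda_1}\times\cdots\times\symgp_{\lambda_s}$ into the $\symgp_{a-i}$ factor. Your heuristic that ``any copy of $\symgp_\lambda$ inside $\symgp_\mu$ must live entirely in the $\symgp_{de}$ factor'' is simply wrong for $\symgp_\mu$-conjugates. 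So even if the double coset piece were $(\symgp_\mu\cap{}^{d_i}\symgp_\mu,\symgp_\mu)$-projective, this would not rule out the vertex $(\symgp_\lambda,\symgp_\lambda)$.

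The paper circumvents this by passing from $(\hecke_\mu,\hecke_\mu)$-bimodules down to $(\hecke_\tau,\hecke_\tau)$-bimodules, where $\hecke_\tau\cong\hecke_{de}$ and the troublesome $\symgp_a$ factor has been trivialised. This is not a soft reduction: one needs an explicit basis computation (the paper's Lemma~\ref{main_h_i}) to show that, as an $(\hecke_\tau,\hecke_\tau)$-bimodule, $\hecke_\mu T_{d_i}\hecke_\mu$ is $(\symgp_{\tau_i},\symgp_\tau)$-projective with $\tau_i=(1^{a+i},de-i)$. Only then does the cycle-type argument work: a fixed-point-free $\symgp_\lambda\subseteq\symgp_\tau$ contains an element of cycle type $\lambda_1\cdots\lambda_s$ on $de$ points, which cannot live in $\symgp_{de-i}$ for $i\ge 1$. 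Transitivity (Corollary~\ref{useful_corl}) lifts this back to the $(\hecke_\mu,\hecke_\mu)$ setting. You allude to $\tau$ recording a ``core/weight split'', but you do not identify this restriction-to-$\hecke_\tau$ step or the Hecke-algebra calculation it requires; without it, the vertex comparison you need does not go through.
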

Given this, we are able to explictly compute vertices of the blocks of $\hecke_n$, by identifying the vertex of a block of the right $\hecke_\mu$, and identifying its Brauer correspondent. To do this, we need the following definitions. Given $n\in \mathbb{N}$, write $n$ as its \emph{$e$-$p$-adic expansion} by:
\[n = a_{-1} + a_0e + a_1ep + \dots a_tep^t\] 
where $0\leq a_{-1}< e$ and $0\leq a_i < p$, for $i \geq 0$. If $n$ has the above $e$-$p$-adic expansion, the \emph{standard maximal $e$-$p$-parabolic subgroup} of $\symgp_n$ is the subgroup corresponding to the composition: 
\[(1^{a_{-1}}, e^{a_0}, (ep)^{a_1}, \dots, (ep^t)^{a_t})\vDash n.\]
A general \emph{$e$-$p$-parabolic subgroup} of $\symgp_n$ corresponds to a composition $\tau = (\tau_1, \dots, \tau_s)$ of $n$ which has for each $i$, $\tau_i = 1$ or $\tau_i = ep^{r_i}$ for some $r_i\geq 0$.

By Nakayama's Conjecture (see for instance~\cite[Corollary 5.38]{mathas}), we can label the blocks of $\hecke_n$ by $e$-cores and $e$-weights. Using these definitions, we can state the main result of this paper.

\begin{unthm}[Classification of vertices of blocks of Hecke algebras]
Let $F$ be an algebraically closed field, $q\neq 0 \in F$ with quantum characteristic $e \neq 0$, and $B = B_{\rho, d}$ the block of $\hecke_n := \hecke_n(F,q)$ corresponding to the $e$-core $\rho$ and $e$-weight $d$ (so in particular $n = |\rho| + ed$). If $d = 0$, then $B$ is a projective $(\hecke_n, \hecke_n)$-bimodule. Otherwise, let $\tau = (\tau_1, \dots, \tau_s)$ be the composition corresponding to the $e$-$p$-adic expansion of $de$, and define $\lambda = (1^{|\rho|},\tau_1, \dots, \tau_s)$. Then the vertex of $B$ as a $(\hecke_n, \hecke_n)$-bimodule is $(\symgp_\lambda, \symgp_\lambda)$. 
\end{unthm}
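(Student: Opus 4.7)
The plan is to split the argument into the cases $d=0$ and $d>0$. When $d=0$ we have $n=|\rho|$, so $B_{\rho,0}$ is a defect-zero block of $\hecke_n$: it contains a unique simple module and is Morita equivalent to $F$, hence separable, which forces it to be projective as a $(\hecke_n,\hecke_n)$-bimodule.

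For $d>0$, set $a=|\rho|$ and $\mu=(a,de)$. The strategy is to identify a block of $\hecke_\mu$ whose Brauer correspondent under the theorem above is $B$, compute that block's vertex directly, and then transport the answer back to $B$. Using the tensor factorisation $\hecke_\mu \cong \hecke_a \otimes_F \hecke_{de}$, blocks of $\hecke_\mu$ decompose as outer tensor products of blocks of $\hecke_a$ and $\hecke_{de}$, so the natural candidate is $B' = B_1\otimes B_2$ with $B_1 = B_{\rho,0}$ the defect-zero block of $\hecke_a$ labelled by $\rho$ (projective as a bimodule by the first case) and $B_2 = B_{\emptyset,d}$ the principal block of $\hecke_{de}$. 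Because $B_1$ is projective, the vertex of $B'$ has the form $(\symgp_{1^a}\times V,\symgp_{1^a}\times V)$, where $V$ is the vertex of $B_2$ as a $(\hecke_{de},\hecke_{de})$-bimodule.

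The main technical obstacle is then to prove $V = \symgp_\tau$, where $\tau=(\tau_1,\dots,\tau_s)$ is the composition coming from the $e$-$p$-adic expansion of $de$ (note that, since $de$ is a multiple of $e$, the digit $a_{-1}$ vanishes and each $\tau_i\geq e>1$, so $\lambda=(1^a,\tau_1,\dots,\tau_s)$ is a valid composition for the Brauer correspondence). For the upper bound I would invoke the vertex calculations for Specht modules established earlier in the paper: every Specht module in $B_2$ is relatively $\hecke_\tau$-projective, and a Mackey-style argument combined with the cellular filtration of $B_2$ promotes this to relative $(\hecke_\tau,\hecke_\tau)$-projectivity of $B_2$ as a bimodule. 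For the lower bound I would exhibit an indecomposable module in $B_2$ - for example a Specht or Young module whose vertex has been computed to be exactly $\symgp_\tau$ - which forces any bimodule vertex of $B_2$ to contain $\symgp_\tau$ on each side up to conjugacy, yielding equality.

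Finally, applying the Brauer correspondence theorem to $B'$ with the composition $\lambda=(1^a,\tau_1,\dots,\tau_s)$ matches the vertex $(\symgp_\lambda,\symgp_\lambda)$ of $B'$ with that of some block of $\hecke_n$. A short bookkeeping with Nakayama's conjecture - tracing $e$-cores and $e$-weights through the correspondence, and noting that the factor $B_{\rho,0}$ contributes the $e$-core $\rho$ while $B_{\emptyset,d}$ contributes the $e$-weight $d$ - identifies that block as $B_{\rho,d}=B$, giving the stated vertex.
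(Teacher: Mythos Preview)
Your overall architecture matches the paper: handle $d=0$ by semisimplicity, then for $d>0$ compute the vertex of $b=b_{\rho,0}\otimes b_{\emptyset,d}$ in $\hecke_\mu$ and push it up via the Brauer correspondence. The lower bound for the vertex of $B_2=B_{\emptyset,d}$ via an explicit module (the sign module $S^{(1^{de})}$ in the paper) together with the reduction from one-sided to two-sided vertices is also in line with the paper's Proposition~\ref{lower_bound}.

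There are, however, two genuine gaps.

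\textbf{The upper bound for the vertex of $B_{\emptyset,d}$.} Your proposed argument---that every Specht module in $B_2$ is $\hecke_\tau$-projective, and that a cellular filtration then forces $B_2$ to be $(\hecke_\tau,\hecke_\tau)$-projective as a bimodule---does not work. First, the paper never establishes relative $\hecke_\tau$-projectivity for arbitrary Specht modules in the block; only the sign module is treated (Theorems~\ref{sign_zero} and~\ref{sign_vertex}). Second, and more fundamentally, relative projectivity is \emph{not} closed under extensions: if $0\to A\to C\to B\to 0$ is exact with $A$ and $B$ relatively $H$-projective, $C$ need not be. So even granting the Specht module claim, a cell filtration of $B_2$ would not promote one-sided projectivity of the subquotients to projectivity of $B_2$, let alone to two-sided projectivity as a bimodule. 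The paper's actual mechanism is entirely different: it shows directly that $B_2\mid B_2\otimes_{\hecke_\tau}B_2$ by writing down explicit bimodule maps $\varphi,\psi$ with $\psi\varphi=\mathbbm{1}$, and the key point is that the relative trace $x_{B_2}=\trace_\tau^{(de)}(e_{B_2})$ is invertible in $Z(B_2)$. Invertibility is detected by evaluating on the one-dimensional sign module, where it becomes the nonvanishing of $N_\tau=\sum_{w\in\mathscr{R}_\tau^{(de)}}q^{-\ell(w)}$; this in turn is exactly the Poincar\'e-polynomial computation of Section~\ref{subsubfun}. So the sign module is doing double duty---it supplies both the lower bound and (via $N_\tau\neq 0$) the upper bound---and there is no filtration argument.

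\textbf{Identifying the Brauer correspondent.} Saying this is ``short bookkeeping with Nakayama's conjecture'' understates what is needed. The Brauer correspondence here is defined abstractly ($b\mid B$ as $(\hecke_\mu,\hecke_\mu)$-bimodules with $B$ unique), and there is no a priori formula relating cores and weights across it. The paper pins down $b^{\hecke_n}=B_{\rho,d}$ by a module-theoretic witness: the indecomposable $\hecke_\mu$-module $S^\rho\otimes S^{(1^{de})}$ lies in $b$, its Green correspondent in $\hecke_n$ exists, and---crucially---Theorem~\ref{specht_rest} shows $S^\rho\otimes S^{(1^{de})}\mid S^{\tilde\rho}$ as $\hecke_\mu$-modules, so by Corollary~\ref{maps_corol} the Green correspondent is a summand of $S^{\tilde\rho}$ and hence lies in $B_{\rho,d}$. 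Corollary~\ref{block_test} then identifies $b^{\hecke_n}$. The Specht-restriction result Theorem~\ref{specht_rest} (proved by a modular reduction argument comparing dimensions across $K$, $\mathcal{O}$, and $F$) is the substantive ingredient here, not bookkeeping.
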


In~\cite{dipperdu}, Dipper and Du showed that for trivial and alternating source modules of $\hecke_n$, the vertex will always be an $e$-$p$-parabolic subgroup, and conjectured that this should hold for any $\hecke_n$-modules. This was shown to be true if $p = 0$ in~\cite{du}, and proved for blocks of finite representation type (i.e. by~\cite[Theorem 1.2]{erdmann} blocks of $e$-weight 1) in~\cite{schmider}. As a corollary to the previous theorem, we are able to resolve this conjecture, proving:

\begin{conjecture}[Dipper--Du]
\label{dduconj}
Let $F$ be an (algebraically closed) field of characteristic $p > 0$, $n\in \mathbb{N}$, and $q\neq 0\in F$ with quantum characteristic $e > 0$. Then the vertices of indecomposable $\hecke_n(F,q)$-modules are $e$-$p$-parabolic.
\end{conjecture}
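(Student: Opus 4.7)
The plan is to derive the Dipper--Du conjecture as a consequence of the Classification of Vertices of Blocks stated above, combined with Du's Green correspondence for $\hecke_n$-modules~\cite{du}, and an induction on $n$. Let $M$ be an indecomposable $\hecke_n$-module, lying in the block $B = B_{\rho, d}$. If $d = 0$, the classification tells us $B$ is projective as a bimodule, hence $M$ is projective and has vertex $\symgp_{(1^n)}$, which is $e$-$p$-parabolic. So I assume $d \geq 1$. The classification then supplies the $e$-$p$-parabolic composition $\lambda = (1^{|\rho|}, \tau_1, \dots, \tau_s)$, with each $\tau_i = ep^{r_i}$, such that $B$ is relatively $(\hecke_\lambda, \hecke_\lambda)$-projective as a bimodule. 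Using $M \cong B \otimes_{\hecke_n} M$ and applying $- \otimes_{\hecke_n} M$ to the splitting witnessing the bimodule-projectivity of $B$, one obtains that $M$ is relatively $\hecke_\lambda$-projective as a left module.

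Next, by Green correspondence arguments (following~\cite{du}), $M$ has a Green correspondent $M'$, an indecomposable $\hecke_\lambda$-module sharing the same vertex $\symgp_\nu \leq \symgp_\lambda$ as $M$, up to conjugation in $\symgp_n$. Since $F$ is algebraically closed and $\hecke_\lambda \cong \bigotimes_i \hecke_{\tau_i}$, the standard tensor-product decomposition of indecomposable modules over tensor products of finite-dimensional $F$-algebras yields $M' \cong \bigotimes_i M'_i$ with $M'_i$ indecomposable over $\hecke_{\tau_i}$, and the vertex splits correspondingly as $\symgp_\nu = \prod_i \symgp_{\nu_i}$ with $\symgp_{\nu_i} \leq \symgp_{\tau_i}$ the vertex of $M'_i$. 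Hence $\symgp_\nu$ is $e$-$p$-parabolic in $\symgp_n$ if and only if each $\symgp_{\nu_i}$ is $e$-$p$-parabolic in $\symgp_{\tau_i}$, reducing the conjecture to the analogous statement for each $\hecke_{\tau_i}$.

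When $\tau_i < n$ this follows by induction on $n$. The residual case $\tau_i = n$ forces $|\rho| = 0$, $s = 1$ and $d = p^r$, and one is left to prove the conjecture directly for the maximal-defect block $B_{\emptyset, p^r}$ of $\hecke_{ep^r}$. This is where I expect the main obstacle to lie: here the bimodule vertex is all of $\symgp_n$, so the classification supplies no reduction. To handle it, I would combine the Specht-module vertex computations established earlier in the paper with a direct analysis of the indecomposables in $B_{\emptyset, p^r}$, perhaps by exploiting structural results on maximal-defect Hecke blocks such as Morita equivalences with wreath-product algebras $\hecke_e \wr \symgp_{p^r}$. The case $r = 0$ is covered by Schmider's resolution for blocks of $e$-weight $1$~\cite{schmider}, which would serve as the base of a nested induction on $r$.
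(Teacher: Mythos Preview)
Your reduction stalls exactly where you say it does, and the residual case is not a minor loose end: when $n = ep^r$ and $B = B_{\emptyset, p^r}$, the block classification gives vertex $(\symgp_n, \symgp_n)$, so your inductive machinery yields no information whatsoever. Your suggested remedies are not available---there is no established Morita equivalence of $B_{\emptyset, p^r}$ with a wreath product $\hecke_e \wr \symgp_{p^r}$ in positive characteristic, and Schmider's weight-$1$ result does not obviously bootstrap to higher $r$. So the proposal, as it stands, does not close.

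The paper's argument sidesteps this entirely by reversing the direction of descent. Rather than restricting from $\hecke_n$ to $\hecke_\lambda$ (the $e$-$p$-parabolic dictated by the block of $M$), it goes straight to the vertex $\symgp_\tau$ of $M$ and considers a source $N$, an indecomposable $\hecke_\tau$-module with vertex $\symgp_\tau$. Now $N$ lies in some block $b = b_{\rho_1,d_1} \otimes \cdots \otimes b_{\rho_s,d_s}$ of $\hecke_\tau$, and the classification (applied to each $\hecke_{\tau_i}$) gives $b$ vertex $(\symgp_\lambda, \symgp_\lambda)$ with $\symgp_\lambda \subseteq \symgp_\tau$ an $e$-$p$-parabolic. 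But Theorem~\ref{blocks_and_modules} forces a conjugate of the vertex of $N$, namely $\symgp_\tau$ itself, to sit inside $\symgp_\lambda$; hence $\symgp_\lambda = \symgp_\tau$, which means each $\tau_i$ is $1$ or $ep^{r_i}$. No induction, no residual case. The key move you are missing is to apply the block classification \emph{at the level of the vertex subalgebra} rather than at the top, so that the module in question has full vertex and the comparison with the block's vertex becomes an equality.
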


The paper is structured as follows. The next section introduces the notation we will use. In Section 3, relative projectivity for bimodules is explored and a Green correspondence is given, before we expand it to a Brauer correspondence in Section 4, and give a method of identifying these correspondents. Section 5 looks at indecomposability and restrictions of Specht modules for $\hecke_n$, and Sections 6 and 7 look at the vertex of the sign module, and relative projectivity of blocks, in cases where the characteristic of $F$ is both zero and prime. Our classification is proved in Section 8, before finally using it to resolve the Dipper--Du conjecture in Section 9. 

\section{Preliminaries}

Note that all modules we will be using are right modules unless stated otherwise.

\subsection{Partitions and parabolics}

Here we will briefly recap the combinatorics relating to partitions and Young tableaux. We will use the notation and conventions from~\cite[\S 3]{mathas}. We say that $\lambda$ is a \emph{composition} of $n$, and write $\lambda \vDash n$, if $\lambda = (\lambda_1,\dots,\lambda_s)$ is a tuple of positive integers with $\sum_{i=1}^s \lambda_i = n$.  We say that $\lambda$ is a \emph{partition} of $n$ (and write $\lambda \vdash n$) if $\lambda$ is a composition of $n$, and $\lambda_i \geq \lambda_{i+1}$ for each admissible $i$. We denote the unique partition of $0$ by $\emptyset$. For $\lambda = (\lambda_1, \dots, \lambda_s)\vDash n$, we define the \emph{parabolic subgroup} of $\symgp_n$ corresponding to $\lambda$ as follows:
\begin{align*}\symgp_\lambda &:= \symgp_{\{1,\dots,\lambda_1\}} \times \symgp_{\{\lambda_1+1, \dots, \lambda_1+\lambda_2\}} \times \cdots \times \symgp_{\{\left(\sum_{i=1}^{s-1} \lambda_i\right) +1, \dots , \sum_{i=1}^s \lambda_s\}}\\
& \,\,\cong \symgp_{\lambda_1}\times \cdots \times \symgp_{\lambda_s}.
\end{align*}
Sometimes a more general definition of parabolic subgroup is given, however, as we are only interested in these subgroups up to conjugation in $\symgp_n$, this definition suffices for our purposes. Similarly, we can define a \emph{parabolic subalgebra} $\hecke_\lambda$ of $\hecke_n$ as the following $F$-span:
\[\hecke_\lambda = \langle T_w: w\in \symgp_\lambda\rangle.\]
Note that we can implicitly identify $\hecke_\lambda$ with $\hecke_{\lambda_1}\otimes \cdots\otimes \hecke_{\lambda_s}$, the $s$-fold tensor product over $F$ in the following way. Let $T_{j}$ be a generator of $\hecke_\lambda$ with $j = \sum_{i=1}^{k-1} \lambda_i + l$, for $1 \leq l < \lambda_{k}$. Then our map identifies $T_{j}$ with the following simple tensor:
\[1 \otimes\cdots \otimes 1\otimes T_{l}\otimes 1 \otimes \cdots\otimes 1,\]
where $T_{l}$ lies in the $k$-th part of the tensor product. We do this implicitly throughout this paper.

For a partition $\lambda$, we can also form its corresponding Young diagram, and fill in the boxes using the numbers $1, \dots, n$ exactly once to get a Young tableau. For more about these, see~\cite[\S3.1]{mathas}, or~\cite[\S 2.7]{jameskerber}. We say a tableau is standard if the entries are increasing along all rows and down all columns, and denote the set of standard $\lambda-$tableaux by $\std{\lambda}$. In particular, we will use the notation $\mathfrak{t}^\lambda$ to denote the standard tableau where the numbers $1, \dots, n$ are placed in increasing order, first along the top row of the tableau of shape $\lambda$, then the second row, etc.

We also require the concepts of $a$-cores and $a$-hooks; for some $a > 0$. Given $\lambda\vdash n$, a \emph{$a$-hook} is a chain of boxes of length $a$ that can be removed from the rim of a diagram of shape $\lambda$ to get a diagram of shape $\rho$ where $\rho \vdash n-a$. The \emph{$a$-core} of $\lambda$ is the partition associated to the diagram gained from the diagram of shape $\lambda$ by recursively removing as many $a$-hooks as possible. This is uniquely determined, for example see~\cite[Theorem 2.7.16]{jameskerber}. Finally, the \emph{$a$-weight} of a partition, is the number of $a$-hooks you need to remove to reach its $a$-core. 

By Nakayama's Conjecture (as stated in~\cite[Theorem 6.1.21]{jameskerber}), the blocks of the group algebra $F\symgp_n$ can be parameterised by $p$-cores and $p$-weights, where $p$ is the characteristic of $F$. This includes the case where $p = 0$, where every partition is a zero-core, and thus lies in its own block.

Similarly, when $e > 0$, by~\cite[Corollary 5.38]{mathas}, the blocks of $\hecke_n$ can be labelled by $e$-hooks and $e$-weights, and we denote the block of $\hecke_n$ with $e$-core $\rho$ and $e$-weight $d$ by $B_{\rho, d}$.

\subsection{Coset representatives for $\symgp_n$}

Let $\sigma, \lambda, \nu \vDash n$ be compositions with both $\symgp_\lambda \subseteq \symgp_\sigma$, and $\symgp_\nu \subseteq \symgp_\sigma$. Denote by $\mathscr{R}_{\lambda}^\sigma$ the set of \emph{minimal right coset representatives} of $\symgp_{\lambda}$ in $\symgp_\sigma$, denote by $\mathscr{L}_{\lambda}^\sigma$ the set of \emph{minimal left coset representatives} of $\symgp_{\lambda}$ in $\symgp_\sigma$, and denote $\mathscr{D}_{\lambda, \nu}^\sigma$ to be the set of \emph{minimal double coset representatives} of $\symgp_\lambda$ and $\symgp_\nu$ in $\symgp_\sigma$. Note that by a minimal coset representative, we mean the unique element in that coset which is shortest with respect to the usual length function $\ell$ on $\symgp_n$. Properties of these can be found in~\cite[\S3, \S4]{mathas}. As a consequence of these properties, we can determine specific double coset representatives, as stated in the following lemma.
\begin{lemma} 
\label{mu-mu_cosets}
Let $\mu = (a,b) \vDash a+b = n$. Then:
\[\mathscr{D}^{(n)}_{\mu,\mu}=\left\{d_k = \prod_{i=1}^k (a-k+i,a+i) \,\bigg|\, k=0,\dots,\min(a,b)\right\}.\]
\end{lemma}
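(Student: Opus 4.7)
The proof plan rests on the standard characterisation (see~\cite[\S4]{mathas}) that $w \in \mathscr{D}^{(n)}_{\mu,\mu}$ precisely when both $w$ and $w^{-1}$ are increasing on each of the two blocks $\{1,\dots,a\}$ and $\{a+1,\dots,n\}$ of $\mu$. The first step is to verify this directly for each $d_k$. Since $d_k$ is a product of $k$ disjoint transpositions it is an involution, so $d_k^{-1} = d_k$ and only one check is required. Its one-line form is $1,2,\dots,a-k,\,a+1,a+2,\dots,a+k,\,a-k+1,\dots,a,\,a+k+1,\dots,n$, which is visibly increasing on both $\{1,\dots,a\}$ and $\{a+1,\dots,n\}$.

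For the converse, I would introduce the statistic $k(w) := |w(\{1,\dots,a\}) \cap \{a+1,\dots,n\}|$ and observe that it is constant on double cosets $\symgp_\mu w \symgp_\mu$, since left- and right-multiplication by elements of $\symgp_\mu$ permute the two blocks internally. The possible values are exactly $\{0,1,\dots,\min(a,b)\}$, and $k(d_k) = k$, so the $d_k$ already lie in pairwise distinct double cosets. It remains to show that any minimal double coset representative $w$ with $k(w) = k$ must equal $d_k$. Write $I = \{1,\dots,a\} \cap w^{-1}(\{1,\dots,a\})$ and $J = \{1,\dots,a\} \setminus I$, so $|J| = k$. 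Since $w$ is increasing on $\{1,\dots,a\}$ and sends $I$ into $\{1,\dots,a\}$ (smaller values) and $J$ into $\{a+1,\dots,n\}$ (larger values), the set $I$ must consist of the first $a-k$ integers and $J = \{a-k+1,\dots,a\}$; applying the same argument to $w^{-1}$ and to the second block pins down the images. Explicitly, the $w^{-1}$-increasing condition on $\{a+1,\dots,n\}$ forces $w(J) = \{a+1,\dots,a+k\}$ (the smallest available targets), and the increasing condition on $J$ itself then gives $w(a-k+i) = a+i$. The symmetric analysis on the second block yields $w(a+i) = a-k+i$ for $1 \leq i \leq k$, with fixed points elsewhere, so $w = d_k$.

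The main obstacle, such as it is, is simply bookkeeping the four simultaneous increasing constraints (on $w$ and on $w^{-1}$, on each of the two blocks); once the invariant $k$ has been isolated, these constraints cascade cleanly to determine $w$. No length computation is required, as minimality is absorbed into the characterisation of $\mathscr{D}^{(n)}_{\mu,\mu}$.
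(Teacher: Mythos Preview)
Your argument is correct. The paper itself does not supply a proof of this lemma: it is stated as a direct consequence of the standard properties of distinguished coset representatives recorded in~\cite[\S3,\S4]{mathas}, and is used without further justification. Your write-up is precisely the natural verification one would give from that characterisation (both $w$ and $w^{-1}$ increasing on the $\mu$-blocks), and the invariant $k(w)$ you introduce is the standard way to index the $(\symgp_\mu,\symgp_\mu)$-double cosets in this two-block case. One small remark: in the step ``since $w$ is increasing on $\{1,\dots,a\}$ and sends $I$ into $\{1,\dots,a\}$ and $J$ into $\{a+1,\dots,n\}$, the set $I$ must consist of the first $a-k$ integers'', you are implicitly using that every element of $w(I)$ is smaller than every element of $w(J)$, which is immediate but worth saying; similarly when you pin down $w|_I$ and $w|_{I'}$ as the identity. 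Otherwise the bookkeeping is clean and nothing is missing.
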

\section{Relative projectivity and the Green correspondence for bimodules}

Let $A$ be an $F$-algebra with subalgebra $A'\subseteq A$. Recall that an $A$-module $M$ is \emph{relatively $A$-projective} (or just $A$-projective), if for any $A$-modules $V$ and $W$ with $A$-algebra maps $\alpha$ and $\beta$ making the below diagram exact, the existence of an $A'$ map from $M$ to $V$ making the diagram commute, implies there is also an $A$ map from $M$ to $V$ making the diagram commute.
\begin{diagram}
	&	&	M & &\\
	& \ldDashto & \dTo_{\alpha} & &\\
V 	& \rOnto_{\beta} & W & \rTo & 0
\end{diagram}
Note that if we take $A'=F$, we obtain our usual notion of projectivity. A more practical definition of relative projectivity for $A$-modules is given by Higman's criterion (see for example~\cite[Theorem 2.34]{jones} for the Hecke algebra version) as stated below. For two modules $M$ and $N$, we use the notation $ M \mid N$ to say that $M$ is isomorphic to a direct summand of $N$.
\begin{theorem}[Higman's criterion] 
\label{higman} 
Let $A'\subseteq A$ be $F$-algebras, and let $M$ be a right $A$-module. Then the following are equivalent:
\begin{enumerate}[(a)]
\item $M$ is $A'$-projective,
\item $M\mid M\otimes_{A'} A$,
\item $M\mid U \otimes_{A'} A$ for some $A'$-module $U$,
\end{enumerate}
\end{theorem}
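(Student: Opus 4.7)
The plan is to establish the cycle $(a) \Rightarrow (b) \Rightarrow (c) \Rightarrow (a)$, in the standard style for Higman's criterion. The implication $(b) \Rightarrow (c)$ is immediate by taking $U = M$, so the real content lies in the other two.

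For $(a) \Rightarrow (b)$, I would consider the $A$-linear multiplication map $\mu \colon M \otimes_{A'} A \to M$ defined by $m \otimes a \mapsto ma$. It is a surjection of right $A$-modules and admits the obvious $A'$-linear section $s \colon M \to M \otimes_{A'} A$, $m \mapsto m \otimes 1$. Plugging $V = M \otimes_{A'} A$, $W = M$, $\beta = \mu$, $\alpha = \mathrm{id}_M$, and the $A'$-lift $s$ into the defining diagram for $A'$-projectivity produces an $A$-linear section of $\mu$, so $M$ is a direct summand of $M \otimes_{A'} A$.

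For $(c) \Rightarrow (a)$, the strategy is to show that any induced module $U \otimes_{A'} A$ is $A'$-projective; since $A'$-projectivity is inherited by direct summands (by a short diagram chase), this is enough. Given lifting data $\alpha \colon U \otimes_{A'} A \to W$, $\beta \colon V \twoheadrightarrow W$ and an $A'$-map $\gamma \colon U \otimes_{A'} A \to V$ with $\beta\gamma = \alpha$, the trick is to define
\[\tilde{\gamma}(u \otimes a) := \gamma(u \otimes 1) \cdot a,\]
which is manifestly right $A$-linear. Well-definedness on the tensor product reduces to the identity $\gamma(u a' \otimes 1) = \gamma(u \otimes 1)\, a'$ for $a' \in A'$, which holds because $\gamma$ is $A'$-linear and $u a' \otimes 1 = u \otimes a'$ in $U \otimes_{A'} A$. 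A direct check using the $A$-linearity of $\alpha$ then gives $\beta \tilde{\gamma} = \alpha$. To conclude for a summand $M \mid U \otimes_{A'} A$ with $A$-linear projection $\pi$ and inclusion $\iota$, compose the given diagram for $M$ with $\pi$, apply the induced-module case to obtain an $A$-lift $\psi$, and restrict along $\iota$ to produce the required $A$-lift for $M$.

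The main subtlety is the well-definedness of $\tilde{\gamma}$: one must observe that only $A'$-linearity of $\gamma$ is needed to respect the tensor relations, while the right multiplication by $a$ supplies precisely the $A$-linearity that the hypothesis on $\gamma$ was missing. Everything else---the inheritance of relative projectivity by direct summands, and the trivial implication $(b) \Rightarrow (c)$---is formal.
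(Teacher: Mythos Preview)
Your argument is correct and is the standard proof of Higman's criterion: the lifting along the multiplication map for $(a)\Rightarrow(b)$, the Frobenius-reciprocity-style construction of $\tilde{\gamma}$ for induced modules in $(c)\Rightarrow(a)$, and the routine passage to direct summands are all sound. The paper itself does not supply a proof of this theorem; it simply cites \cite[Theorem~2.34]{jones} as a reference, so there is no in-paper argument to compare against. Your proof is exactly the kind of direct verification one would expect in that reference.
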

We have the following corollaries. First of all, by the second and third criteria, it is clear that if we have $A''\subseteq A' \subseteq A$, and $M$ is an $A''$-projective $A$-module, then it is also an $A'$-projective $A$-module. Similarly we have:
\begin{corollary}
\label{transitivity}
Let $A''\subseteq A'\subseteq A$ be $F$-algebras. Then for an $A$-module $M$, if $M$ is relatively $A'$-projective as an $A$-module, and relatively $A''$-projective as an $A'$-module, then $M$ is relatively $A''$-projective as an $A$-module.
\end{corollary}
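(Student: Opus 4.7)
The plan is to deduce the corollary straight from Higman's criterion (Theorem~\ref{higman}) by chaining the two tensor-summand characterisations. Since $M$ is relatively $A'$-projective as an $A$-module, part (b) of Higman's criterion gives
\[M \mid M\otimes_{A'} A\]
as right $A$-modules. Since $M$, viewed as an $A'$-module, is relatively $A''$-projective, part (c) of Higman's criterion produces an $A''$-module $V$ with
\[M \mid V\otimes_{A''} A'\]
as right $A'$-modules.

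The main step is then to tensor the second relation with $A$ over $A'$: the functor $-\otimes_{A'} A$ is additive, so it preserves direct summands, giving
\[M\otimes_{A'} A \;\Big|\; (V\otimes_{A''} A')\otimes_{A'} A\]
as right $A$-modules. Using the standard associativity isomorphism
\[(V\otimes_{A''} A')\otimes_{A'} A \;\cong\; V\otimes_{A''} A,\]
and combining with the first summand relation, one obtains
\[M \mid M\otimes_{A'} A \mid V\otimes_{A''} A.\]
Since ``is a direct summand of'' is transitive, this yields $M \mid V\otimes_{A''} A$, and applying Higman's criterion (c) in the opposite direction concludes that $M$ is relatively $A''$-projective as an $A$-module.

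There is no real obstacle here: the only point that needs a brief sanity check is that the associativity isomorphism $(V\otimes_{A''} A')\otimes_{A'} A \cong V\otimes_{A''} A$ is an isomorphism of right $A$-modules (and not merely of $F$-vector spaces), which is immediate from the universal property of the tensor product together with the inclusion $A'\subseteq A$. Once that is noted, the argument is a two-line application of Theorem~\ref{higman}.
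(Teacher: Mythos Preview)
Your proof is correct and is exactly the argument the paper has in mind: the corollary is stated without proof immediately after Higman's criterion, with the implication that it follows directly from parts (b) and (c), which is precisely what you have written out.
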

We also have the following corollary about how relative projectivity behaves when tensoring two modules over $F$.
\begin{corollary}
\label{tensors}
Let $A'\subseteq A$ and $B'\subseteq B$ be $F$-algebras, $M$ an $A'$-projective $A$-module, and $N$ a $B'$-projective $B$-module. Then $M\otimes N$ is $A'\otimes B'$-projective as an $A\otimes B$-module.  
\end{corollary}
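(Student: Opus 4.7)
The plan is to apply Higman's criterion (Theorem~\ref{higman}) in both directions, using condition (c) for the conclusion. By hypothesis $M$ is $A'$-projective and $N$ is $B'$-projective, so by Higman's criterion in the direction (a)$\Rightarrow$(b) we have $M\mid M\otimes_{A'} A$ as a right $A$-module and $N\mid N\otimes_{B'} B$ as a right $B$-module. Tensoring summand decompositions over $F$, this gives
\[M\otimes_F N \;\Big|\; (M\otimes_{A'}A) \otimes_F (N\otimes_{B'}B)\]
as a right $A\otimes_F B$-module.

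The key step is then to identify the right-hand side with something of the form $U\otimes_{A'\otimes B'}(A\otimes B)$, so that Higman's criterion (c)$\Rightarrow$(a) can be invoked. Concretely, I would establish the natural isomorphism
\[(M\otimes_{A'}A)\otimes_F (N\otimes_{B'}B) \;\cong\; (M\otimes_F N)\otimes_{A'\otimes_F B'}(A\otimes_F B)\]
of right $A\otimes B$-modules, given on simple tensors by $(m\otimes a)\otimes(n\otimes b)\mapsto (m\otimes n)\otimes (a\otimes b)$. The inverse is the obvious rearrangement, and both maps are well defined because the $A'$-action on $M$ and the $B'$-action on $N$ combine to give precisely the $A'\otimes B'$-balancing on the right-hand side. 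Granted this isomorphism, we conclude $M\otimes N \mid (M\otimes N)\otimes_{A'\otimes B'}(A\otimes B)$, and Higman's criterion (c) (with $U = M\otimes N$) yields that $M\otimes N$ is $A'\otimes B'$-projective as an $A\otimes B$-module.

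The main obstacle is nothing deep; the only thing to be careful about is the bookkeeping of the actions in the natural isomorphism above, in particular checking that the two maps are mutually inverse $F$-linear maps that respect the right $A\otimes B$-structure. Once this standard tensor-product identification is in hand, the argument is a direct application of Higman's criterion in both directions.
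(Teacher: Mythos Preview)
Your proposal is correct and follows essentially the same argument as the paper: apply Higman's criterion to get $M\mid M\otimes_{A'}A$ and $N\mid N\otimes_{B'}B$, tensor over $F$, then use the natural isomorphism $(M\otimes_{A'}A)\otimes_F(N\otimes_{B'}B)\cong (M\otimes N)\otimes_{A'\otimes B'}(A\otimes B)$ on simple tensors $(m\otimes a)\otimes(n\otimes b)\mapsto(m\otimes n)\otimes(a\otimes b)$ to conclude via Higman's criterion again. The paper's proof is the same, only slightly terser about verifying the isomorphism.
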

\begin{proof}
By Higman's criterion, $M \mid M\otimes_{A'}A$ and $N\mid N \otimes_{B'}B$. Therefore tensoring together over $F$ gives us as $A\otimes B$-modules:
\[M \otimes N \mid (M\otimes_{A'} A) \otimes (N \otimes_{B'}B).\]
It is straightforward to verify that the natural map $\varphi$ defined on pure tensors as
\[(m\otimes a)\otimes (n\otimes b) \mapsto (m\otimes n) \otimes (a\otimes b)\]
for $m\in M$, $n\in N$, $a\in A$ and $b\in B$ gives an $A\otimes B$-module isomorphism
\[\varphi: (M\otimes_{A'} A)\otimes (N\otimes_{B'} B) \to (M\otimes N) \otimes_{A'\otimes B'} (A\otimes B).\]
As such, we can conclude by Higman's criterion.
\end{proof}

In~\cite[Theorem 2.29]{jones}, a Mackey formula for Hecke algebras was given, and as a consequence, Jones was able to make concrete the notion of a \emph{vertex} of a $\hecke_n$ module~\cite[Theorem 2.35]{jones}. For a $\hecke_n$-module $M$, this is a parabolic subgroup $\symgp_\lambda$ (for some $\lambda \vDash n$) such that $M$ is $\hecke_\lambda$-projective, and for any $\mu \vDash n$, if $M$ is $\hecke_\mu$-projective, then a conjugate of $\symgp_\lambda$ is contained in $\symgp_\mu$. This is not unique, but it is determined up to conjugation in $\symgp_n$.

Combining the notion of a vertex with our previous corollary, we can show that the vertex of a module also behaves as one would expect when taking tensor products. For the rest of this section, we will be working with $\hecke_{\sigma}$-modules where $\sigma\vDash n$, instead of $\hecke_n$-modules. All definitions and results carry across in the same way, and this helps us work in more generality later on when doing our inductive arguments. We will also in future say that a module $M$ is $\symgp_\lambda$-projective instead of $\hecke_\lambda$-projective to mirror the notation used in~\cite{alperin}.

\begin{theorem}
\label{vertex_tensor}
Let $\tau_1, \sigma_1\vDash n$ and $\tau_2, \sigma_2\vDash m$, with $\symgp_{\tau_i}\subseteq \symgp_{\sigma_i}$ for $i=1,2$. If $M$ is a $\hecke_{\sigma_1}$-module with vertex $\symgp_{\tau_1}$, and $N$ is a $\hecke_{\sigma_2}$-module with vertex $\symgp_{\tau_2}$, then $M\otimes N$ has vertex $(\symgp_{\tau_1}\times\symgp_{\tau_2})$ as a $\hecke_{\sigma_1}\otimes \hecke_{\sigma_2}$-module.
\end{theorem}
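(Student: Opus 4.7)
The plan is to establish the containment of vertices in both directions. First, by applying Corollary~\ref{tensors} to the hypotheses that $M$ is $\symgp_{\tau_1}$-projective and $N$ is $\symgp_{\tau_2}$-projective, $M\otimes N$ is $\symgp_{\tau_1}\times \symgp_{\tau_2}$-projective as an $\hecke_{\sigma_1}\otimes \hecke_{\sigma_2}$-module, so its vertex is (up to conjugation in $\symgp_{\sigma_1}\times\symgp_{\sigma_2}$) contained in $\symgp_{\tau_1}\times \symgp_{\tau_2}$.

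For the reverse containment, I would suppose $M\otimes N$ is $\symgp_{\mu_1}\times\symgp_{\mu_2}$-projective for some $\mu_i$ with $\symgp_{\mu_i}\subseteq \symgp_{\sigma_i}$; by symmetry it suffices to show a conjugate of $\symgp_{\tau_1}$ in $\symgp_{\sigma_1}$ lies in $\symgp_{\mu_1}$. Since $\symgp_{\mu_1}\times\symgp_{\mu_2}\subseteq \symgp_{\mu_1}\times\symgp_{\sigma_2}$, transitivity of relative projectivity makes $M\otimes N$ also $\symgp_{\mu_1}\times\symgp_{\sigma_2}$-projective. Combining Higman's criterion with the Fubini-type isomorphism from the proof of Corollary~\ref{tensors} then yields
\[M\otimes N \,\bigm|\, (M\otimes N) \otimes_{\hecke_{\mu_1}\otimes \hecke_{\sigma_2}}(\hecke_{\sigma_1}\otimes \hecke_{\sigma_2})\cong (M\otimes_{\hecke_{\mu_1}} \hecke_{\sigma_1})\otimes N\]
as $\hecke_{\sigma_1}\otimes\hecke_{\sigma_2}$-modules.

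Restricting both sides along $\hecke_{\sigma_1}\hookrightarrow\hecke_{\sigma_1}\otimes\hecke_{\sigma_2}$ and writing $r=\dim_F N>0$, these become $M^r$ and $(M\otimes_{\hecke_{\mu_1}}\hecke_{\sigma_1})^r$ as $\hecke_{\sigma_1}$-modules. The Krull--Schmidt theorem then gives $M \mid M\otimes_{\hecke_{\mu_1}} \hecke_{\sigma_1}$, so by Higman's criterion $M$ is $\symgp_{\mu_1}$-projective, and hence a conjugate of its vertex $\symgp_{\tau_1}$ in $\symgp_{\sigma_1}$ lies in $\symgp_{\mu_1}$. The symmetric argument handles $\tau_2$, $\mu_2$. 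The main obstacle will be the cancellation step $M^r \mid X^r\Rightarrow M\mid X$: this is an elementary consequence of Krull--Schmidt for finite-dimensional modules but must be phrased carefully, and one must first verify that any vertex of an $\hecke_{\sigma_1}\otimes\hecke_{\sigma_2}$-module may be taken in the product form $\symgp_{\mu_1}\times\symgp_{\mu_2}$, which is natural since the relevant notion of parabolic subalgebra of $\hecke_{\sigma_1}\otimes\hecke_{\sigma_2}$ is precisely $\hecke_{\mu_1}\otimes\hecke_{\mu_2}$.
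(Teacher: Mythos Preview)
Your proposal is correct and follows essentially the same route as the paper: both directions use Corollary~\ref{tensors} for the upper bound, then enlarge the second factor to $\symgp_{\sigma_2}$, apply the Fubini isomorphism, and restrict to $\hecke_{\sigma_1}$ to force $M$ itself to be $\symgp_{\mu_1}$-projective. The only cosmetic difference is that the paper bypasses your Krull--Schmidt cancellation $M^r\mid X^r\Rightarrow M\mid X$ by observing directly that $M\otimes N\cong M^{\oplus\dim N}$ as an $\hecke_{\sigma_1}$-module and that relative projectivity passes to direct summands; this makes the final step one line shorter but is logically equivalent to what you wrote.
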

\begin{proof}
By Corollary~\ref{tensors}, $M\otimes N$ is $(\symgp_{\tau_1}\times \symgp_{\tau_2})$-projective as a $\hecke_{\sigma_1}\otimes \hecke_{\sigma_2}$-module. Suppose that $\symgp_{\lambda_1}\times \symgp_{\lambda_2}$ is a vertex of $M\otimes N$ as a $\hecke_{\sigma_1}\otimes \hecke_{\sigma_2}$-module. Thus a $(\symgp_{\sigma_1}\times \symgp_{\sigma_2})$-conjugate of $\symgp_{\lambda_1}\times \symgp_{\lambda_2}$ is contained in $\symgp_{\tau_1}\times \symgp_{\tau_2}$. As a $\hecke_{\sigma_1}$-module, $M\otimes N$ is $\symgp_{\lambda_1}$-projective since:
\[M\otimes N \mid (M\otimes N)\otimes_{\hecke_{\lambda_1}\otimes \hecke_{\sigma_2}} \hecke_{\sigma_1}\otimes \hecke_{\sigma_2} \cong (M\otimes N)\otimes_{\hecke_{\lambda_1}}\hecke_{\sigma_1},\]
as $\hecke_{\sigma_1}$-modules, as $\hecke_{\sigma_1}$ only acts on the part induced from $M$. Here we used the fact that $M\otimes N$ is $(\symgp_{\lambda_1}\times\symgp_{\sigma_2})$-projective as $\symgp_{\lambda_2}\subseteq \symgp_{\sigma_2}$.

Furthermore, as a $\hecke_{\lambda_1}$-module, $M\otimes N\cong M^{\oplus \dim N}$, and thus $M$ too is $\symgp_{\lambda_1}$-projective as an $\hecke_{\sigma_1}$-module. So, some conjugate of $\symgp_{\tau_1}$ is contained in $\symgp_{\lambda_1}$ as $M$ has vertex $\symgp_{\tau_1}$. As we already know that a $\symgp_{\sigma_1}$ conjugate of $\symgp_{\lambda_1}$ is contained in $\symgp_{\tau_1}$, we conclude that $\symgp_{\lambda_1}$ is a conjugate of $\symgp_{\tau_1}$. 

Repeating on the other side with $N$, gives us that $\symgp_{\lambda_i}$ is a conjugate of $\symgp_{\tau_i}$ for $i=1,2$, and hence $(\symgp_{\tau_1}\times \symgp_{\tau_2})$ is a vertex of $M\otimes N$ as a $\hecke_{\sigma_1}\otimes\hecke_{\sigma_2}$-module.
\end{proof}

\subsection{Relative projectivity of bimodules}

Let $A, B$ be $F$-algebras with subalgebras $A'\subseteq A$ and $B'\subseteq B$. Then an $(A,B)$-bimodule is the same as a left $A\otimes B^{\text{op}}$-module. Hence we will say that an $(A,B)$-bimodule is relatively $(A', B')$-projective if as a left $A\otimes B^{\text{op}}$-module, $M$ is relatively $A'\otimes (B')^{\text{op}}$-projective. 

Using this, we can extend Higman's criterion and its corollaries to bimodules of Hecke algebras. Let $\sigma_1, \sigma_2 \vDash n$, and denote $\hecku_{\sigma_1, \sigma_2} := \hecke_{\sigma_1}\otimes \hecke_{\sigma_2}^{\text{op}}$, so a $(\hecke_{\sigma_1}, \hecke_{\sigma_2})$-bimodule is the same as a left $\hecku_{\sigma_1, \sigma_2}$-module. Finally use $\underline{T}_{w_1, w_2}$ to denote $T_{w_1}\otimes T_{w_2} \in \hecku_{\sigma_1, \sigma_2}$, for $w_i\in \symgp_{\sigma_i}$. Note that under this notation if we have a $(\hecke_{\lambda_1}, \hecke_{\lambda_2})$-bimodule $M$, then
\[\hecku_{\sigma_1, \sigma_2} \otimes_{\hecku_{\lambda_1, \lambda_2}} M \cong \hecke_{\sigma_1}\otimes_{\hecke_{\lambda_1}} M\otimes_{\hecke_{\lambda_2}} \hecke_{\sigma_2}\]
as $(\hecke_{\sigma_1}, \hecke_{\sigma_2})$-bimodules. This can be seen either using the transitivity of induction, or by the associativity formula given in~\cite[\S 9, Proposition 2.1]{cartan}. This gives a useful result if our bimodule is a block of $\hecke_n$.
\begin{proposition} 
\label{argue_down}
Let $B$ be a direct summand of $\hecke_n$ as a $(\hecke_n, \hecke_n)$-bimodule, which is $(\symgp_{\tau}, \symgp_n)$-projective. Then $B$ is $(\symgp_\tau, \symgp_\tau)$-projective.
\end{proposition}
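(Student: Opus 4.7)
The plan is to apply the transitivity of relative projectivity (Corollary~\ref{transitivity}) to the chain of algebras $\hecku_{\tau,\tau} \subseteq \hecku_{\tau,n} \subseteq \hecku_{n,n}$. We already know that $B$ is $\hecku_{\tau,n}$-projective as an $\hecku_{n,n}$-module by hypothesis, so it suffices to show that $B$ is $\hecku_{\tau,\tau}$-projective as an $\hecku_{\tau,n}$-module, i.e.\ that $B$, viewed as an $(\hecke_\tau,\hecke_n)$-bimodule, is $(\symgp_\tau,\symgp_\tau)$-projective.

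To establish this intermediate projectivity I would first reduce from $B$ to $\hecke_n$. Since $B$ is a direct summand of $\hecke_n$ as an $(\hecke_n,\hecke_n)$-bimodule, it is \emph{a fortiori} a direct summand as an $(\hecke_\tau,\hecke_n)$-bimodule; so it is enough to prove that $\hecke_n$ itself is $(\symgp_\tau,\symgp_\tau)$-projective when considered as an $(\hecke_\tau,\hecke_n)$-bimodule. By Higman's criterion combined with the tensor identity
\[\hecku_{\tau,n}\otimes_{\hecku_{\tau,\tau}}M \;\cong\; M\otimes_{\hecke_\tau}\hecke_n\]
recalled above the statement of the proposition, this amounts to exhibiting $\hecke_n$ as an $(\hecke_\tau,\hecke_n)$-bimodule summand of $\hecke_n\otimes_{\hecke_\tau}\hecke_n$.

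The crux is then the observation that the multiplication map $\mu\colon \hecke_n\otimes_{\hecke_\tau}\hecke_n \to \hecke_n$ is split by $\iota(h)=1\otimes h$. Since any element of $\hecke_\tau$ may be moved across the balanced tensor, $\iota$ is left $\hecke_\tau$-linear, and it is trivially right $\hecke_n$-linear, so $\iota$ is the required $(\hecke_\tau,\hecke_n)$-bimodule splitting. Putting the pieces together, $\hecke_n$ is $\hecku_{\tau,\tau}$-projective as an $\hecku_{\tau,n}$-module, hence so is its summand $B$, and Corollary~\ref{transitivity} upgrades this together with the hypothesis to the desired $(\symgp_\tau,\symgp_\tau)$-projectivity of $B$.

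I do not anticipate a serious obstacle here: the argument is essentially a transitivity-plus-splitting manoeuvre, and the only point that requires care is checking that $\iota$ is linear on the correct sides. This works precisely because the right-hand factor in $\hecke_n\otimes_{\hecke_\tau}\hecke_n$ is being tensored over $\hecke_\tau$ rather than over $\hecke_n$, so that the full right $\hecke_n$-action, not just a $\hecke_\tau$-action, is respected by $\iota$.
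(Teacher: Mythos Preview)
Your argument is correct, but the paper takes a slightly more direct route that avoids the explicit transitivity step. The paper stays entirely at the level of $(\hecke_n,\hecke_n)$-bimodules: from the hypothesis and Higman's criterion it gets $B \mid \hecke_n\otimes_{\hecke_\tau} B \otimes_{\hecke_n}\hecke_n$, then replaces the middle $B$ by $\hecke_n$ (using that $B\mid \hecke_n$ as an $(\hecke_\tau,\hecke_n)$-bimodule) to obtain $B\mid \hecke_n\otimes_{\hecke_\tau}\hecke_n \cong \hecke_n\otimes_{\hecke_\tau}\hecke_\tau\otimes_{\hecke_\tau}\hecke_n$, and Higman's criterion again finishes. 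Your version instead drops to the intermediate algebra $\hecku_{\tau,n}$, proves $\hecke_n$ is $(\symgp_\tau,\symgp_\tau)$-projective there via the explicit splitting $h\mapsto 1\otimes h$, and then invokes Corollary~\ref{transitivity}. Both arguments hinge on the same observation---that $\hecke_n$ splits off $\hecke_n\otimes_{\hecke_\tau}\hecke_n$ once the left action is restricted to $\hecke_\tau$---but the paper's packaging is marginally shorter, while yours makes the role of transitivity and the splitting map more transparent.
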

\begin{proof}
By Higman's criterion, $B \mid \hecke_n\otimes_{\hecke_\tau} B \otimes_{\hecke_n} \hecke_n$. Since $B$ is a direct summand of $\hecke_n$ as a $(\hecke_n, \hecke_n)$-bimodule, it is also a direct summand of $\hecke_n$ as a $(\hecke_\tau, \hecke_n)$-bimodule. Hence,
\[ B\mid \hecke_n \otimes_{\hecke_\tau} \hecke_n \otimes_{\hecke_n} \hecke_n \cong \hecke_n\otimes_{\hecke_\tau}\hecke_n \cong \hecke_n\otimes_{\hecke_\tau} \hecke_\tau \otimes_{\hecke_\tau} \hecke_n,\]
thus by Higman's criterion again, $B$ is $(\symgp_\tau, \symgp_\tau)$-projective.
\end{proof}

As $\hecke_n$ (and thus $\hecke_{\sigma}$ for any $\sigma \vDash n$) has an anti--automorphism given by $T_w \mapsto T_{w^{-1}}$ for $w\in \symgp_n$ (see for instance~\cite[\S 3.2]{mathas}),
\[\hecke_{\sigma_1}\otimes \hecke_{\sigma_2}^{\text{op}} \cong \hecke_{\sigma_1}\otimes \hecke_{\sigma_2} \cong \hecke_\sigma\] 
as $F$-algebras, where $\sigma \vDash 2n$ is given by the concatenation of $\sigma_1$ and $\sigma_2$. Thus we can conclude from~\cite[Theorem 2.29]{jones} a Mackey formula for bimodules.

\begin{theorem}[Mackey formula for bimodules]
\label{Mackey}
For $i=1,2$, let $\symgp_{\lambda_i}, \symgp_{\mu_i}$ be parabolic subgroups of $\symgp_{\sigma_i}$, and denote $\mathscr{D}_i = \mathscr{D}_{\lambda_i,\mu_i}^{\sigma_i}$. Then for any left $\hecku_{\lambda_1,\lambda_2}$-module $N$, we have that as $\hecku_{\mu_1, \mu_2}$-modules:
\[\hecku_{\sigma_1, \sigma_2}\otimes_{\hecku_{\lambda_1,\lambda_2}} N\cong \bigoplus_{d_1\in \mathscr{D}_1,d_2\in \mathscr{D}_2} \hecku_{\mu_1,\mu_2} \otimes_{\hecku_{\nu(d_1),\nu(d_2)}}\left( \underline{T}_{d_1^{-1},d_2}\otimes_{\hecku_{\lambda_1,\lambda_2}}N\right)\]
where $\nu(d_i)\vDash n$ is defined via:
\[\symgp_{\nu(d_i)} = \symgp_{\lambda_i}^{d_i}\cap \symgp_{\mu_i}\] for $i = 1,2$.
\end{theorem}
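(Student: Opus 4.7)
The plan is to leverage the identification $\hecku_{\sigma_1, \sigma_2} \cong \hecke_\sigma$, where $\sigma = (\sigma_1, \sigma_2) \vDash 2n$ is the concatenation and the isomorphism is realised by applying the anti-automorphism $T_w \mapsto T_{w^{-1}}$ on the second factor, as set up in the discussion immediately preceding the theorem. Writing $\lambda = (\lambda_1, \lambda_2)$ and $\mu = (\mu_1, \mu_2)$, the same identification converts a left $\hecku_{\lambda_1, \lambda_2}$-module $N$ into an $\hecke_\lambda$-module, and converts induction from $\hecku_{\lambda_1, \lambda_2}$ to $\hecku_{\sigma_1, \sigma_2}$ into ordinary one-sided induction from $\hecke_\lambda$ to $\hecke_\sigma$. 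So the one-sided Mackey formula~\cite[Theorem 2.29]{jones} applies directly, and the entire content of the proof is to (a) decompose the combinatorial data using the product structure of $\symgp_\sigma$, and (b) track how the anti-automorphism on the second factor transforms the resulting formula.

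First I would verify that $\mathscr{D}^\sigma_{\lambda, \mu} = \mathscr{D}_1 \times \mathscr{D}_2$. Since $\symgp_{\sigma_1}$ and $\symgp_{\sigma_2}$ act on disjoint sets of letters and $\symgp_\lambda, \symgp_\mu$ respect this product structure, for any $(w_1, w_2) \in \symgp_{\sigma_1} \times \symgp_{\sigma_2}$ we have $\ell((w_1, w_2)) = \ell(w_1) + \ell(w_2)$. Thus an element is length-minimal in its $(\symgp_\lambda, \symgp_\mu)$-double coset in $\symgp_\sigma$ iff each coordinate is length-minimal in its own factor, and the claimed product decomposition follows. Correspondingly, $\symgp_\lambda^{(d_1, d_2)} \cap \symgp_\mu = (\symgp_{\lambda_1}^{d_1} \cap \symgp_{\mu_1}) \times (\symgp_{\lambda_2}^{d_2} \cap \symgp_{\mu_2})$, so the stabiliser subalgebra factorises as $\hecke_{\nu(d_1)} \otimes \hecke_{\nu(d_2)}$, which is exactly $\hecku_{\nu(d_1), \nu(d_2)}$ under the identification above.

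Next I would invoke Jones' one-sided Mackey formula on $\hecke_\sigma \otimes_{\hecke_\lambda} N$, giving a $\hecke_\mu$-module decomposition indexed by $d = (d_1, d_2) \in \mathscr{D}_1 \times \mathscr{D}_2$, each summand of the form $\hecke_\mu \otimes_{\hecke_{\nu(d)}} (T_{d^{-1}} \otimes_{\hecke_\lambda} N)$ with $T_{d^{-1}} = T_{d_1^{-1}} \otimes T_{d_2^{-1}}$ in the tensor decomposition. Applying the anti-automorphism on the second tensor factor turns $T_{d_2^{-1}}$ into $T_{d_2}$ (while leaving the first factor alone), producing exactly the element $\underline{T}_{d_1^{-1}, d_2} \in \hecku_{\sigma_1, \sigma_2}$ appearing in the statement. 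The remaining structure transfers formally: induction from $\hecke_{\nu(d_1)} \otimes \hecke_{\nu(d_2)}$ to $\hecke_\mu$ corresponds exactly to induction from $\hecku_{\nu(d_1), \nu(d_2)}$ to $\hecku_{\mu_1, \mu_2}$, and the natural left $\hecku_{\nu(d_1), \nu(d_2)}$-action on $\underline{T}_{d_1^{-1}, d_2} \otimes_{\hecku_{\lambda_1, \lambda_2}} N$ matches the $\hecke_{\nu(d)}$-action of the one-sided formula on $T_{d^{-1}} \otimes_{\hecke_\lambda} N$.

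The main obstacle is this bookkeeping of inverses: the asymmetry between $d_1^{-1}$ and $d_2$ in the final formula arises because the anti-automorphism is applied to exactly one of the two tensor factors, and one has to be careful about which minimal coset representative convention (and which left/right action) is being used on each side at each step. Once the identification is pinned down so that $\underline{T}_{w_1, w_2}$ corresponds to $T_{w_1} \otimes T_{w_2^{-1}}$ in $\hecke_{\sigma_1} \otimes \hecke_{\sigma_2}$, no further computation is required: the Mackey decomposition is literally Jones' formula, read through this dictionary.
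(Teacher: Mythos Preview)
Your proposal is correct and is exactly the argument the paper gives: the text immediately preceding the theorem observes that the anti-automorphism $T_w \mapsto T_{w^{-1}}$ yields $\hecku_{\sigma_1,\sigma_2} \cong \hecke_\sigma$ for the concatenated composition $\sigma$, and then simply says the result follows from~\cite[Theorem 2.29]{jones}. You have in fact supplied more detail than the paper does, in particular the explicit check that $\mathscr{D}^\sigma_{\lambda,\mu} = \mathscr{D}_1 \times \mathscr{D}_2$ and the tracking of the inverse on the second factor that produces the asymmetric $\underline{T}_{d_1^{-1},d_2}$.
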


Note that in this statement $\left( \underline{T}_{d_1^{-1},d_2}\otimes_{\hecku_{\lambda_1,\lambda_2}}N\right)\cong T_{d_1^{-1}} \otimes_{\hecke_{\lambda_1}} N\otimes_{\hecke_{\lambda_2}} T_{d_2}$ is indeed a $(\hecke_{\nu(d_1)}, \hecke_{\nu(d_2)})$-bimodule. To see this, let $w\in \symgp_{\nu(d_1)}$, $n\in N$, then:
\[T_wT_{d_1^{-1}}\otimes n \otimes T_{d_2} = T_{wd_1^{-1}} \otimes n \otimes T_{d_2}\] since $d_1^{-1}$ is a minimal right coset representative for $\symgp_{\mu_1}$. Since, $d_1wd_1^{-1}\in \symgp_{\lambda_1}$, and as $d_1^{-1}$ is a minimal left coset representative for $\symgp_\lambda$, we have that $T_{wd_1^{-1}} = T_{d_1^{-1}d_1wd_1^{-1}} = T_{d_1^{-1}}T_{d_1wd_1^{-1}}$. Thus we can pull $T_{d_1wd_1^{-1}}$ across the tensor product to $N$. Doing something similar on the right confirms our claim.

As before, using again the fact that $\hecke_{\sigma_i}$ possesses an anti--automorphism, as a consequence of~\cite[Theorem 2.31]{jones}, we can define a vertex of a $(\hecke_{\sigma_1}, \hecke_{\sigma_2})$-bimodule.

\begin{theorem}
\label{vertices}
Let $M$ be a $(\hecke_{\sigma_1}, \hecke_{\sigma_2})$-bimodule. Then there exist a pair of parabolic subgroups $\symgp_{\lambda_i}\subseteq \symgp_{\sigma_i}$ for $i=1,2$, such that $M$ is relatively $(\symgp_{\lambda_1},\symgp_{\lambda_2})$-projective and if for any parabolic subgroups $\symgp_{\tau_i}\subseteq \symgp_{\sigma_i}$ with $M$ relatively $(\symgp_{\tau_1},\symgp_{\tau_2})$-projective, then there is $x_i\in \symgp_{\sigma_i}$ with $\symgp_{\lambda_i}^{x_i}\subseteq \symgp_{\tau_i}$, again for $i=1,2$. We call the pair $(\symgp_{\lambda_1}, \symgp_{\lambda_2})$ a \emph{vertex} of $M$ as a $(\hecke_{\sigma_1}, \hecke_{\sigma_2})$-bimodule.
\end{theorem}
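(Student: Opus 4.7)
The plan is to reduce the bimodule statement to the existing vertex theorem for one-sided modules of Hecke algebras, namely~\cite[Theorem 2.31]{jones}, via the identification $\hecku_{\sigma_1,\sigma_2} \cong \hecke_\sigma$ already described in the excerpt.

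First I would regard $M$ as a left $\hecku_{\sigma_1,\sigma_2}$-module and invoke the $F$-algebra isomorphism
\[
\hecku_{\sigma_1,\sigma_2} = \hecke_{\sigma_1} \otimes \hecke_{\sigma_2}^{\text{op}} \;\cong\; \hecke_{\sigma_1} \otimes \hecke_{\sigma_2} \;=\; \hecke_\sigma,
\]
where $\sigma \vDash 2n$ denotes the concatenation of $\sigma_1$ and $\sigma_2$, using the anti-automorphism $T_w \mapsto T_{w^{-1}}$ on $\hecke_{\sigma_2}$. Under this identification, the subalgebra $\hecku_{\lambda_1,\lambda_2}$ (for $\symgp_{\lambda_i}\subseteq\symgp_{\sigma_i}$) corresponds to the parabolic subalgebra $\hecke_{(\lambda_1,\lambda_2)}$ of $\hecke_\sigma$. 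Consequently, relative $(\symgp_{\tau_1},\symgp_{\tau_2})$-projectivity of $M$ as a bimodule is exactly relative $\symgp_{(\tau_1,\tau_2)}$-projectivity of $M$ as an $\hecke_\sigma$-module.

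The main step is to apply Jones's vertex theorem to $M$ viewed as an $\hecke_\sigma$-module. As noted in the preceding text, the results of~\cite{jones} carry across in the standard way to modules over $\hecke_\sigma$, yielding a parabolic $\symgp_\lambda \subseteq \symgp_\sigma$ which is a vertex and is unique up to conjugation in $\symgp_\sigma$. Since $\symgp_\sigma = \symgp_{\sigma_1}\times\symgp_{\sigma_2}$, any composition $\lambda$ whose parabolic sits inside $\symgp_\sigma$ must refine $\sigma$, and therefore splits as a concatenation $\lambda = (\lambda_1,\lambda_2)$ with $\symgp_{\lambda_i}\subseteq\symgp_{\sigma_i}$; in particular $\symgp_\lambda = \symgp_{\lambda_1} \times \symgp_{\lambda_2}$, giving the required pair.

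For the minimality clause, any conjugating element $x \in \symgp_\sigma$ provided by the vertex property decomposes as $x = (x_1,x_2)$ with $x_i \in \symgp_{\sigma_i}$, and the containment $\symgp_\lambda^x \subseteq \symgp_{(\tau_1,\tau_2)}$ is then equivalent to $\symgp_{\lambda_i}^{x_i} \subseteq \symgp_{\tau_i}$ for $i=1,2$, which is exactly what the statement demands. I do not anticipate a serious obstacle beyond two bookkeeping points that should be checked explicitly: first, that the anti-automorphism identification really does match relative projectivities on both sides (one has to track that $\hecke_{\sigma_2}$ acts on opposite sides before and after the identification, and that the anti-automorphism fixes every parabolic subalgebra $\hecke_{\lambda_2}$); second, that every parabolic subgroup of $\symgp_{\sigma_1}\times\symgp_{\sigma_2}$ genuinely decomposes as a direct product of parabolics, so that the vertex produced by Jones's theorem has the required block form.
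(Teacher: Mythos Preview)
Your proposal is correct and is precisely the argument the paper has in mind: the paper does not give a separate proof of this theorem but simply states, in the sentence immediately preceding it, that it follows ``as a consequence of~\cite[Theorem 2.31]{jones}'' via the anti-automorphism identification $\hecku_{\sigma_1,\sigma_2}\cong\hecke_\sigma$. Your write-up just unpacks that sentence, including the two bookkeeping checks (that the anti-automorphism preserves each parabolic subalgebra, and that parabolics and conjugating elements in $\symgp_{\sigma_1}\times\symgp_{\sigma_2}$ split componentwise), which the paper leaves implicit.
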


Using this definition, we get the following consequences of~\cite[Lemma 3.2]{du}.

\begin{lemma}\label{3.2i} \label{3.2ii}
Let $M$ be an indecomposable $(\hecke_{\sigma_1},\hecke_{\sigma_2})$-bimodule with vertex $(\symgp_{\tau_1},\symgp_{\tau_2})$ for $\tau_1, \tau_2\vDash n$, and let $\lambda_1, \lambda_2\vDash n$ with $\symgp_{\tau_i}\subseteq\symgp_{\lambda_i}\subseteq\symgp_{\sigma_i}$ for $i=1,2$. Then there are indecomposable $(\hecke_{\lambda_1},\hecke_{\lambda_2})$-bimodules $P$ and $Q$, both with vertex $(\symgp_{\tau_1}, \symgp_{\tau_2})$ such that:
\begin{enumerate}[(a)]
\item $P\mid M$ as $(\hecke_{\lambda_1}, \hecke_{\lambda_2})$-bimodules,
\item $M\mid\hecku_{\sigma_1, \sigma_2}\otimes_{\hecku_{\lambda_1,\lambda_2}} Q$.
\end{enumerate}
\end{lemma}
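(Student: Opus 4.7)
The plan is to mirror Du's proof of \cite[Lemma 3.2]{du} in the bimodule setting, since every tool used there has been lifted to $\hecku_{\sigma_1,\sigma_2}$-modules earlier in this section: Higman's criterion (Theorem~\ref{higman}), the Mackey formula (Theorem~\ref{Mackey}), the vertex formalism (Theorem~\ref{vertices}), and Corollary~\ref{transitivity} on transitivity of relative projectivity. Throughout, Krull--Schmidt applies since $\hecku_{\sigma_1,\sigma_2}$ is a finite-dimensional $F$-algebra.

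For (b), since $M$ has vertex $(\symgp_{\tau_1},\symgp_{\tau_2})$, Higman gives $M\mid\hecku_{\sigma_1,\sigma_2}\otimes_{\hecku_{\tau_1,\tau_2}}M$. Using the associativity of induction noted just before Proposition~\ref{argue_down}, I would factor this through $\hecku_{\lambda_1,\lambda_2}$, rewriting it as $\hecku_{\sigma_1,\sigma_2}\otimes_{\hecku_{\lambda_1,\lambda_2}}V$ with $V:=\hecku_{\lambda_1,\lambda_2}\otimes_{\hecku_{\tau_1,\tau_2}}M$. Decompose $V=\bigoplus_j Q_j$ into indecomposable $(\hecke_{\lambda_1},\hecke_{\lambda_2})$-bimodules; Krull--Schmidt forces $M\mid\hecku_{\sigma_1,\sigma_2}\otimes_{\hecku_{\lambda_1,\lambda_2}}Q$ for some $Q:=Q_j$. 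Each $Q_j$ is $(\symgp_{\tau_1},\symgp_{\tau_2})$-projective because $V$ is (by Higman), so the vertex of $Q$ is contained, up to $(\symgp_{\lambda_1}\times\symgp_{\lambda_2})$-conjugacy, in $(\symgp_{\tau_1},\symgp_{\tau_2})$. If the containment were strict, transitivity would make $M$ projective relative to a smaller subgroup, contradicting the vertex of $M$; hence $Q$ has vertex exactly $(\symgp_{\tau_1},\symgp_{\tau_2})$.

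For (a), I would restrict $M$ to $\hecku_{\lambda_1,\lambda_2}$ and decompose $M{\downarrow}_{\hecku_{\lambda_1,\lambda_2}}=\bigoplus_k P_k$ into indecomposables. Since $(\symgp_{\tau_1},\symgp_{\tau_2})\subseteq(\symgp_{\lambda_1},\symgp_{\lambda_2})$, $M$ is $(\symgp_{\lambda_1},\symgp_{\lambda_2})$-projective, so Higman gives $M\mid\hecku_{\sigma_1,\sigma_2}\otimes_{\hecku_{\lambda_1,\lambda_2}}M{\downarrow}_{\hecku_{\lambda_1,\lambda_2}}=\bigoplus_k\hecku_{\sigma_1,\sigma_2}\otimes_{\hecku_{\lambda_1,\lambda_2}}P_k$. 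By Krull--Schmidt there is some $k$ with $M\mid\hecku_{\sigma_1,\sigma_2}\otimes_{\hecku_{\lambda_1,\lambda_2}}P_k$; set $P:=P_k$. Each $P_k$, as a summand of $M{\downarrow}_{\hecku_{\lambda_1,\lambda_2}}$, is $(\symgp_{\tau_1},\symgp_{\tau_2})$-projective because $M$ itself is. The same contradiction argument as in (b), applied now to $P$ in place of $Q$, shows that the vertex of this distinguished $P$ cannot be strictly smaller than $(\symgp_{\tau_1},\symgp_{\tau_2})$, and the upper bound just noted then pins it down exactly.

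The one subtlety to watch is the conjugacy class in which the vertex is defined. Vertices of $(\hecke_{\lambda_1},\hecke_{\lambda_2})$-bimodules are well-defined only up to $(\symgp_{\lambda_1}\times\symgp_{\lambda_2})$-conjugacy, whereas the vertex of $M$ is defined up to $(\symgp_{\sigma_1}\times\symgp_{\sigma_2})$-conjugacy. Because every vertex of $P$ or $Q$ lives inside $\symgp_{\lambda_1}\times\symgp_{\lambda_2}$ by construction, any $\symgp_{\lambda_i}$-conjugate of it is automatically a $\symgp_{\sigma_i}$-conjugate; combined with $(\symgp_{\tau_1},\symgp_{\tau_2})\subseteq(\symgp_{\lambda_1},\symgp_{\lambda_2})$, this makes the vertex-comparison arguments compatible on both levels. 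This is the only real bookkeeping issue in the argument.
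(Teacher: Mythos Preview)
Your argument is correct and is exactly what the paper intends: the paper does not give a proof but simply records the lemma as a consequence of \cite[Lemma~3.2]{du} via the identification $\hecku_{\sigma_1,\sigma_2}\cong\hecke_\sigma$ for suitable $\sigma\vDash 2n$, and what you have written is precisely the bimodule transcription of Du's proof. One small tightening in part~(a): the claim that each $P_k$ is $(\symgp_{\tau_1},\symgp_{\tau_2})$-projective does not follow just from $M$ being so over $\hecku_{\sigma_1,\sigma_2}$---you need the Mackey formula (Theorem~\ref{Mackey}) to bound the vertex of each $P_k$ inside some $(\symgp_{\tau_1}^{d_1}\cap\symgp_{\lambda_1},\symgp_{\tau_2}^{d_2}\cap\symgp_{\lambda_2})$, after which your size/containment argument and the conjugacy bookkeeping you already flagged finish the job.
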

Note that in this situation, $Q$ corresponds to the notion of a \emph{source} for $M$ (see for example~\cite[\S9]{alperin}). The final lemma we state in this section is a consequence of~\cite[Lemma 3.3]{du} using Theorem~\ref{Mackey}.
\begin{lemma}
\label{3.3}
Let $\tau_i$, $\lambda_i$, $\sigma_i$ be as in Lemma~\ref{3.2i} for $i=1,2$. If $N$ is a $(\symgp_{\tau_1},\symgp_{\tau_2})$-projective $(\hecke_{\lambda_1},\hecke_{\lambda_2})$-bimodule, then we get as $(\hecke_{\lambda_1}, \hecke_{\lambda_2})$-bimodules:
\[\hecku_{\sigma_1, \sigma_2}\otimes_{\hecku_{\lambda_1,\lambda_2}} N \cong N \oplus Y,\]
where each indecomposable summand of $Y$ has a vertex contained in:
\[(\symgp_{\tau_1}^{d_1}\cap \symgp_{\lambda_1},\symgp_{\tau_2}^{d_2}\cap \symgp_{\lambda_2})\] for some $d_i\in \mathscr{D}_{\tau_i,\lambda_i}^{\sigma_i}$ with $(d_1,d_2)\neq (1,1)$.
\end{lemma}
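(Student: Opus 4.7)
My plan is to combine Theorem~\ref{Mackey} with Higman's criterion (Theorem~\ref{higman}) applied twice. First, I would apply Theorem~\ref{Mackey} directly to $\hecku_{\sigma_1, \sigma_2} \otimes_{\hecku_{\lambda_1, \lambda_2}} N$ with $\mu_i = \lambda_i$, yielding a Mackey decomposition indexed by $(d_1, d_2) \in \mathscr{D}_{\lambda_1, \lambda_1}^{\sigma_1} \times \mathscr{D}_{\lambda_2, \lambda_2}^{\sigma_2}$. The $(d_1, d_2) = (1, 1)$ summand collapses to $\hecku_{\lambda_1, \lambda_2} \otimes_{\hecku_{\lambda_1, \lambda_2}} N \cong N$, giving the $N$ direct summand of the claim; I set $Y$ to be the complementary sum of $(d_1, d_2) \neq (1, 1)$ Mackey terms.

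For each $(d_1, d_2) \neq (1, 1)$, the corresponding Mackey summand is $\hecku_{\lambda_1, \lambda_2} \otimes_{\hecku_{\nu(d_1), \nu(d_2)}} N^{(d_1, d_2)}$ where $N^{(d_1, d_2)} := \underline{T}_{d_1^{-1}, d_2} \otimes_{\hecku_{\lambda_1, \lambda_2}} N$ and $\symgp_{\nu(d_i)} = \symgp_{\lambda_i}^{d_i} \cap \symgp_{\lambda_i}$. To bound the vertices of the indecomposable summands of this induced module, I would use the $(\symgp_{\tau_1}, \symgp_{\tau_2})$-projectivity of $N$: the conjugation map $\hecke_{\nu(d_i)} \to \hecke_{d_i \nu(d_i) d_i^{-1}} \subseteq \hecke_{\lambda_i}$, $T_w \mapsto T_{d_i w d_i^{-1}}$ (whose well-definedness relies on the reduced-expression identities $T_w T_{d_i^{-1}} = T_{d_i^{-1}} T_{d_i w d_i^{-1}}$ for $w \in \symgp_{\nu(d_i)}$, as discussed immediately after Theorem~\ref{Mackey}) identifies $N^{(d_1, d_2)}$ with the restriction of $N$ to $\hecku_{d_1 \nu(d_1) d_1^{-1}, d_2 \nu(d_2) d_2^{-1}}$. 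The $(\symgp_{\tau_1}, \symgp_{\tau_2})$-projectivity of $N$ then transfers through this restriction (via another application of Theorem~\ref{Mackey}) to give that $N^{(d_1, d_2)}$ is $(\symgp_{\tau_1}^{d_1} \cap \symgp_{\lambda_1}, \symgp_{\tau_2}^{d_2} \cap \symgp_{\lambda_2})$-projective as a $\hecku_{\nu(d_1), \nu(d_2)}$-module, and Corollary~\ref{transitivity} then propagates this projectivity to the full induced Mackey summand.

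A final bookkeeping step converts the indexing from $\mathscr{D}_{\lambda_i, \lambda_i}^{\sigma_i}$ to $\mathscr{D}_{\tau_i, \lambda_i}^{\sigma_i}$: since $\symgp_{\tau_i} \subseteq \symgp_{\lambda_i}$, each $d_i \in \mathscr{D}_{\lambda_i, \lambda_i}^{\sigma_i}$ factors as $d_i = t_i d_i' l_i$ with $t_i \in \symgp_{\tau_i}$, $d_i' \in \mathscr{D}_{\tau_i, \lambda_i}^{\sigma_i}$ and $l_i \in \symgp_{\lambda_i}$; a direct calculation using $\symgp_{\lambda_i}^{l_i} = \symgp_{\lambda_i}$ then gives $\symgp_{\tau_i}^{d_i} \cap \symgp_{\lambda_i} = (\symgp_{\tau_i}^{d_i'} \cap \symgp_{\lambda_i})^{l_i}$, while minimality of double coset representatives forces $d_i = 1 \iff d_i' = 1$, so the non-triviality condition carries over. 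The main obstacle is the relative-projectivity claim for the twisted module $N^{(d_1, d_2)}$ in the second paragraph: this is the Hecke-algebra analogue of the twisted-module argument in~\cite[Lemma 3.3]{du}, and its verification amounts to re-proving, in the bimodule setting, that conjugation by $d_i$ intertwines the $(\symgp_{\tau_i}, \symgp_{\tau_i})$-projectivity of $N$ with the required $(\symgp_{\tau_1}^{d_1} \cap \symgp_{\lambda_1}, \symgp_{\tau_2}^{d_2} \cap \symgp_{\lambda_2})$-projectivity of $N^{(d_1, d_2)}$, with the length-additivity hypotheses on reduced expressions being the technically delicate input.
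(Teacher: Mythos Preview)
Your outline follows the same route as the paper, which simply cites \cite[Lemma~3.3]{du} together with the bimodule Mackey formula (Theorem~\ref{Mackey}): split off the $N$-summand via the $(1,1)$ term of Mackey with $\mu_i=\lambda_i$, then use the $(\symgp_{\tau_1},\symgp_{\tau_2})$-projectivity of $N$ to refine the vertex bound on the remaining terms.

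There is, however, a genuine imprecision in your second paragraph. A second application of Theorem~\ref{Mackey} to the restriction of the $(\symgp_{\tau_1},\symgp_{\tau_2})$-projective module $N$ down to $\hecku_{d_1\nu(d_1)d_1^{-1},\,d_2\nu(d_2)d_2^{-1}}$ does \emph{not} produce a single relative-projectivity class: it decomposes into pieces that are $(\symgp_{\tau_i}^{e_i}\cap d_i\symgp_{\nu(d_i)}d_i^{-1})$-projective for varying inner representatives $e_i\in\mathscr{D}_{\tau_i,\,d_i\nu(d_i)d_i^{-1}}^{\lambda_i}$. After pulling back through the conjugation isomorphism, each indecomposable summand of $N^{(d_1,d_2)}$ is therefore $(\symgp_{\tau_i}^{e_id_i}\cap\symgp_{\lambda_i})$-projective, not $(\symgp_{\tau_i}^{d_i}\cap\symgp_{\lambda_i})$-projective as you assert. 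Consequently your bookkeeping step must convert the \emph{product} $e_id_i$ (with $e_i\in\symgp_{\lambda_i}$ and $d_i\in\mathscr{D}_{\lambda_i,\lambda_i}^{\sigma_i}$), not $d_i$ alone, to an element of $\mathscr{D}_{\tau_i,\lambda_i}^{\sigma_i}$. This repair goes through: $e_id_i\in\symgp_{\lambda_i}\iff d_i\in\symgp_{\lambda_i}\iff d_i=1$, so the non-triviality condition $(d_1,d_2)\neq(1,1)$ still propagates to $(d_1',d_2')\neq(1,1)$. But your write-up as it stands omits this extra layer of coset representatives, and the single-group claim in paragraph two is false in general.
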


\subsection{A Green correspondence for bimodules}

In this section, we hope to achieve a Green correspondence for our bimodules, as in~\cite[\S 3]{du}, or as done in~\cite[\S 11]{alperin} for finite groups. Let us fix some notation. Let $\lambda_i, \mu_i, \sigma_i$ be compositions of $n$ for $i=1,2$, with: 
\begin{equation}
\label{key_green}
\symgp_{\lambda_i}\subseteq N_{\symgp_{\sigma_i}}(\symgp_{\lambda_i})\subseteq \symgp_{\mu_i}\subseteq \symgp_{\sigma_i}.\end{equation}
Denote the following set:
\[\mathscr{P} = \{(H_1, H_2): H_i\text{ is a parabolic subgroup of }\symgp_{\sigma_i}\text{ for }i=1,2\}.\] 
For any subset $\mathscr{J}\subseteq \mathscr{P}$, we say a $(\hecke_{\sigma_1}, \hecke_{\sigma_2})$-bimodule is \emph{relatively $\mathscr{J}$-projective} (or just $\mathscr{J}$-projective), if each of its indecomposable summands is projective for some pair of parabolic subgroups in $\mathscr{J}$. Let $(P_1, P_2), (G_1, G_2)\in \mathscr{P}$. Then say that $(P_1,P_2)\in_{G_1,G_2}\mathscr{J}$ if there are elements $x_i\in G_i$ with $(P_1^{x_1},P_2^{x_2})\in \mathscr{J}$. Now we are ready to define the sets used in our version of the Green correspondence. 
\[\setx = \{ (H_1, H_2)\in \mathscr{P} : H_i\subseteq \symgp_{\lambda_i}^{d_i}\cap \symgp_{\lambda_i}\text{ for }(d_1, d_2)\in (\symgp_{\sigma_1},\symgp_{\sigma_2}) - (\symgp_{\mu_1}, \symgp_{\mu_2})\}\]
\[\sety = \{ (H_1, H_2)\in \mathscr{P} : H_i\subseteq \symgp_{\lambda_i}^{d_i}\cap \symgp_{\mu_i}\text{ for }(d_1, d_2)\in (\symgp_{\sigma_1},\symgp_{\sigma_2}) - (\symgp_{\mu_1}, \symgp_{\mu_2})\}\]
\[\setz = \{H = (H_1,H_2)\in \mathscr{P}: H_1\subseteq \symgp_{\lambda_1}, H_2 \subseteq \symgp_{\lambda_2}, H\notin_{\symgp_{\sigma_1},\symgp_{\sigma_2}} \setx\}.\]
Note that in the definitions of $\setx$ and $\sety$, we require that $d = (d_1,d_2)$ cannot have both $d_1\in \symgp_{\mu_1}$ and $d_2 \in \symgp_{\mu_2}$, but for example we could have $d_1 \in \symgp_{\mu_1}$ as long as $d_2\notin \symgp_{\mu_2}$. This follows from Lemma~\ref{3.3}, where at most one of the $d_i$ in that formula can be the identity. These sets are bimodule analogues of the sets used in both~\cite[\S 11]{alperin} and~\cite[\S 3]{du}. As in both the classical Green correspondence and in~\cite[Theorem 3.6]{du}, we have the following conditions linking our sets.
\begin{lemma}
\label{3.4II}
If $\symgp_{\tau_i}\subseteq \symgp_{\lambda_i}$ are parabolic subgroups for $i = 1,2$, then the following are equivalent:
\begin{enumerate}[(a)]
\item $(\symgp_{\tau_1},\symgp_{\tau_2})\in_{(\symgp_{\sigma_1},\symgp_{\sigma_2})} \underline{\mathscr{X}}^2$
\item $(\symgp_{\tau_1},\symgp_{\tau_2})\in \underline{\mathscr{X}}^2$
\item $(\symgp_{\tau_1},\symgp_{\tau_2})\in \underline{\mathscr{Y}}^2$
\item $(\symgp_{\tau_1},\symgp_{\tau_2})\in_{(\symgp_{\mu_1},\symgp_{\mu_2})} \underline{\mathscr{Y}}^2$
\end{enumerate}
\end{lemma}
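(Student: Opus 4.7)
The plan is to split the chain of equivalences into four implications arranged around a cycle. Two of them, namely $(b)\Rightarrow(a)$ and $(c)\Rightarrow(d)$, are immediate: simply take $x_1=x_2=1$. The two-way equivalence $(b)\Leftrightarrow(c)$ is also direct: the inclusion $\symgp_{\lambda_i}\subseteq \symgp_{\mu_i}$ from (\ref{key_green}) upgrades any witness $\symgp_{\tau_i}\subseteq \symgp_{\lambda_i}^{d_i}\cap\symgp_{\lambda_i}$ to $\symgp_{\tau_i}\subseteq \symgp_{\lambda_i}^{d_i}\cap\symgp_{\mu_i}$ with the same $(d_1,d_2)$, while the standing hypothesis $\symgp_{\tau_i}\subseteq \symgp_{\lambda_i}$ supplies the converse inclusion needed to go back. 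This reduces the proof to the two substantive upgrades $(a)\Rightarrow(b)$ and $(d)\Rightarrow(c)$, where the conjugating elements $x_i$ must be removed.

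For $(a)\Rightarrow(b)$, suppose $x_i\in\symgp_{\sigma_i}$ and $(d_1,d_2)$ (with not both $d_i\in\symgp_{\mu_i}$) satisfy $\symgp_{\tau_i}^{x_i}\subseteq \symgp_{\lambda_i}^{d_i}\cap\symgp_{\lambda_i}$ for $i=1,2$. Transporting the two inclusions by $x_i^{-1}$ produces $\symgp_{\tau_i}\subseteq \symgp_{\lambda_i}^{d_ix_i^{-1}}$ and $\symgp_{\tau_i}\subseteq \symgp_{\lambda_i}^{x_i^{-1}}$; combining each with the standing assumption $\symgp_{\tau_i}\subseteq\symgp_{\lambda_i}$ shows that both $d_i'=d_ix_i^{-1}$ and $d_i'=x_i^{-1}$ serve as candidate witnesses in the $i$-th coordinate. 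It then suffices to choose $d_1',d_2'$ from their respective two-element candidate sets so that at least one lies outside $\symgp_{\mu_i}$. Without loss of generality $d_1\notin\symgp_{\mu_1}$: if $x_1\in\symgp_{\mu_1}$ take $d_1'=d_1x_1^{-1}\notin\symgp_{\mu_1}$, and if $x_1\notin\symgp_{\mu_1}$ take $d_1'=x_1^{-1}\notin\symgp_{\mu_1}$. Either choice of $d_2'$ then completes a witness for $(b)$.

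The implication $(d)\Rightarrow(c)$ follows the same template but is easier: now $x_i\in\symgp_{\mu_i}$, so conjugating $\symgp_{\tau_i}^{x_i}\subseteq \symgp_{\lambda_i}^{d_i}\cap\symgp_{\mu_i}$ by $x_i^{-1}$ automatically yields $\symgp_{\tau_i}\subseteq \symgp_{\lambda_i}^{d_ix_i^{-1}}\cap\symgp_{\mu_i}$ (the $\symgp_{\mu_i}$-factor is $x_i^{-1}$-invariant), and $d_ix_i^{-1}\notin\symgp_{\mu_i}$ whenever $d_i\notin\symgp_{\mu_i}$. So the out-of-$\symgp_{\mu_i}$ witness transfers directly, without the need to juggle two candidates.

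The main obstacle is the careful bookkeeping in $(a)\Rightarrow(b)$: unlike the $\sety$ case, the set $\setx$ uses $\symgp_{\lambda_i}$ on \emph{both} sides of the intersection, so one must realise that the second factor $\symgp_{\lambda_i}$ can be recovered from the standing hypothesis $\symgp_{\tau_i}\subseteq\symgp_{\lambda_i}$, and then perform the case split on the location of $x_i$ relative to $\symgp_{\mu_i}$ to pick the correct witness from $\{d_ix_i^{-1},x_i^{-1}\}$ in the coordinate(s) where the original $d_i$ fell outside $\symgp_{\mu_i}$.
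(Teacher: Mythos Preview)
Your proof is correct. The paper itself does not give a detailed argument: it simply remarks that the lemma follows from \cite[Lemma~3.4]{du}, or alternatively from the observation that, under the standing hypothesis $\symgp_{\tau_i}\subseteq\symgp_{\lambda_i}$, one has $(\symgp_{\tau_1},\symgp_{\tau_2})\in\setx$ if and only if at least one $\symgp_{\tau_i}$ lies in the corresponding one-variable set $\mathscr{X}$ from Du's paper (and similarly for $\sety$). With that reduction in hand, the four-way equivalence is just Du's one-variable lemma applied coordinatewise.

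Your argument instead proves the two-variable statement directly, without the reduction step and without invoking Du. In effect you have unpacked and reproved Du's Lemma~3.4 in the bimodule setting: the case split in your $(a)\Rightarrow(b)$ on whether $x_1\in\symgp_{\mu_1}$ is exactly the crux of the one-variable argument, and your treatment of the second coordinate (``either choice of $d_2'$ works'') is precisely where the paper's disjunction observation would kick in. The advantage of your route is that it is fully self-contained; the paper's route is shorter because it recycles the module-level result already available in the literature.
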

Again, this follows as a consequence of~\cite[Lemma 3.4]{du}. Alternatively, it can be seen by the fact that $(H_1,H_2)\in \setx$ if and only if one of $H_1$ or $H_2$ lies in the corresponding set $\mathscr{X}$ from~\cite[\S3]{du}. We now need the following corollary, which corresponds to~\cite[Corollary 3.5]{du}.
\begin{corollary}
\label{3.5}
If $M$ is a $\setx$-projective $(\hecke_{\sigma_1}, \hecke_{\sigma_2})$-bimodule, then as a $(\hecke_{\mu_1}, \hecke_{\mu_2})$-bimodule, $M$ is $\sety$-projective.
\end{corollary}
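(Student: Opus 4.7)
The plan is to reduce to the indecomposable case and then decompose via the bimodule Mackey formula (Theorem~\ref{Mackey}). Since a direct summand of a relatively projective module is still relatively projective (by Higman's criterion~(c) applied to any summand), we may assume that $M$ is indecomposable and $(\symgp_{\tau_1}, \symgp_{\tau_2})$-projective for some fixed $(\symgp_{\tau_1}, \symgp_{\tau_2}) \in \setx$. The definition of $\setx$ forces $\symgp_{\tau_i} \subseteq \symgp_{\lambda_i}$, and since $\symgp_{\lambda_i} \subseteq \symgp_{\mu_i}$ by~\eqref{key_green} we have $\hecke_{\tau_i} \subseteq \hecke_{\mu_i}$. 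Higman's criterion then yields $M \mid \hecku_{\sigma_1, \sigma_2} \otimes_{\hecku_{\tau_1, \tau_2}} M$, so it suffices to show that the right-hand side is $\sety$-projective as a $\hecku_{\mu_1, \mu_2}$-module.

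Restricting to $\hecku_{\mu_1, \mu_2}$ and applying Theorem~\ref{Mackey} with $\lambda_i$ replaced by $\tau_i$ gives a decomposition indexed by pairs $(e_1, e_2) \in \mathscr{D}_{\tau_1, \mu_1}^{\sigma_1} \times \mathscr{D}_{\tau_2, \mu_2}^{\sigma_2}$, with each summand induced from $(\symgp_{\nu(e_1)}, \symgp_{\nu(e_2)})$ where $\symgp_{\nu(e_i)} = \symgp_{\tau_i}^{e_i} \cap \symgp_{\mu_i}$. I would then split into two cases. If $(e_1, e_2) \notin (\symgp_{\mu_1}, \symgp_{\mu_2})$, then conjugating the inclusion $\symgp_{\tau_i} \subseteq \symgp_{\lambda_i}$ yields $\symgp_{\nu(e_i)} \subseteq \symgp_{\lambda_i}^{e_i} \cap \symgp_{\mu_i}$, which places $(\symgp_{\nu(e_1)}, \symgp_{\nu(e_2)})$ into $\sety$ directly from the definition, so this summand is $\sety$-projective. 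If instead both $e_1 \in \symgp_{\mu_1}$ and $e_2 \in \symgp_{\mu_2}$, then since $\symgp_{\tau_i} \subseteq \symgp_{\mu_i}$ the double coset $\symgp_{\tau_i} e_i \symgp_{\mu_i}$ collapses to $\symgp_{\mu_i}$, whose minimal element is the identity; minimality of $e_i$ then forces $e_1 = e_2 = 1$. The resulting summand is $\hecku_{\mu_1, \mu_2} \otimes_{\hecku_{\tau_1, \tau_2}} M$, which is manifestly $(\symgp_{\tau_1}, \symgp_{\tau_2})$-projective over $\hecku_{\mu_1, \mu_2}$, and Lemma~\ref{3.4II} transfers the hypothesis $(\symgp_{\tau_1}, \symgp_{\tau_2}) \in \setx$ to $(\symgp_{\tau_1}, \symgp_{\tau_2}) \in \sety$, as required.

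The only mildly subtle point is this ``diagonal'' case $(e_1, e_2) = (1, 1)$: it sits outside the class of pairs used to define $\sety$ (since $(1,1) \in (\symgp_{\mu_1}, \symgp_{\mu_2})$) and so must be absorbed separately, which is exactly where the $\setx$-hypothesis is consumed through Lemma~\ref{3.4II}. Once this observation is made, the remainder is just a bookkeeping of the Mackey decomposition, and I do not anticipate any further obstacles.
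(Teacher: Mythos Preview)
Your proposal is correct and follows essentially the same route as the paper's own proof: reduce to an indecomposable summand, apply Higman's criterion and the bimodule Mackey formula (Theorem~\ref{Mackey}), and then sort the resulting summands according to whether the double coset representatives lie in $(\symgp_{\mu_1},\symgp_{\mu_2})$ or not, invoking Lemma~\ref{3.4II} for the diagonal term. The only cosmetic difference is that the paper separates the case where exactly one $e_i$ equals $1$ from the case where both are $\neq 1$, whereas you merge these into a single ``$(e_1,e_2)\notin(\symgp_{\mu_1},\symgp_{\mu_2})$'' case; your version is slightly more economical but otherwise identical in content.
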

\begin{proof}
Let $L$ be an indecomposable summand of $M$ as a $(\hecke_{\sigma_1},\hecke_{\sigma_2})$-bimodule, with vertex $(\symgp_{\tau_1},\symgp_{\tau_2})\in\setx$. Thus $L\mid \hecku_{\sigma_1, \sigma_2} \otimes_{\hecke_{\tau_1,\tau_2}} L$, and applying our Mackey Formula says that as a $\hecku_{\mu_1,\mu_2}$-module, each indecomposable summand of $L$ is $(\symgp_{\gamma_1}, \symgp_{\gamma_2})$-projective, where $\symgp_{\gamma_i}\subseteq \symgp_{\tau}^{d_i}\cap \symgp_{\mu_i}$ for some $d_i\in \mathscr{D}_{\tau_i, \mu_i}^{\sigma_i}$.

If both $d_i \neq 1$, then for $i = 1,2$:
\[\symgp_{\gamma_i}\subseteq \symgp_{\tau_i}^{d_i}\cap \symgp_{\mu_i} \subseteq \symgp_{\lambda_i}^{d_i}\cap \symgp_{\mu_i},\]
and thus $\symgp_{\gamma_i}\in \mathscr{Y}_i$ as $d_i\notin \symgp_{\mu_i}$. Thus if both $d_i \neq 1$, $(\symgp_{\gamma_1}, \symgp_{\gamma_2})\in \sety$.

If without loss of generality, $d_1 = 1$ and $d_2\neq 1$, then:
\[\symgp_{\gamma_1}\subseteq \symgp_{\tau_1}\cap \symgp_{\mu_1}\subseteq \symgp_{\tau_1}\subseteq \symgp_{\lambda_1} = \symgp_{\lambda_1}^1 \cap \symgp_{\mu_1}.\]
Thus as $(d_1,d_2)\notin (\symgp_{\mu_1},\symgp_{\mu_2})$, by the previous argument for $d_2$, then $(\symgp_{\gamma_1}, \symgp_{\gamma_2})\in \sety$ by definition.

Finally if both $d_1 = d_2 = 1$, then we have $(\symgp_{\gamma_1}, \symgp_{\gamma_2})\in\setx$, as each $\symgp_{\gamma_i}\subseteq \symgp_{\tau_i}$ and $(\symgp_{\tau_1}, \symgp_{\tau_2})\in \setx$. As $\symgp_{\gamma_i}\subset \symgp_{\lambda_i}$, we can conclude with Lemma~\ref{3.4II} that $(\symgp_{\gamma_1}, \symgp_{\gamma_2})\in \sety$.

Thus in all cases indecomposable summands of $L$ are relatively $\sety$-projective as $(\hecke_{\mu_1}, \hecke_{\mu_2})$-bimodules, and hence so is $M$.
\end{proof}
We can now fully state our Green correspondence for bimodules of Hecke algebras, generalising~\cite[Theorem 3.6]{du}.
\begin{theorem}[Green correspondence for bimodules]
\label{green}
We have the following correspondence:
\begin{enumerate}[(a)]
\item Let $M$ be an indecomposable $(\hecke_{\sigma_1},\hecke_{\sigma_2})$-bimodule with vertex $(\symgp_{\tau_1},\symgp_{\tau_2})\in \setz$. Then there is a unique indecomposable summand $f(M)$ of $M$ as an $(\hecke_{\mu_1}, \hecke_{\mu_2})$-bimodule, with vertex $(\symgp_{\tau_1},\symgp_{\tau_2})$, and
\[M \cong f(M) \oplus Y\]
as $(\hecke_{\mu_1}, \hecke_{\mu_2})$-bimodules, where each indecomposable summand of $Y$ has a vertex in $\sety$.
\item Let $N$ be an indecomposable $(\hecke_{\mu_1},\hecke_{\mu_2})$-bimodule with vertex $(\symgp_{\tau_1},\symgp_{\tau_2})\in\setz$. Then there is a unique indecomposable summand $g(N)$ of $\hecku_{\sigma_1, \sigma_2}\otimes_{\hecku_{\mu_1,\mu_2}} N$, with vertex $(\symgp_{\tau_1},\symgp_{\tau_2})$ and
\[\hecku\otimes_{\hecku_{\mu_1,\mu_2}}N \cong g(N) \oplus X\]
where each indecomposable summand of $X$ has a vertex in $\setx$.
\item Furthermore for $M$ and $N$ as described above, $f(g(N))\cong N$, and $g(f(M))\cong M$. 
\end{enumerate}
Hence this gives a one-to-one correspondence between $(\hecke_{\sigma_1},\hecke_{\sigma_2})$-bimodules, and $(\hecke_{\mu_1}, \hecke_{\mu_2})$-bimodules which have vertices in $\setz$.
\end{theorem}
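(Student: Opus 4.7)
The plan is to adapt the argument of~\cite[Theorem 3.6]{du} to the bimodule setting, using the Mackey formula (Theorem~\ref{Mackey}), Higman's criterion (Theorem~\ref{higman}), the bimodule versions of Du's decomposition lemmas (Lemmas~\ref{3.2i} and~\ref{3.3}), and Corollary~\ref{3.5}. The normalizer hypothesis (\ref{key_green}) enters through Lemma~\ref{3.4II}, which guarantees that subgroups of $(\symgp_{\lambda_1},\symgp_{\lambda_2})$ which are $\setx$-like via a $\symgp_{\sigma_i}$-conjugate are already $\setx$-like directly.

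For part (a), given $M$ with vertex $(\symgp_{\tau_1},\symgp_{\tau_2})\in\setz$, I would restrict $M$ to a $(\hecke_{\mu_1},\hecke_{\mu_2})$-bimodule and apply Lemma~\ref{3.2i}(a) to extract an indecomposable summand $P$ with the same vertex $(\symgp_{\tau_1},\symgp_{\tau_2})$; define $f(M):=P$. By Frobenius reciprocity $M\mid \hecku_{\sigma_1,\sigma_2}\otimes_{\hecku_{\mu_1,\mu_2}}P$ as $(\hecke_{\sigma_1},\hecke_{\sigma_2})$-bimodules, so restricting this induced module back via Lemma~\ref{3.3} yields $P\oplus Y$, where each summand of $Y$ has vertex contained in $(\symgp_{\tau_1}^{d_1}\cap\symgp_{\mu_1},\symgp_{\tau_2}^{d_2}\cap\symgp_{\mu_2})$ for some $(d_1,d_2)\neq(1,1)$. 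Since $\symgp_{\tau_i}\subseteq\symgp_{\lambda_i}$ these vertices fit the defining condition of $\sety$, and Lemma~\ref{3.4II} then forbids any such vertex from being $\symgp_{\sigma_i}$-conjugate to the $\setz$-vertex $(\symgp_{\tau_1},\symgp_{\tau_2})$. Uniqueness of $f(M)$ follows from Krull--Schmidt.

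For part (b), given $N$ with vertex $(\symgp_{\tau_1},\symgp_{\tau_2})\in\setz$, the induced bimodule $\hecku_{\sigma_1,\sigma_2}\otimes_{\hecku_{\mu_1,\mu_2}}N$ is $(\symgp_{\tau_1},\symgp_{\tau_2})$-projective by transitivity through Higman's criterion, and its restriction to $(\hecke_{\mu_1},\hecke_{\mu_2})$-bimodules splits as $N\oplus Y$ by Lemma~\ref{3.3}. Decompose the induced module into $(\hecke_{\sigma_1},\hecke_{\sigma_2})$-indecomposables $\bigoplus_j M_j$; by Krull--Schmidt exactly one summand, which I call $g(N)$, has $N$ appearing in its restriction. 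The vertex of $g(N)$ is forced to be $(\symgp_{\tau_1},\symgp_{\tau_2})$: relative projectivity of the whole induced module gives the upper bound, while Lemma~\ref{3.2i}(a) applied to $g(N)\downarrow$ together with $N\mid g(N)\downarrow$ and the vertex of $N$ gives the lower bound. For the other $M_j$, each restricts into $Y$, so (again via Lemma~\ref{3.2i}(a)) the vertex $(\symgp_{\pi_1},\symgp_{\pi_2})$ of $M_j$ is $\symgp_{\mu_i}$-conjugate into $(\symgp_{\tau_1}^{d_1}\cap\symgp_{\mu_1},\symgp_{\tau_2}^{d_2}\cap\symgp_{\mu_2})$; absorbing this conjugation into the $d_i$ and then conjugating into $(\symgp_{\lambda_1},\symgp_{\lambda_2})$ places $(\symgp_{\pi_1},\symgp_{\pi_2})$ in $\setx$, with Corollary~\ref{3.5} and Lemma~\ref{3.4II} handling the case where some $d_i=1$.

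Part (c) is then almost immediate from the uniqueness established in (a) and (b): $f(g(N))$ is by definition the unique indecomposable summand of $g(N)\downarrow$ with vertex $(\symgp_{\tau_1},\symgp_{\tau_2})$, but $N$ is such a summand, so $f(g(N))\cong N$; dually, $M\mid\hecku_{\sigma_1,\sigma_2}\otimes_{\hecku_{\mu_1,\mu_2}}f(M)$ and $M$ is the unique indecomposable summand with vertex $(\symgp_{\tau_1},\symgp_{\tau_2})$, giving $g(f(M))\cong M$. The main obstacle is the Mackey bookkeeping at the end of part (b): ensuring that every non-$g(N)$ indecomposable summand of the induced bimodule has vertex in $\setx$ rather than merely in $\sety$ requires carefully converting the conjugacy data supplied by Lemma~\ref{3.3} into the two-sided conjugation data defining $\setx$, and this is where the normalizer hypothesis (\ref{key_green}) does the essential work via Lemma~\ref{3.4II}.
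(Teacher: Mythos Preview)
Your proposal is correct and takes the same approach as the paper, which does not give a detailed argument but simply notes that the proof of \cite[Theorem~3.6]{du} carries over verbatim once Lemma~\ref{3.4II} and Corollary~\ref{3.5} are substituted for Du's corresponding results. One small wrinkle: in your part~(a), the assertion that $M$ is a summand of $\hecku_{\sigma_1,\sigma_2}\otimes_{\hecku_{\mu_1,\mu_2}}P$ is not an immediate consequence of $P\mid M$ and adjunction alone; the cleaner route (and the one Du takes) is to start instead from the module $Q$ of Lemma~\ref{3.2i}(b), for which $M\mid \hecku_{\sigma_1,\sigma_2}\otimes_{\hecku_{\mu_1,\mu_2}}Q$ holds by construction, apply Lemma~\ref{3.3} to $Q$, and then identify $P\cong Q$ a posteriori via Krull--Schmidt.
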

The proof of this is largely identical to that of~\cite[Theorem 3.6]{du}. Although we are working with more general $\lambda_i, \mu_i$, and with $\sigma_i$ instead of $(n)$, the proof follows through in the same way as we still have the key relationship (\ref{key_green}) between our subgroups, and our Lemma~\ref{3.4II} and Corollary~\ref{3.5} take the place of~\cite[Lemma 3.4, Corollary 3.5]{du}. Thus the double sum in the Mackey formula is fully accounted for. Although this correspondence will hold for any $(\symgp_{\tau_1}, \symgp_{\tau_2})\in \setz$, we will typically use it in the simpler case when $\tau_i = \lambda_i$.

We can strengthen our Green correspondence, as the ideas of~\cite[\S 12]{alperin} happily carry over to bimodules of Hecke algebras, affording us the following analogue of~\cite[Theorem 12.2]{alperin}:
\begin{theorem}
\label{maps}
Let $M$ be an indecomposable $(\hecke_{\sigma_1}, \hecke_{\sigma_2})$-bimodule with vertex $(\symgp_{\lambda_1},\symgp_{\lambda_2})$, and indecomposable $(\hecke_{\mu_1}, \hecke_{\mu_2})$-bimodule $f(M)$ its Green correspondent. If $U$ is an indecomposable $(\hecke_{\sigma_1}, \hecke_{\sigma_2})$-bimodule and $f(M)\mid U$ as $(\hecke_{\mu_1}, \hecke_{\mu_2})$-bimodules, then $M\cong U$.
\end{theorem}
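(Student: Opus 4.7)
The plan is to mimic the strategy of Alperin's Theorem~12.2 in the bimodule setting: show that $U$ is isomorphic to a direct summand of $\hecku_{\sigma_1, \sigma_2} \otimes_{\hecku_{\mu_1, \mu_2}} f(M)$, and then use the Green correspondence decomposition of this induced module to pin down $U \cong M$.

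First, apply Theorem~\ref{green}(b) to $f(M)$, whose vertex $(\symgp_{\lambda_1}, \symgp_{\lambda_2})$ lies in $\setz$, yielding
\[
\hecku_{\sigma_1, \sigma_2} \otimes_{\hecku_{\mu_1, \mu_2}} f(M) \cong M \oplus X,
\]
with each indecomposable summand of $X$ having vertex in $\setx$. The hypothesis provides a split inclusion $\iota: f(M) \hookrightarrow U|_{\hecku_{\mu_1, \mu_2}}$ and a split projection $\pi: U|_{\hecku_{\mu_1, \mu_2}} \twoheadrightarrow f(M)$. By Frobenius reciprocity, $\iota$ corresponds to a bimodule map $\phi: \hecku_{\sigma_1, \sigma_2} \otimes_{\hecku_{\mu_1, \mu_2}} f(M) \to U$; and since Iwahori--Hecke algebras are symmetric, induction is also right adjoint to restriction, so $\pi$ corresponds to a map $\psi: U \to \hecku_{\sigma_1, \sigma_2} \otimes_{\hecku_{\mu_1, \mu_2}} f(M)$ in the other direction.

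The core step is to verify that $\phi \psi \in \Endo_{\hecku_{\sigma_1, \sigma_2}}(U)$ is an isomorphism. Since $U$ is indecomposable this endomorphism ring is local, so it suffices to check that $\phi \psi$ is not nilpotent. Restricting to $\hecku_{\mu_1, \mu_2}$, the Mackey formula (Theorem~\ref{Mackey}) gives $(\hecku_{\sigma_1, \sigma_2} \otimes_{\hecku_{\mu_1, \mu_2}} f(M))|_{\hecku_{\mu_1, \mu_2}} \cong f(M) \oplus W$ with $W$ being $\sety$-projective, and the Frobenius adjunctions force the $f(M) \to f(M)$ blocks of $\phi|$ and $\psi|$ to be the identity. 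A block-matrix computation then shows that the $(f(M), f(M))$-entry of every power $(\phi \psi|)^n$ equals the identity plus terms factoring through $W$. By Lemma~\ref{3.4II} the vertex of $f(M)$ is not in $\sety$, so any endomorphism of $f(M)$ factoring through the $\sety$-projective $W$ must lie in the radical of the local ring $\Endo_{\hecku_{\mu_1, \mu_2}}(f(M))$, for otherwise $f(M)$ would be a summand of $W$, contradicting its vertex. Hence the $(1,1)$-block remains a unit for every $n$, so $\phi \psi$ cannot be nilpotent and must be an isomorphism.

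Consequently $\phi$ is a split surjection and $U$ is isomorphic to a direct summand of $M \oplus X$. By Krull--Schmidt and the indecomposability of $U$, either $U \cong M$ and we are done, or $U$ is isomorphic to a summand of $X$. In the latter case the vertex of $U$ lies in $\setx$, so by Corollary~\ref{3.5} the restriction $U|_{\hecku_{\mu_1, \mu_2}}$ is $\sety$-projective; this contradicts $f(M) \mid U|_{\hecku_{\mu_1, \mu_2}}$, since by Lemma~\ref{3.4II} the vertex of $f(M)$ lies outside $\sety$. Thus $U \cong M$. The main technical obstacle is the non-nilpotence of $\phi \psi$: one must carefully extract the bimodule version of Frobenius duality (using that Hecke algebras are symmetric so induction and coinduction agree), and then verify that the iterated block-matrix entries genuinely remain units modulo the radical -- everything else is a standard Green-correspondence/Krull--Schmidt argument transplanted from Alperin's proof for finite groups.
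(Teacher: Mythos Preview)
The paper does not actually give a proof of this theorem --- it simply asserts that ``the ideas of \cite[\S 12]{alperin} happily carry over to bimodules of Hecke algebras.'' Your proposal is precisely an execution of that strategy, so you are following the intended route.

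The argument is correct in outline, with one imprecision worth flagging. You claim that ``the Frobenius adjunctions force the $f(M)\to f(M)$ blocks of $\phi|$ and $\psi|$ to be the identity.'' For $\phi|$ this is immediate from $\phi|\circ\eta^L=\iota$, where $\eta^L$ is the left-adjunction unit, which does land in the $f(M)$ summand of the Mackey decomposition. For $\psi|$, however, what the right adjunction gives you is $\epsilon^R\circ\psi|=\pi$, and the counit $\epsilon^R$ is not a priori the naive projection $p$ onto the $f(M)$ summand. What you actually need is that $c:=p\circ\psi|\circ\iota$ is a \emph{unit} in $\End_{\hecku_{\mu_1,\mu_2}}(f(M))$, not that it equals $1$. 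This follows anyway: writing $\epsilon^R=\epsilon^R_1 p+\epsilon^R_2 q$ with respect to $f(M)\oplus W$, the relation $\epsilon^R\psi|\iota=\pi\iota=1$ becomes $\epsilon^R_1 c+\epsilon^R_2(q\psi|\iota)=1$; the second term factors through the $\sety$-projective $W$ and hence lies in the radical, so $\epsilon^R_1 c$ is a unit, forcing $c$ to be one. With $c$ a unit (rather than the identity), your block-matrix/non-nilpotence computation goes through unchanged, and the conclusion via Krull--Schmidt and Corollary~\ref{3.5} is exactly right. You correctly identified this step as the main obstacle.
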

We can also form the following corollary which will be useful in later sections.
\begin{corollary}
\label{maps_corol}
Let $M$ and $f(M)$ be as in Theorem~\ref{maps}. If $U$ is a $(\hecke_{\sigma_1}, \hecke_{\sigma_2})$-bimodule, then $M\mid U$ as $(\hecke_{\sigma_1}, \hecke_{\sigma_2})$-bimodules, if and only if $f(M)\mid U$ as $(\hecke_{\mu_1}, \hecke_{\mu_2})$-bimodules.
\end{corollary}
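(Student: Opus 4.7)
The plan is to decompose the biconditional into its two directions and handle each using the machinery already built. The forward direction is essentially a one-line consequence of the Green correspondence: if $M \mid U$ as $(\hecke_{\sigma_1}, \hecke_{\sigma_2})$-bimodules then the same divisibility persists after restricting the bimodule structure to $(\hecke_{\mu_1}, \hecke_{\mu_2})$, and part (a) of Theorem~\ref{green} supplies a decomposition $M \cong f(M) \oplus Y$ in the smaller category, whence $f(M) \mid M \mid U$ there.

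The reverse direction is the substantive one. Starting from $f(M) \mid U$ as $(\hecke_{\mu_1}, \hecke_{\mu_2})$-bimodules, I would first use Krull--Schmidt (valid since $\hecku_{\sigma_1, \sigma_2}$ is finite-dimensional over $F$) to write $U = \bigoplus_i U_i$ as a sum of indecomposable $(\hecke_{\sigma_1}, \hecke_{\sigma_2})$-bimodules. The same equality remains valid at the level of $(\hecke_{\mu_1}, \hecke_{\mu_2})$-bimodules, where each $U_i$ may split further; a second application of Krull--Schmidt in this smaller bimodule category, together with the indecomposability of $f(M)$, forces $f(M) \mid U_i$ for some index $i$. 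Theorem~\ref{maps} then pins down $U_i \cong M$ as $(\hecke_{\sigma_1}, \hecke_{\sigma_2})$-bimodules, and hence $M \mid U$ in that category.

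There is no genuine obstacle here: the corollary amounts to feeding Krull--Schmidt into Theorem~\ref{maps}, mirroring the standard finite-group argument. The only routine check is that both enveloping algebras $\hecku_{\sigma_1, \sigma_2}$ and $\hecku_{\mu_1, \mu_2}$ are finite-dimensional over $F$, which is immediate from the bases of $\hecke_{\sigma_i}$ and $\hecke_{\mu_i}$.
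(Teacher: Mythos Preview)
Your proposal is correct and follows essentially the same approach as the paper: both decompose $U$ into indecomposable $(\hecke_{\sigma_1},\hecke_{\sigma_2})$-summands $U_i$, use Krull--Schmidt at the $(\hecke_{\mu_1},\hecke_{\mu_2})$-level to locate $f(M)\mid U_i$ for some $i$, and then invoke Theorem~\ref{maps} to conclude $U_i\cong M$. The only cosmetic difference is that the paper sets up the decomposition $U=\bigoplus U_i$ once at the outset and uses it for both directions, whereas you handle the forward implication directly via $f(M)\mid M\mid U$ without needing to pass through a particular $U_i$.
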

\begin{proof}
Take $U = U_1\oplus \dots \oplus U_t$ a decomposition of $U$ into direct summands as $(\hecke_{\sigma_1}, \hecke_{\sigma_2})$-bimodules. As $M$ is indecomposable, then $M\mid U$ means that $M = U_i$ some $1\leq i \leq t$. Hence we get $f(M)\mid U_i \mid U$ as $(\hecke_{\mu_1},\hecke_{\mu_2})$-bimodules. For the other direction if $f(M)\mid U$ as $(\hecke_{\mu_1},\hecke_{\mu_2})$-bimodules, then $f(M)\mid U_i$ for some $i$, hence by Theorem~\ref{maps}, $M\cong U_i$ and so $M\mid U$.
\end{proof}
\section{A Brauer correspondence for Hecke algebras}
Now we have a version of the Green correspondence, the next logical step is to form a type of Brauer correspondence for blocks of Hecke algebras, giving results akin to Brauer's first main theorem (see for example~\cite[Theorem 14.2]{alperin}). To begin this process, we start with the following definition, an analogue of the one given for finite groups in~\cite[\S 14]{alperin}.
\begin{definition}
For $\mu \vDash n$, let $b$ be a block of $\hecke_\mu$, and $B$ a block of $\hecke_n$. We say $B$ is the \emph{Brauer correspondent} of $b$, and write $b^{\hecke_n} = B$, if $b\mid B$ as $(\hecke_\mu, \hecke_\mu)$-bimodules, and $B$ is the unique block of $\hecke_n$ with this property.
\end{definition}
As $\hecke_\mu \mid \hecke_n$ as $(\hecke_\mu, \hecke_\mu)$-bimodules (consider the decomposition given by $(\symgp_\mu,\symgp_\mu)$-double coset representatives), $b$ will always occur in the restriction of at least one block, but there is no prior guarantee that its Brauer correspondent will exist, as $b$ may occur in the restriction of more than one block. We first state some general properties of Brauer correspondents, omitting the proofs as they are largely identical to those in~\cite[Lemma 14.1]{alperin}.
\begin{lemma} 
\label{BCL1}
Let $b$ be a block of $\hecke_\mu$ for $\mu \vDash n$ with vertex $(\symgp_{\tau_1},\symgp_{\tau_2})$ as a $(\hecke_\mu,\hecke_\mu)$-bimodule. Then if $b^{\hecke_n}$ is defined, $(\symgp_{\tau_1},\symgp_{\tau_2})$ is contained in a vertex of $b^{\hecke_n}$.
\end{lemma}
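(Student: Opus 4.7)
The plan is to push the inclusion $b \mid B$ through the characterisation of the vertex of $B := b^{\hecke_n}$ using Higman's criterion and the Mackey formula, then reading off information about the vertex of $b$. Fix a vertex $(\symgp_{\lambda_1}, \symgp_{\lambda_2})$ of $B$ as a $(\hecke_n, \hecke_n)$-bimodule; the goal is to show that, up to $\symgp_n$-conjugation, $\symgp_{\tau_i} \subseteq \symgp_{\lambda_i}$ for $i=1,2$.

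First I would apply the bimodule version of Higman's criterion (Theorem~\ref{higman}) to obtain
\[B \mid \hecku_{n,n} \otimes_{\hecku_{\lambda_1, \lambda_2}} B \cong \hecke_n \otimes_{\hecke_{\lambda_1}} B \otimes_{\hecke_{\lambda_2}} \hecke_n.\]
Since $b \mid B$ as $(\hecke_\mu, \hecke_\mu)$-bimodules by the definition of Brauer correspondence, $b$ is then a summand of the right-hand side, regarded as a $(\hecke_\mu, \hecke_\mu)$-bimodule. Next I would decompose that right-hand side using the Mackey formula (Theorem~\ref{Mackey}) as a direct sum of terms
\[\hecku_{\mu, \mu} \otimes_{\hecku_{\nu(d_1), \nu(d_2)}} \bigl(\underline{T}_{d_1^{-1}, d_2} \otimes_{\hecku_{\lambda_1, \lambda_2}} B\bigr),\]
indexed by $(d_1, d_2) \in \mathscr{D}_{\lambda_1, \mu}^n \times \mathscr{D}_{\lambda_2, \mu}^n$, with $\symgp_{\nu(d_i)} = \symgp_{\lambda_i}^{d_i} \cap \symgp_\mu$. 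Each such summand is manifestly $(\symgp_{\lambda_1}^{d_1} \cap \symgp_\mu, \symgp_{\lambda_2}^{d_2} \cap \symgp_\mu)$-projective as a $(\hecke_\mu, \hecke_\mu)$-bimodule.

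Finally, a block of $\hecke_\mu$ is indecomposable as a bimodule over itself (its centre being local), so by Krull--Schmidt $b$ must occur as a summand of a single Mackey term, say for a fixed pair $(d_1, d_2)$. Hence $b$ is $(\symgp_{\lambda_1}^{d_1} \cap \symgp_\mu, \symgp_{\lambda_2}^{d_2} \cap \symgp_\mu)$-projective, and by the minimality property of its vertex (Theorem~\ref{vertices}) there exist $x_i \in \symgp_\mu$ with
\[\symgp_{\tau_i}^{x_i} \subseteq \symgp_{\lambda_i}^{d_i} \cap \symgp_\mu \subseteq \symgp_{\lambda_i}^{d_i}\]
for $i = 1, 2$, so that $\symgp_{\tau_i}^{x_i d_i^{-1}} \subseteq \symgp_{\lambda_i}$. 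This exhibits a $\symgp_n$-conjugate of $(\symgp_{\tau_1}, \symgp_{\tau_2})$ contained in the vertex $(\symgp_{\lambda_1}, \symgp_{\lambda_2})$ of $B$, which is exactly what was wanted. There is no serious obstacle here beyond carefully tracking the conjugations in the Mackey decomposition and appealing to bimodule indecomposability of a block; the strategy is the natural bimodule analogue of the classical argument for Brauer's first main theorem (cf.~\cite[\S 14]{alperin}).
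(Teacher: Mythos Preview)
Your argument is correct and is precisely the classical argument from \cite[Lemma~14.1]{alperin} transported to the bimodule setting via Theorem~\ref{Mackey} and Theorem~\ref{vertices}. The paper itself omits the proof entirely, stating only that it is ``largely identical'' to Alperin's, so you have essentially supplied the details the paper elides.
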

\begin{lemma}
\label{BCL2}
Let $\symgp_\lambda\subseteq \symgp_\mu \subseteq \symgp_n$ be a chain of parabolic subgroups of $\symgp_n$. If $b$ is a block of $\hecke_\lambda$, and all three of $b^{\hecke_n}$, $b^{\hecke_\mu}$ and $(b^{\hecke_\mu})^{\hecke_n}$ are defined, then $(b^{\hecke_\mu})^{\hecke_n} = b^{\hecke_n}$.
\end{lemma}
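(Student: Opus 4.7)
The plan is to argue by uniqueness directly from the definition of the Brauer correspondent. Given that $b^{\hecke_n}$ is defined, it is the \emph{unique} block of $\hecke_n$ having $b$ as a summand as a $(\hecke_\lambda, \hecke_\lambda)$-bimodule. So it suffices to show that $(b^{\hecke_\mu})^{\hecke_n}$, which by hypothesis is a block of $\hecke_n$, also contains $b$ as a direct summand when restricted to a $(\hecke_\lambda, \hecke_\lambda)$-bimodule.

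To do this, I would chain the two summand relationships coming from the two intermediate Brauer correspondences. Since $b^{\hecke_\mu}$ is defined, we have $b \mid b^{\hecke_\mu}$ as $(\hecke_\lambda, \hecke_\lambda)$-bimodules. Since $(b^{\hecke_\mu})^{\hecke_n}$ is defined, we have $b^{\hecke_\mu} \mid (b^{\hecke_\mu})^{\hecke_n}$ as $(\hecke_\mu, \hecke_\mu)$-bimodules. Restriction of scalars preserves direct sum decompositions, so the second relation still holds as $(\hecke_\lambda, \hecke_\lambda)$-bimodules. Combining, $b \mid (b^{\hecke_\mu})^{\hecke_n}$ as $(\hecke_\lambda, \hecke_\lambda)$-bimodules, and then uniqueness of $b^{\hecke_n}$ forces $(b^{\hecke_\mu})^{\hecke_n} = b^{\hecke_n}$.

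The argument is entirely formal, relying only on the transitivity of the ``is a direct summand of'' relation under restriction and on the uniqueness built into the definition of the Brauer correspondent; there is no real obstacle and no need to invoke the Green correspondence or Mackey formula at this stage. The hypotheses that all three correspondents exist are used precisely to guarantee that the chained summand relations make sense and that the target $b^{\hecke_n}$ we are identifying is well-defined.
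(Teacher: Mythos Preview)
Your argument is correct and is precisely the standard transitivity argument the paper has in mind: the paper omits the proof of this lemma, remarking only that it is ``largely identical'' to that of \cite[Lemma~14.1]{alperin}, and what you have written is exactly that argument adapted to the Hecke algebra setting.
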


\subsection{Existence of Brauer correspondents}

Let $a\geq 0$, $d \geq 1$ and $n = a + de$. Define compositions of $n$: 
\begin{align*}
\mu &= (a,de), \\ \alpha &= (a, 1^{de}), \\ \tau &= (1^a, de), 
\end{align*}
so $\symgp_\alpha\times\symgp_\tau = \symgp_\mu$. Recall, from Lemma~\ref{mu-mu_cosets}, we have the following description of $\mathscr{D}_{\mu, \mu}^{(n)}$:
\[\mathscr{D}^{(n)}_{\mu,\mu}=\left\{d_k = \prod_{i=1}^k (a-k+i,a+i) \,\bigg|\, k=0,\dots,\text{min}(a,de)\right\}.\]
This description tells us that for $i = 0, \dots, \text{min}(a,de)$, we have that $\symgp_{\nu_i} := \symgp_{\mu}^{d_i} \cap \symgp_\mu$ has corresponding composition $\nu_i = (a-i, i, i, de - i)$. We define compositions: 
\begin{align*}
\tau_i &= (1^{a+i}, de-i),\\
\tau_i' &= (1^a, i, 1^{de-i}),\\
\tilde{\tau_i} &= (1^a, i, de-i),\\
\alpha_i &= (1^{a-i}, i, 1^{de}), \\
\tilde{\alpha_i} &= (a-i,i,1^{de}).
\end{align*}
Note in particular that $\symgp_{\tilde{\tau_i}} = \symgp_{\tau_i}\times \symgp_{\tau_i'}$ and $\symgp_{\nu_i} = \symgp_{\tilde{\alpha_i}}\times \symgp_{\tilde{\tau_i}}$. This lets us present the following technical lemma:
\begin{lemma}
\label{main_h_i}
For $0\leq i \leq \min(a,de)$, as an $(\hecke_\tau,\hecke_\tau)$-module, $\hecke_\mu T_{d_i} \hecke_\mu$ is $(\symgp_{\tau_i},\symgp_\tau)$-projective.
\end{lemma}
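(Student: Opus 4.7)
The plan is to realise $H := \hecke_\mu T_{d_i}\hecke_\mu$ as $\hecke_\tau\otimes_{\hecke_{\tilde{\tau_i}}} M$ for an explicit $(\hecke_{\tilde{\tau_i}},\hecke_\tau)$-subbimodule $M$, and then to show that $M$ itself is literally of the induced form $\hecke_{\tilde{\tau_i}}\otimes_{\hecke_{\tau_i}} V$. By transitivity $H$ will then be induced from $\hecke_{\tau_i}$, whence $(\symgp_{\tau_i},\symgp_\tau)$-projective by Higman's criterion (Theorem~\ref{higman}).

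The crucial commutation identities, which drive both reductions, are $\hecke_{\tau_i'}T_{d_i} = T_{d_i}\hecke_{\alpha_i}$ and $\hecke_{\tau_i}T_{d_i} = T_{d_i}\hecke_{\tau_i}$. The first says $d_i$-conjugation swaps the two $\symgp_i$-factors $\symgp_{\tau_i'}$ (on $\{a{+}1,\dots,a{+}i\}$) and $\symgp_{\alpha_i}$ (on $\{a{-}i{+}1,\dots,a\}$); the length-additivity $\ell(wd_i) = \ell(w)+\ell(d_i)$ needed to lift this to the Hecke level follows from the minimality of $d_i$ as a $(\symgp_\mu,\symgp_\mu)$-double coset representative given by Lemma~\ref{mu-mu_cosets}. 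The second is immediate since $\symgp_{\tau_i}$ lies on positions disjoint from the support of $d_i$. Combining them gives the master identity $\hecke_{\tilde{\tau_i}}T_{d_i} = T_{d_i}\hecke_\pi$, where $\hecke_\pi := \hecke_{\alpha_i}\otimes\hecke_{\tau_i}$ is the parabolic subalgebra of $\hecke_\mu$ corresponding to the Young subgroup $\symgp_{\alpha_i}\times\symgp_{\tau_i}$.

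For the first reduction, I would expand $\hecke_\tau = \bigoplus_{x\in\mathscr{L}_{\tilde{\tau_i}}^\tau} T_x\hecke_{\tilde{\tau_i}}$ and use $\hecke_{\tilde{\tau_i}}T_{d_i}\hecke_\mu = T_{d_i}\hecke_\pi\hecke_\mu = T_{d_i}\hecke_\mu$ to obtain $\hecke_\tau T_{d_i}\hecke_\mu = \bigoplus_x T_xT_{d_i}\hecke_\mu$. Since $\hecke_\alpha$ centralises $\hecke_\tau$, multiplying on the left by $\hecke_\alpha$ yields $H = \bigoplus_x T_x M$ with $M := \hecke_\alpha T_{d_i}\hecke_\mu$; this identifies $H \cong \hecke_\tau\otimes_{\hecke_{\tilde{\tau_i}}} M$ as $(\hecke_\tau,\hecke_\tau)$-bimodules. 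Repeating the same trick inside $M$ with $\hecke_\alpha = \bigoplus_{y\in\mathscr{L}_{\tilde{\alpha_i}}^\alpha} T_y\hecke_{\tilde{\alpha_i}}$ gives a decomposition $M = \bigoplus_y T_yT_{d_i}\hecke_\mu$ of $(\hecke_{\tilde{\tau_i}},\hecke_\tau)$-bimodules. Right multiplication by $T_yT_{d_i}$ is a right-$\hecke_\mu$-isomorphism $\hecke_\mu\xrightarrow{\sim} T_yT_{d_i}\hecke_\mu$, and pulling back the left $\hecke_{\tilde{\tau_i}}$-action along it produces exactly the natural left $\hecke_\pi$-action on $\hecke_\mu$, via the algebra isomorphism $\hecke_{\tilde{\tau_i}}\xrightarrow{\sim}\hecke_\pi$ sending $\hecke_{\tau_i'}$ to $\hecke_{\alpha_i}$ by $d_i$-conjugation and $\hecke_{\tau_i}$ to itself. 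So each summand of $M$ is isomorphic to $\hecke_\mu$ as a $(\hecke_\pi,\hecke_\tau)$-bimodule.

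It then remains to exhibit $\hecke_\mu$ as an induced bimodule from $\hecke_{\tau_i}$ up to $\hecke_\pi$: choose a coset basis $V_\alpha\subseteq\hecke_\alpha$ making $\hecke_\alpha$ free as a left $\hecke_{\alpha_i}$-module, and put $V := V_\alpha\otimes\hecke_\tau$. Since $\hecke_{\tau_i}$ centralises $V_\alpha$ and acts by left multiplication on $\hecke_\tau$, $V$ carries a natural $(\hecke_{\tau_i},\hecke_\tau)$-bimodule structure, and the multiplication map gives a $(\hecke_\pi,\hecke_\tau)$-bimodule isomorphism $\hecke_\pi\otimes_{\hecke_{\tau_i}} V \xrightarrow{\sim}\hecke_\mu$. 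Pulling back, $M \cong \hecke_{\tilde{\tau_i}}\otimes_{\hecke_{\tau_i}} V^{\oplus|\mathscr{L}_{\tilde{\alpha_i}}^\alpha|}$, and therefore $H \cong \hecke_\tau\otimes_{\hecke_{\tau_i}} V^{\oplus|\mathscr{L}_{\tilde{\alpha_i}}^\alpha|}$ as $(\hecke_\tau,\hecke_\tau)$-bimodules, which is $(\symgp_{\tau_i},\symgp_\tau)$-projective. The main obstacle is the careful bookkeeping of the $d_i$-twist on the left $\hecke_{\tilde{\tau_i}}$-action when identifying $T_yT_{d_i}\hecke_\mu$ with $\hecke_\mu$, together with directness of the various $\bigoplus_x T_x(\cdots)$ sums; the latter follows from $\symgp_\mu\cap d_i\symgp_\mu d_i^{-1} = \symgp_{\nu_i}$, which intersects $\symgp_\tau$ (respectively $\symgp_\alpha$) in exactly $\symgp_{\tilde{\tau_i}}$ (respectively $\symgp_{\tilde{\alpha_i}}$), so distinct representatives in $\mathscr{L}_{\tilde{\tau_i}}^\tau$ (respectively $\mathscr{L}_{\tilde{\alpha_i}}^\alpha$) yield distinct $\symgp_\mu$-cosets on the right.
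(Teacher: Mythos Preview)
Your approach is correct and genuinely different from the paper's. Both rest on the same underlying fact---that $d_i$-conjugation swaps $\symgp_{\tau_i'}$ with $\symgp_{\alpha_i}$ and fixes $\symgp_{\tau_i}$---but you package this as the single algebra identity $\hecke_{\tilde{\tau_i}}T_{d_i}=T_{d_i}\hecke_\pi$ and use it to identify $\hecke_\mu T_{d_i}\hecke_\mu$ directly as a two-step induced bimodule $\hecke_\tau\otimes_{\hecke_{\tilde{\tau_i}}}\hecke_{\tilde{\tau_i}}\otimes_{\hecke_{\tau_i}}V^{\oplus\cdots}$. The paper instead writes down an explicit $F$-basis $\{T_{x_1}T_{x_2}T_yT_{d_i}T_{h_1}T_{h_2}\}$, carves out sub-bimodules $M_{x_1,h_1}$ indexed by $\mathscr{L}^\alpha_{\alpha_i}\times\mathscr{R}^\alpha_{\tilde{\alpha_i}}$, and verifies closure under the left and right $\hecke_\tau$-actions by a case analysis on whether $j$ and $j+1$ lie in the same row of a tableau. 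Your route is cleaner and avoids all tableau bookkeeping; the paper's route has the virtue of being entirely self-contained at the level of basis elements, so that each step is mechanically checkable.

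Two small points. First, where you write ``Right multiplication by $T_yT_{d_i}$ is a right-$\hecke_\mu$-isomorphism $\hecke_\mu\to T_yT_{d_i}\hecke_\mu$'' you mean \emph{left} multiplication by $T_yT_{d_i}$; the map is $h\mapsto T_yT_{d_i}h$. Second, the directness of your coset decompositions $\bigoplus_xT_xM$ and $\bigoplus_yT_yT_{d_i}\hecke_\mu$ is most cleanly established by a dimension count: surjectivity of the multiplication maps $\hecke_\tau\otimes_{\hecke_{\tilde{\tau_i}}}M\to H$ and $\hecke_\alpha\otimes_{\hecke_{\tilde{\alpha_i}}}T_{d_i}\hecke_\mu\to M$ is immediate, and freeness of $\hecke_\tau$ over $\hecke_{\tilde{\tau_i}}$ (respectively $\hecke_\alpha$ over $\hecke_{\tilde{\alpha_i}}$) together with $\lvert\symgp_\mu d_i\symgp_\mu\rvert=\lvert\symgp_\mu\rvert^2/\lvert\symgp_{\nu_i}\rvert$ forces the dimensions to match. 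Your sketch via distinct right $\symgp_\mu$-cosets is also fine once you note that $T_x$ is invertible in $\hecke_n$, so left multiplication by it is injective.
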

\begin{proof}
By \cite[Proposition 4.4]{mathas}, every element $w\in\symgp_n$ can be uniquely represented as a product $w = gd_i h$ for $g\in \symgp_\mu$, $d_i \in \mathscr{D}^{(n)}_{\mu,\mu}$ and $h\in \mathscr{R}^\mu_{\nu_i}$, with $\ell(w) = \ell(g) + \ell(d_i) + \ell(h)$. Hence the following gives us an $F$-basis for $\hecke_\mu T_{d_i} \hecke_\mu$:
\[\{ T_{g d_i h} = T_g T_{d_i} T_h : g \in \symgp_\mu, h\in \mathscr{R}^\mu_{\nu_i}\}.\]
Furthermore, as $\symgp_\mu = \symgp_\alpha \times \symgp_\tau$, we can further categorise our basis (as for each $g\in \symgp_\mu$, there exists unique $x\in \symgp_\alpha$ and $y\in \symgp_\tau$ with $g = xy$, and in addition $\ell(g) = \ell(x) + \ell(y)$). In the same vein, $\mathscr{R}^\mu_{\nu_i} = \mathscr{R}^\alpha_{\tilde{\alpha_i}} \times \mathscr{R}^\tau_{\tilde{\tau_i}}$, so $h\in \mathscr{R}^\mu_{\nu_i}$ can be written uniquely as $h = h_1 h_2$ with $h_1 \in \mathscr{R}^\alpha_{\tilde{\alpha_i}}$, $h_2\in \mathscr{R}^\tau_{\tilde{\tau_i}}$, and $\ell(h_1) + \ell(h_2) = \ell(h)$. In particular, $T_{h_1}T_{h_2} = T_{h_2}T_{h_1}$ as $h_1$ commutes with $\symgp_\tau$. 
Also, as $\symgp_{\alpha_i}\subseteq \symgp_\alpha$, we can write $x\in \symgp_\alpha$ uniquely as $x = x_1x_2$ with $x_1\in \mathscr{L}^{\alpha}_{\alpha_i}$, $x_2\in \symgp_{\alpha_i}$ and $\ell(x_1) + \ell(x_2) = \ell(x)$. Therefore our $F$-basis for $\hecke_\mu T_{d_i} \hecke_\mu$ can be written as:
\[\{ T_{x_1} T_{x_2} T_y T_{d_i} T_{h_1}T_{h_2} : x_1\in \mathscr{L}^{\alpha}_{\alpha_i}, x_2\in \symgp_{\alpha_i}, y\in \symgp_\tau, h_1 \in \mathscr{R}^\alpha_{\tilde{\alpha_i}}, h_2\in \mathscr{R}^\tau_{\tilde{\tau_i}}\}.\]
Now for some fixed $x_1\in \mathscr{L}^\alpha_{\alpha_i}$ and $h_1\in \mathscr{R}^\alpha_{\tilde{\alpha_i}}$, consider the vector subspace 
\[M_{x_1, h_1}:=\langle T_{x_1} T_{x_2} T_y T_{d_i} T_{h_1}T_{h_2} : x_2\in \symgp_{\alpha_i}, y\in \symgp_\tau, h_2\in \mathscr{R}^\tau_{\tilde{\tau_i}}\rangle.\]
We show that this is closed under left and right multiplication by elements of $\hecke_\tau$, so is a $(\hecke_\tau, \hecke_\tau)$-bimodule.

Let $s_j = (j, j+1)\in \symgp_\tau$. Multiplying basis element $m_{x_2,y,h_2} := T_{x_1}T_{x_2}T_{y}T_{d_i}T_{h_1}T_{h_2}$ by $T_{j}$ on the left:
\begin{align*} T_jm_{x_2,y,h_2} &= T_{j}T_{x_1}T_{x_2}T_y T_{d_i}T_{h_1}T_{h_2} \\ &= T_{x_1}T_{x_2}T_{j}T_y T_{d_i} T_{h_1}T_{h_2} \\
& = \begin{cases} T_{x_1}T_{x_2}T_{s_j y} T_{d_i}T_{h_1}T_{h_2} & \text{ if }\ell(s_j y) > \ell(y), \\
			(q-1)T_{x_1}T_{x_2}T_{y} T_{d_i}T_{h_1}T_{h_2} + qT_{x_1}T_{x_2}T_{s_jy}T_{d_i}T_{h_1}T_{h_2} & \text{ if } \ell(s_j y) < \ell(y),
			\end{cases}
\end{align*}
as $\hecke_\alpha$ and $\hecke_\tau$ commute. As $y, s_j$ and hence $s_jy\in \symgp_\tau$, $M_{x_1, h_1}$ is a left $\hecke_\tau$-module. We now need to check right multiplication.
\begin{align*} m_{x_2,y,h_2}T_j &= T_{x_1}T_{x_2}T_y T_{d_i}T_{h_1}T_{h_2}T_{j}  \\ &= T_{x_1}T_{x_2}T_y T_{d_i} T_{h_2}T_{j}T_{h_1} \\
& = \begin{cases} T_{x_1}T_{x_2}T_{y} T_{d_i}T_{h_2s_j}T_{h_1} & \text{ if }\ell(h_2 s_j) > \ell(h_2), \\
			(q-1)T_{x_1}T_{x_2}T_y T_{d_i}T_{h_1}T_{h_2} + qT_{x_1}T_{x_2}T_{y}T_{d_i}T_{h_2s_j}T_{h_1} & \text{ if } \ell(h_2 s_j) < \ell(h_2).
			\end{cases}
\end{align*}
Hence it is sufficient to show that $T_{x_1}T_{x_2}T_{y}T_{d_i}T_{h_2s_j}T_{h_1}\in M_{x_1, h_1}$. To do this we split into cases dependent on whether $j$ and $j+1$ are in the same row of $(\mathfrak{t}^{\nu_i})\cdot h_2$ or not. For this, we will liberally use~\cite[Proposition 3.3, Corollary 4.4]{mathas}.
\begin{itemize}
\item If they are not in the same row, then $(\mathfrak{t}^{\nu_i})\cdot h_2s_j$ is a row-standard tableau. Hence $h_2s_j\in \mathscr{R}^{\tau}_{\tilde{\tau_i}}$, thus our element lies in $M_{x_1, h_1}$.
\item If they are in the same row, then there exists $k$ such that $h_2(k) = j$ and $h_2(k+1) = j+1$, as $(\mathfrak{t}^{\nu_i})\cdot h_2$ is row standard, hence $j$ must be next to $j+1$. Therefore there exists an elementary transposition $s_k\in \symgp_\tau$ with $s_k h_2 = h_2 s_j$. Furthermore, as $h_2$ is a minimal right coset representative, $\ell(s_k h_2) = \ell(s_k) + \ell(h_2)$, thus $T_{s_k h_2} = T_{s_k} T_{h_2}$. Therefore:
\[T_{x_1}T_{x_2}T_y T_{d_i}T_{h_2s_j}T_{h_1} = T_{x_1}T_{x_2}T_y T_{d_i}T_{s_k}T_{h_2}T_{h_1}.\]
In addition as $k$ and $k+1$ are in the same row of $\mathfrak{t}^{\tilde{\tau_i}}$,  $s_k\in \symgp_{\tau_i}$ or $s_k\in \symgp_{\tau_i'}$. We further split based on these cases.
\begin{itemize}
\item If $s_k\in \symgp_{\tau_i}$, then:
\[ T_{x_1}T_{x_2}T_y T_{d_i}T_{s_k}T_{h_1}T_{h_2} =  T_{x_1}T_{x_2}T_y T_{s_k}T_{d_i}T_{h_1}T_{h_2},\]
which lies in $M_{x_1, h_1}$ as before.
\item If $s_k\in \symgp_{\tau_i'}$, then:
\[ T_{x_1}T_{x_2}T_y T_{d_i}T_{s_k}T_{h_1}T_{h_2} = T_{x_1}T_{x_2}T_y T_{s_{k-i}}T_{d_i}T_{h_1}T_{h_2}.\]
Note $s_{k-i}d_i = d_i s_k$ as $d_i$ is both a left and right coset representative of $\symgp_\mu$ in $\symgp_n$. Furthermore, by minimality $\ell(s_{k-i}d_i) = 1 + \ell(d_i) = \ell(d_i s_k)$. Then:
\[T_{x_1}T_{x_2}T_y T_{s_{k-i}}T_{d_i}T_{h_1}T_{h_2} = T_{x_1}(T_{x_2} T_{s_{k-i}})T_yT_{d_i}T_{h_1}T_{h_2},\]
and as $x_2, s_{k-i} \in \symgp_{\alpha_i}$, we have an element in $M_{x_1, h_1}$.
\end{itemize}
So if $j$ and $j+1$ are in the same row, our element again lies in $M_{x_1, h_1}$.
\end{itemize}
Thus $M_{x_1, h_1}$ is closed under right multiplication by $\hecke_\tau$. As multiplication in $\hecke_n$ is associative, for a fixed $x_1$ and $h_1$, $M_{x_1, h_1}$ is an $(\hecke_\tau, \hecke_\tau)$-submodule of $\hecke_\mu T_{d_i} \hecke_\mu$. Furthermore, since we have described bases for the bimodules involved as vector spaces, we have a direct sum decomposition of $\hecke_\mu T_{d_i} \hecke_\mu$ as a $(\hecke_\tau, \hecke_\tau)$-bimodule:
\[\hecke_\mu T_{d_i} \hecke_\mu = \bigoplus_{x_1 \in \mathscr{L}^\alpha_{\alpha_i}, h_1\in \mathscr{R}^\alpha_{\tilde{\alpha_i}}} M_{x_1, h_1} \cong \bigoplus_{x_1 \in \mathscr{L}^\alpha_{\alpha_i}, h_1\in \mathscr{R}^\alpha_{\tilde{\alpha_i}}} M_{1,1},\]
as our above calculations show that the $T_{x_1}$ and $T_{h_1}$ have no effect on left or right multiplication by $\hecke_\tau$. Therefore for our purposes, it suffices to show that $M_{1,1}$ is $(\symgp_{\tau_i}, \symgp_{\tau})$-projective as an $(\hecke_\tau, \hecke_\tau)$-module.

To do this, consider the vector space $N:= \langle T_x T_y T_{d_i} T_h : x\in \symgp_{\alpha_i}, y\in \symgp{\tau_i}, h\in \mathscr{R}^{\tau}_{\tilde{\tau_i}}\rangle$.
This is a $(\hecke_{\tau_i}, \hecke_\tau)$-bimodule, by similar calculations to those above. Looking at the bases of $N$ and $M_{1,1}$, we can see that:
\[M_{1,1}\cong \hecke_{\tau}\otimes_{\hecke_{\tau_i}} N\]
as $(\hecke_\tau, \hecke_\tau)$-bimodules. Thus $M_{1,1}$ is $(\symgp_{\tau_i}, \symgp_\tau)$-projective as a $(\hecke_\tau, \hecke_\tau)$-bimodule, hence the same holds for $\hecke_\mu T_{d_i} \hecke_\mu$.
\end{proof}
\begin{corollary}
\label{useful_corl}
In situation of Lemma~\ref{main_h_i}, let $M$ be a direct summand of $\hecke_\mu T_{d_i} \hecke_\mu$ as a $(\hecke_\mu, \hecke_\mu)$-bimodule. Then if $M$ is $(\symgp_\tau, \symgp_\tau)$-projective, it is also $(\symgp_{\tau_i}, \symgp_\tau)$-projective.
\end{corollary}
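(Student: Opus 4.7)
The plan is to combine the $(\hecke_\tau,\hecke_\tau)$-bimodule-level conclusion of Lemma~\ref{main_h_i} with the $(\hecke_\mu,\hecke_\mu)$-bimodule-level hypothesis on $M$, and to bridge the two using transitivity of relative projectivity (Corollary~\ref{transitivity}) applied to the chain of algebras $\hecku_{\tau_i,\tau}\subseteq\hecku_{\tau,\tau}\subseteq\hecku_{\mu,\mu}$.

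First I would transfer the conclusion of Lemma~\ref{main_h_i} down to $M$. Since $M$ is a direct summand of $\hecke_\mu T_{d_i}\hecke_\mu$ as an $(\hecke_\mu,\hecke_\mu)$-bimodule, it is a fortiori a summand as an $(\hecke_\tau,\hecke_\tau)$-bimodule. Lemma~\ref{main_h_i} says that $\hecke_\mu T_{d_i}\hecke_\mu$ is $(\symgp_{\tau_i},\symgp_\tau)$-projective as an $(\hecke_\tau,\hecke_\tau)$-bimodule, and relative projectivity is inherited by direct summands (via the equivalence of (a) and (c) in Theorem~\ref{higman}: if $M = M_1\oplus M_2$ is $A'$-projective, then $M_1 \mid M \otimes_{A'} A$, so $M_1$ is $A'$-projective by criterion (c)). Hence $M$ itself is $(\symgp_{\tau_i},\symgp_\tau)$-projective as an $(\hecke_\tau,\hecke_\tau)$-bimodule; equivalently, $\hecku_{\tau_i,\tau}$-projective as a left $\hecku_{\tau,\tau}$-module.

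The hypothesis states that $M$ is $\hecku_{\tau,\tau}$-projective as a left $\hecku_{\mu,\mu}$-module. Applying Corollary~\ref{transitivity} to left $\hecku$-modules with $A=\hecku_{\mu,\mu}$, $A'=\hecku_{\tau,\tau}$, $A''=\hecku_{\tau_i,\tau}$ then yields that $M$ is $\hecku_{\tau_i,\tau}$-projective as a left $\hecku_{\mu,\mu}$-module, which is precisely the statement that $M$ is $(\symgp_{\tau_i},\symgp_\tau)$-projective as an $(\hecke_\mu,\hecke_\mu)$-bimodule. The only real subtlety lies in the bookkeeping: one must carefully track whether each projectivity statement is being asserted at the $(\hecke_\tau,\hecke_\tau)$-level or the $(\hecke_\mu,\hecke_\mu)$-level, since Lemma~\ref{main_h_i} lives at the former while the hypothesis and desired conclusion live at the latter. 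Once this is kept straight, the proof is essentially a direct appeal to transitivity.
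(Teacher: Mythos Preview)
Your proof is correct and follows exactly the approach the paper takes: the paper's proof is the single line ``This follows from Lemma~\ref{main_h_i} and the bimodule analogue of Corollary~\ref{transitivity},'' and you have simply unpacked how these two ingredients combine via the chain $\hecku_{\tau_i,\tau}\subseteq\hecku_{\tau,\tau}\subseteq\hecku_{\mu,\mu}$. Your careful tracking of which level ($(\hecke_\tau,\hecke_\tau)$ versus $(\hecke_\mu,\hecke_\mu)$) each projectivity statement lives at is exactly the bookkeeping the one-line proof suppresses.
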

\begin{proof} This follows from Lemma~\ref{main_h_i} and the bimodule analogue of Corollary~\ref{transitivity}.
\end{proof}
We now introduce the following type of parabolic subgroup.
\begin{definition}
A parabolic subgroup $\symgp_\lambda\subseteq\symgp_n$ is \emph{fixed-point-free} if the corresponding composition $\lambda = (\lambda_1, \dots, \lambda_s) \vDash n$ has $\lambda_i > 1$ for all $1 \leq i \leq s$. 

Suppose that we have a composition $\gamma = (1^a, b)\vDash n$. We say a parabolic subgroup $\symgp_\lambda$ is a fixed-point-free subgroup of $\symgp_\gamma$ if the corresponding composition $\lambda = (1^a, \lambda_1, \dots, \lambda_s)$ has $\lambda_i > 1$ for all $1 \leq i \leq s$.
\end{definition}
This corresponds to the notion that no element of $\{1, \dots, n\}$ (or $\{a + 1, \dots, n\}$ in the second case) is fixed by all elements of $\symgp_\lambda$. 
Note that for any fixed-point-free subgroup $\symgp_\lambda$ of $\symgp_\tau$, we have $N_{\symgp_n}(\symgp_\lambda) \subseteq \symgp_\mu$. Hence by Theorem~\ref{green}, in this case, we have a bijection between $(\hecke_n, \hecke_n)$ and $(\hecke_\mu,\hecke_\mu)$-bimodules with vertex $(\symgp_\lambda, \symgp_\lambda)$.
\begin{theorem} 
\label{double_coset_vertices} Let $1\neq d_i\in \mathscr{D}_{\mu\mu}$, and $\symgp_\lambda$ a proper fixed-point-free parabolic subgroup of $\symgp_\tau$. Then no indecomposable summand of $\hecke_\mu T_{d_i}\hecke_\mu$ as an $(\hecke_\mu, \hecke_\mu)$-bimodule has vertex $(\symgp_\lambda, \symgp_\lambda)$.
\end{theorem}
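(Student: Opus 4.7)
The plan is to argue by contradiction. Suppose $M$ is an indecomposable summand of $\hecke_\mu T_{d_i}\hecke_\mu$ as a $(\hecke_\mu,\hecke_\mu)$-bimodule with vertex $(\symgp_\lambda,\symgp_\lambda)$. Using the chain $\symgp_\lambda\subseteq\symgp_\tau\subseteq\symgp_\mu$, I first apply Lemma~\ref{3.2i}(a) (with $\lambda_1=\lambda_2=\tau$ and $\sigma_1=\sigma_2=\mu$) to extract an indecomposable $(\hecke_\tau,\hecke_\tau)$-bimodule $P$ with vertex $(\symgp_\lambda,\symgp_\lambda)$ such that $P\mid M$, and hence $P\mid\hecke_\mu T_{d_i}\hecke_\mu$, as $(\hecke_\tau,\hecke_\tau)$-bimodules.

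Next I invoke Lemma~\ref{main_h_i}: as a $(\hecke_\tau,\hecke_\tau)$-bimodule, $\hecke_\mu T_{d_i}\hecke_\mu$ is $(\symgp_{\tau_i},\symgp_\tau)$-projective, and this property passes to the summand $P$. Since $P$ has vertex $(\symgp_\lambda,\symgp_\lambda)$ as a $(\hecke_\tau,\hecke_\tau)$-bimodule, Theorem~\ref{vertices} produces an element $x\in\symgp_\tau$ with $\symgp_\lambda^{x}\subseteq\symgp_{\tau_i}$ (the right-hand containment $\symgp_\lambda^{y}\subseteq\symgp_\tau$ is automatic).

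The final step is to rule out this containment by comparing pointwise-fixed sets. Because $\lambda=(1^a,\lambda_1,\dots,\lambda_s)$ has every $\lambda_j>1$, the nontrivial factors of $\symgp_\lambda$ together cover all of $\{a+1,\dots,a+de\}$, so the set of points fixed by every element of $\symgp_\lambda$ is exactly $\{1,\dots,a\}$. Since $x\in\symgp_\tau$ fixes $\{1,\dots,a\}$ pointwise, the conjugate $\symgp_\lambda^{x}$ likewise has fixed-point set of cardinality exactly $a$. However $\symgp_{\tau_i}$, with $\tau_i=(1^{a+i},de-i)$, fixes $\{1,\dots,a+i\}$ pointwise, so the containment $\symgp_\lambda^{x}\subseteq\symgp_{\tau_i}$ would force at least $a+i$ common fixed points. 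This is a contradiction as soon as $i\geq 1$, which holds because $d_i\neq 1$.

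The main obstacle, the $(\symgp_{\tau_i},\symgp_\tau)$-projectivity of $\hecke_\mu T_{d_i}\hecke_\mu$ as a $(\hecke_\tau,\hecke_\tau)$-bimodule, has already been handled in Lemma~\ref{main_h_i}; the rest is a bookkeeping argument with fixed-point sets. The only subtlety is to apply Lemma~\ref{3.2i}(a) first so that one works with the vertex of $P$ as a $(\hecke_\tau,\hecke_\tau)$-bimodule, where Lemma~\ref{main_h_i} applies directly, rather than with the vertex of $M$ as a $(\hecke_\mu,\hecke_\mu)$-bimodule.
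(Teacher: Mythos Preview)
Your proof is correct and follows essentially the same route as the paper: both use Lemma~\ref{main_h_i} to force a conjugate of $\symgp_\lambda$ into $\symgp_{\tau_i}$ and then derive a contradiction from the fixed-point-free hypothesis. The only cosmetic differences are that the paper stays at the $(\hecke_\mu,\hecke_\mu)$-level via Corollary~\ref{useful_corl} (transitivity) rather than descending to $(\hecke_\tau,\hecke_\tau)$ through Lemma~\ref{3.2i}(a), and it phrases the final contradiction via cycle types (an element of cycle type $\lambda_1\cdots\lambda_s$ cannot live in $\symgp_{\tau_i}$) instead of your fixed-point count.
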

\begin{proof}
Consider $M$ a direct summand of $\hecke_\mu T_{d_i} \hecke_\mu$ as a $(\hecke_\mu, \hecke_\mu)$-bimodule. If $M$ has vertex $(\symgp_\lambda, \symgp_\lambda)$, then as $\symgp_\lambda \subset \symgp_\tau$, we get that 
$M$ is $(\symgp_\tau, \symgp_\tau)$-projective by transitivity of induction. Corollary~\ref{useful_corl} tells us that $M$ is $(\symgp_{\tau_i}, \symgp_\tau)$-projective as a $(\hecke_\mu, \hecke_\mu)$-bimodule. However, $M$ has vertex $(\symgp_\lambda, \symgp_\lambda)$ which means that some conjugate of $\symgp_\lambda$ is contained in $\symgp_{\tau_i}$.  As $\symgp_{\lambda}$ is fixed-point-free in $\symgp_\tau$, it contains an element of cycle type $\lambda_1\dots\lambda_s$. No elements in $\symgp_{\tau_i}$ can have this cycle type as there are not enough indices, hence $M$ cannot have vertex $(\symgp_\lambda, \symgp_\lambda)$.
\end{proof}
\begin{corollary}
\label{brauer_existence}
Let $b$ be a block of $\hecke_\mu$ with vertex $(\symgp_\lambda, \symgp_\lambda)$, where $\symgp_\lambda$ is a fixed-point-free parabolic subgroup of $\symgp_\tau$. Then $b^{\hecke_n}$ exists. 
\end{corollary}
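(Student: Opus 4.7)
The plan is to pair the block decomposition of $\hecke_n$ with the double-coset decomposition of $\hecke_n$ as an $(\hecke_\mu,\hecke_\mu)$-bimodule, and use Theorem~\ref{double_coset_vertices} to pin $b$ down inside exactly one block $B$ of $\hecke_n$. Concretely, on the one hand $\hecke_n = \bigoplus_B B$ as $(\hecke_n, \hecke_n)$-bimodules, and hence a fortiori as $(\hecke_\mu, \hecke_\mu)$-bimodules. On the other hand, by Lemma~\ref{mu-mu_cosets},
\[
\hecke_n \;\cong\; \bigoplus_{k=0}^{\min(a, de)} \hecke_\mu T_{d_k} \hecke_\mu
\]
as $(\hecke_\mu, \hecke_\mu)$-bimodules.

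Next I would count how many times $b$ occurs as a summand of the right-hand side. The $k = 0$ term is $\hecke_\mu T_1 \hecke_\mu = \hecke_\mu$, whose decomposition into indecomposable $(\hecke_\mu, \hecke_\mu)$-bimodules is precisely the block decomposition of $\hecke_\mu$. Distinct blocks of $\hecke_\mu$ are pairwise non-isomorphic as bimodules (they are cut out by distinct primitive central idempotents), so $b$ contributes multiplicity exactly one in this piece. For each $k \geq 1$, Theorem~\ref{double_coset_vertices} applies (since $\symgp_\lambda$ is a proper fixed-point-free parabolic subgroup of $\symgp_\tau$) and ensures that no indecomposable summand of $\hecke_\mu T_{d_k}\hecke_\mu$ has vertex $(\symgp_\lambda, \symgp_\lambda)$. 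Since $b$ has vertex $(\symgp_\lambda, \symgp_\lambda)$, by Krull--Schmidt $b$ must occur with total multiplicity exactly one in the double-coset decomposition of $\hecke_n$.

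Comparing this count against $\hecke_n = \bigoplus_B B$, the Krull--Schmidt theorem then forces $b$ to divide exactly one block $B$ of $\hecke_n$ as an $(\hecke_\mu, \hecke_\mu)$-bimodule. This $B$ is by definition $b^{\hecke_n}$, so both existence and uniqueness of the Brauer correspondent are established.

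The heavy lifting has already been done in Theorem~\ref{double_coset_vertices}; what remains here is essentially Krull--Schmidt bookkeeping, and the only subtle input is the observation that distinct blocks of $\hecke_\mu$ are non-isomorphic as bimodules over themselves. I do not expect any serious obstacle beyond this, so the proof should be short.
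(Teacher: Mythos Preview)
Your proposal is correct and follows essentially the same route as the paper: decompose $\hecke_n$ via $(\symgp_\mu,\symgp_\mu)$-double cosets, observe that $b$ appears exactly once (in the $d_0=1$ piece $\hecke_\mu$) and, by Theorem~\ref{double_coset_vertices}, in no other piece, then conclude by Krull--Schmidt that a unique block of $\hecke_n$ contains $b$ on restriction. You are slightly more explicit than the paper about invoking Krull--Schmidt and about why distinct blocks of $\hecke_\mu$ are non-isomorphic as bimodules, but the argument is the same.
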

\begin{proof} Decomposing $\hecke_n$ as a $(\hecke_\mu, \hecke_\mu)$-bimodule using double cosets gives
\[\hecke_n = \hecke_\mu \oplus \bigoplus_{1\neq d\in \mathscr{D}^{(n)}_{\mu\mu}} \hecke_\mu T_d\hecke_\mu.\]
Now $b$ occurs once as a summand of $\hecke_\mu$, and does not appear as a direct summand of any $\hecke_\mu T_d \hecke_\mu$ for $d\neq 1$ by Theorem~\ref{double_coset_vertices}, as no indecomposable summands of this have the required vertex. Therefore $b$ occurs exactly once in this direct sum decomposition, so there must be a unique block of $\hecke_n$ which restricts to contain $b$.
\end{proof}
This finally lets us state our Brauer correspondence. Note that this is not as general as the Brauer correspondence stated in~\cite[Theorem 14.2]{alperin}, as we require $\symgp_\mu$ to have two parts, and need $\symgp_\lambda$ to be a fixed-point-free subgroup of $\symgp_\tau$. This is instead of only requiring $N_{\symgp_n}(\symgp_\lambda)\subseteq \symgp_\mu$ in the classical Brauer correspondence. Nevertheless, as we will show in the following sections, all blocks have vertices satisfying this condition, and thus it will give a complete characterisation of the vertices for the blocks of $\hecke_n$. 
\begin{theorem}[Brauer correspondence for Hecke algebras]
\label{new_brauer}
Let $n = a + de$, with $\mu = (a, de)$, $\tau = (1^a, de)$ and $\symgp_\lambda$ a fixed-point-free parabolic subgroup of $\symgp_\tau$. Then there is a one-to-one correspondence between blocks of $\hecke_\mu$ with vertex $(\symgp_\lambda,\symgp_\lambda)$ and blocks of $\hecke_n$ with the same vertex. 
\end{theorem}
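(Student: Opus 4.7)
The plan is to realise the bijection as a direct application of the Green correspondence (Theorem~\ref{green}) in the symmetric setup $\lambda_1 = \lambda_2 = \lambda$, $\mu_1 = \mu_2 = \mu$ and $\sigma_1 = \sigma_2 = (n)$. First I would verify the standing hypothesis~(\ref{key_green}): because $\symgp_\lambda$ is a fixed-point-free parabolic subgroup of $\symgp_\tau$, $N_{\symgp_n}(\symgp_\lambda) \subseteq \symgp_\mu$ (as noted just before Theorem~\ref{double_coset_vertices}). Hence the functors $f$ and $g$ of Theorem~\ref{green} give a bijection between indecomposable $(\hecke_n, \hecke_n)$-bimodules and indecomposable $(\hecke_\mu, \hecke_\mu)$-bimodules with vertex $(\symgp_\lambda, \symgp_\lambda) \in \setz$, and it remains to identify blocks among these.

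For the forward direction, fix a block $b$ of $\hecke_\mu$ with vertex $(\symgp_\lambda, \symgp_\lambda)$. Corollary~\ref{brauer_existence} guarantees that $b^{\hecke_n}$ exists, and by definition $b \mid b^{\hecke_n}$ as $(\hecke_\mu, \hecke_\mu)$-bimodules. Applying Corollary~\ref{maps_corol} with $M = g(b)$ (so that $f(M) = b$) and $U = b^{\hecke_n}$ promotes this to $g(b) \mid b^{\hecke_n}$ as $(\hecke_n, \hecke_n)$-bimodules. Since blocks of the symmetric algebra $\hecke_n$ are indecomposable as two-sided ideals, and hence as $(\hecke_n, \hecke_n)$-bimodules, we conclude $b^{\hecke_n} \cong g(b)$, so the Brauer correspondent has vertex $(\symgp_\lambda, \symgp_\lambda)$.

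For the reverse direction, take a block $B$ of $\hecke_n$ with vertex $(\symgp_\lambda, \symgp_\lambda)$ and form the Green correspondent $f(B)$, an indecomposable $(\hecke_\mu, \hecke_\mu)$-summand of $B$ with the same vertex. Restricting the decomposition
\[\hecke_n = \hecke_\mu \oplus \bigoplus_{1 \neq d \in \mathscr{D}^{(n)}_{\mu,\mu}} \hecke_\mu T_d \hecke_\mu\]
to a $(\hecke_\mu, \hecke_\mu)$-bimodule decomposition, the chain $f(B) \mid B \mid \hecke_n$ together with Theorem~\ref{double_coset_vertices} forbids $f(B)$ from appearing in any $\hecke_\mu T_d \hecke_\mu$ with $d \neq 1$. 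Thus $f(B) \mid \hecke_\mu$, and since each block of $\hecke_\mu$ is itself indecomposable as a bimodule, $f(B)$ coincides with some block $b$ of $\hecke_\mu$, which inherits the vertex $(\symgp_\lambda, \symgp_\lambda)$. Because $b = f(B) \mid B$ and the Brauer correspondent is unique when it exists, $b^{\hecke_n} = B$.

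The two constructions $b \mapsto b^{\hecke_n}$ and $B \mapsto f(B)$ are mutually inverse by the identifications $g(f(B)) \cong B$ and $f(g(b)) \cong b$ of Theorem~\ref{green}(c). The step I expect to require the most care is the indecomposability of blocks as bimodules over themselves, which underpins both identifications $b^{\hecke_n} \cong g(b)$ and $f(B) \cong b$; this is standard for finite-dimensional (symmetric) algebras via the centrally primitive idempotent decomposition, but deserves an explicit mention since the Green correspondence only speaks about indecomposable summands.
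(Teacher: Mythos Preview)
Your proposal is correct and follows essentially the same route as the paper: both arguments verify the normalizer condition from the fixed-point-free hypothesis, invoke Corollary~\ref{brauer_existence} for existence, identify $b^{\hecke_n}$ with the Green correspondent $g(b)$ via Theorem~\ref{maps}/Corollary~\ref{maps_corol}, and in the reverse direction use Theorem~\ref{double_coset_vertices} to force the relevant indecomposable summand of $B$ with vertex $(\symgp_\lambda,\symgp_\lambda)$ to land in $\hecke_\mu$ (the paper produces this summand via Lemma~\ref{3.2i}, you via $f(B)$, which amounts to the same thing). The only point to make explicit in your reverse direction is that $b^{\hecke_n}$ exists before asserting $b^{\hecke_n}=B$; this follows immediately from Corollary~\ref{brauer_existence} since you have already shown $b=f(B)$ has vertex $(\symgp_\lambda,\symgp_\lambda)$.
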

\begin{proof}
First let $b$ be a block of $\hecke_\mu$ with vertex $(\symgp_\lambda, \symgp_\lambda)$. Then $b^{\hecke_n}$ exists by Corollary~\ref{brauer_existence}. As $\symgp_\lambda\subseteq \symgp_\tau$ is fixed-point-free, we have $N_{\symgp_n}(\symgp_\lambda)\subseteq \symgp_\mu$. Hence we can use Theorem~\ref{green} to show that $b$ has a Green correspondent, and by Theorem~\ref{maps}, this Green correspondent must be $b^{\hecke_n}$. This correspondence gives us that $b^{\hecke_n}$ has the same vertex as $b$, and as the Green correspondence is a bijection, in particular the map $b\mapsto b^{\hecke_n}$ must be injective.

Now let $B$ be a block of $\hecke_{n}$ with vertex $(\symgp_\lambda, \symgp_\lambda)$. By Lemma~\ref{3.2i}, there is an indecomposable $(\hecke_\mu, \hecke_\mu)$-bimodule $N$ with vertex $(\symgp_\lambda, \symgp_\lambda)$, and $N \mid B$ as $(\hecke_\mu, \hecke_\mu)$-bimodules. Theorem~\ref{double_coset_vertices} tells us that $N$ must be a direct summand of $\hecke_\mu$ and hence is a block of $\hecke_\mu$. Therefore by Corollary~\ref{brauer_existence}, $N^{\hecke_n}$ exists, and by the first part of this proof, $N^{\hecke_n}=B$. This shows us that the map $b\mapsto b^{\hecke_n}$ is surjective onto blocks with vertex $(\symgp_\lambda, \symgp_\lambda)$, and hence defines the required one-to-one correspondence.
\end{proof}

In particular note that Brauer corresponding blocks are also Green correspondents in the sense of Theorem~\ref{green}.

\subsection{Finding Brauer correspondents}

Now we know they exist, we want to be able to identify the Brauer correspondent of a given block. We begin by proving a theorem which links Brauer correspondents to Green correspondents, similar to~\cite[Corollary 14.4]{alperin}. Throughout this section, we will denote the central idempotent of the block $b$ by $e_b$, and that of $B$ by $e_B$.
\begin{theorem}
Let $\mu\vDash n$ and $b$ a block of $\hecke_\mu$ whose Brauer correspondent $B = b^{\hecke_n}$ exists. Let $\lambda\vDash n$ with $N_{\symgp_n}(\symgp_\lambda)\subseteq\symgp_\mu$, and suppose $N$ is an indecomposable $\hecke_\mu$-module lying in $b$, with vertex $\symgp_\lambda$. Then $g(N)$, the Green correspondent of $N$, lies in $B$.
\end{theorem}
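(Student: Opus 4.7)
The plan is to show that $g(N)$ occurs as an indecomposable direct summand of $N\otimes_{\hecke_\mu}B$; this suffices because $N\otimes_{\hecke_\mu}B=(N\otimes_{\hecke_\mu}\hecke_n)e_B$ visibly lies in the block $B$.

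The starting point is the defining property of the Brauer correspondent: $B=b^{\hecke_n}$ gives $b\mid B$ as $(\hecke_\mu,\hecke_\mu)$-bimodules. Tensoring this divisibility on the left by $N$ over $\hecke_\mu$ yields $N\otimes_{\hecke_\mu}b\mid N\otimes_{\hecke_\mu}B$ as right $\hecke_\mu$-modules. Since $N$ lies in $b$ and $b=\hecke_\mu e_b$, the canonical multiplication map $N\otimes_{\hecke_\mu}b\to Ne_b=N$ is an isomorphism, so $N$ is a direct summand of $(N\otimes_{\hecke_\mu}B)|_{\hecke_\mu}$.

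Now decompose $N\otimes_{\hecke_\mu}B=\bigoplus_i P_i$ into indecomposable right $\hecke_n$-modules. Because restriction commutes with direct sums, some $P_{i_0}$ has $N\mid P_{i_0}|_{\hecke_\mu}$. The hypothesis $N_{\symgp_n}(\symgp_\lambda)\subseteq\symgp_\mu$ places $\symgp_\lambda$ in the module-level analogue of $\setz$, so the module Green correspondence of~\cite{du} is in force; in particular $f(g(N))\cong N$. The module analogue of Corollary~\ref{maps_corol} (whose proof transfers verbatim from the bimodule case, this being precisely the content of~\cite[Theorem 12.2]{alperin} in the classical setting) then forces $P_{i_0}\cong g(N)$, because $P_{i_0}$ is indecomposable and contains $N=f(g(N))$ in its restriction. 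Therefore $g(N)\mid N\otimes_{\hecke_\mu}B$, and $g(N)$ lies in $B$. The only delicate step is invoking a module-level version of Corollary~\ref{maps_corol}, which the excerpt itself states only for bimodules; but the uniqueness proof carries over unchanged to modules once the module Green correspondence has been set up.
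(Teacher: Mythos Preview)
Your argument is correct, and shares with the paper's proof the key observation that $N\cong N\otimes_{\hecke_\mu}b\mid (N\otimes_{\hecke_\mu}B)|_{\hecke_\mu}$. The difference lies in how the conclusion is drawn from this. The paper argues by contradiction: assuming $g(N)e_B=0$, it uses the Green decomposition $N\otimes_{\hecke_\mu}\hecke_n\cong g(N)\oplus Q$ from~\cite[Theorem~3.6]{du} to see that $N\otimes_{\hecke_\mu}B=Qe_B$, whose indecomposable summands (and hence, via Mackey, the summands of its restriction to $\hecke_\mu$) all have vertices in $\mathscr{X}$, contradicting $N\mid (N\otimes_{\hecke_\mu}B)|_{\hecke_\mu}$. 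You instead pick out an indecomposable $\hecke_n$-summand $P_{i_0}$ of $N\otimes_{\hecke_\mu}B$ whose restriction contains $N=f(g(N))$, and invoke the module analogue of Theorem~\ref{maps} to identify $P_{i_0}\cong g(N)$ directly. Your route is slightly cleaner and avoids the contradiction, at the cost of appealing to a module-level version of Theorem~\ref{maps} that the paper only states for bimodules; as you note, this is unproblematic since the proof of~\cite[Theorem~12.2]{alperin} transfers to the Hecke setting once the Green correspondence of~\cite{du} is in place. The paper's argument, by contrast, stays entirely within the basic Green correspondence and Mackey, which makes it marginally more self-contained.
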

\begin{proof}
Note first that the Green correspondent of $N$ exists by~\cite[Theorem 3.6]{du}. Thus
\[N \otimes_{\hecke_\mu} \hecke_n \cong g(N) \oplus Q\]
where $g(N)$ is indecomposable, has vertex $\symgp_\lambda$, and the indecomposable summands of $Q$ all have vertices that are strictly smaller than $\symgp_\lambda$. Suppose that $g(N)e_B = 0$. Then: 
\[N\otimes_{\hecke_\mu} B = (N\otimes_{\hecke_\mu}\hecke_n)e_B \cong g(N)e_B \oplus Qe_B = Qe_B\]
and hence each indecomposable summand of $N\otimes_{\hecke_\mu} B$ has vertex strictly smaller than $\symgp_\lambda$ as an $\hecke_n$-module. Thus its restriction down to $\hecke_\mu$ must also have vertices strictly smaller than $\symgp_\lambda$, by the Mackey formula. By the definition of Brauer correspondents, $B \cong b \oplus P$ as $(\hecke_\mu, \hecke_\mu)$-bimodules, for some $(\hecke_\mu, \hecke_\mu)$-bimodule $P$. Thus as $\hecke_\mu$-modules:
\[N \otimes_{\hecke_\mu} b \mid N\otimes_{\hecke_\mu} B.\]

However,
\[N\otimes_{\hecke_\mu} b = N \otimes_{\hecke_\mu} e_b \hecke_\mu = N e_b\otimes_{\hecke_\mu}\hecke_\mu\cong N e_b = N\]
since $N$ lies in the block $b$, so $N \mid N\otimes_{\hecke_\mu} B$. This is a contradiction, as the indecomposable summands of $N\otimes_{\hecke_\mu} B$ have vertices strictly smaller than $\symgp_\lambda$ as an $\hecke_\mu$-module. Hence $Me_B = M$ and so $M$ lies in the block $B$ of $\hecke_n$.
\end{proof}
Thus searching for Green correspondents of modules in our block $b$ gives a way to identify $b^{\hecke_n}$. We summarise this test in the following corollary.
\begin{corollary} 
\label{block_test}
Let $\mu = (a, de), \tau = (1^a, de), \gamma \vDash n$, and $\symgp_\lambda$ a fixed-point-free parabolic subgroup of $\symgp_\tau$. Suppose $\symgp_\gamma \subseteq N_{\symgp_n}(\symgp_\gamma)\subseteq \symgp_\mu$, and let $b$ be a block of $\hecke_\mu$ with vertex $(\symgp_\lambda, \symgp_\lambda)$. If $N$ is an indecomposable $\hecke_\mu$-module in $b$ with vertex $\symgp_\gamma$, and its Green correspondent $g(N)$ in $\hecke_n$ lies in $B$, then $B = b^{\hecke_n}$. 
\end{corollary}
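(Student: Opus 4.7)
The plan is that this corollary is almost a direct consequence of the preceding theorem together with Corollary~\ref{brauer_existence}; the main task is simply to check that each hypothesis lines up correctly.

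First, I would verify that $b^{\hecke_n}$ is defined. Since $\mu = (a, de)$ and $b$ is a block of $\hecke_\mu$ whose vertex $(\symgp_\lambda, \symgp_\lambda)$ has $\symgp_\lambda$ fixed-point-free in $\symgp_\tau$, Corollary~\ref{brauer_existence} applies directly and produces a unique block $b^{\hecke_n}$ of $\hecke_n$.

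Next, I would apply the preceding theorem with $\gamma$ playing the role of the symbol $\lambda$ there (here $\lambda$ is already reserved for the vertex of $b$). The hypotheses are satisfied: $b^{\hecke_n}$ exists by the first step, $N_{\symgp_n}(\symgp_\gamma) \subseteq \symgp_\mu$ is given, and $N$ is indecomposable, lies in $b$, and has vertex $\symgp_\gamma$. In particular the Green correspondent $g(N)$ is well-defined as an $\hecke_n$-module by Theorem~\ref{green} (or~\cite[Theorem 3.6]{du}). The theorem then concludes that $g(N)$ lies in $b^{\hecke_n}$.

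Finally, every indecomposable $\hecke_n$-module lies in exactly one block of $\hecke_n$; since $g(N)$ is assumed to lie in $B$ and we have just shown it lies in $b^{\hecke_n}$, we must have $B = b^{\hecke_n}$. The only real content beyond citation-chasing is ensuring that the fixed-point-free hypothesis on $\symgp_\lambda$ (needed to invoke Corollary~\ref{brauer_existence}) and the normalizer hypothesis on $\symgp_\gamma$ (needed for the Green correspondence on $N$) are independently in force, which is precisely what the corollary's assumptions provide.
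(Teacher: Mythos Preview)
Your proposal is correct and follows essentially the same approach as the paper: the paper's proof is equally brief, citing Theorem~\ref{new_brauer} (rather than Corollary~\ref{brauer_existence}) for the existence of $b^{\hecke_n}$ and then the preceding theorem to conclude that $g(N)$ lies in $b^{\hecke_n}$, with the uniqueness of the block containing $g(N)$ left implicit. Your extra care in explaining that $\gamma$ plays the role of $\lambda$ in the preceding theorem, and in spelling out the uniqueness step, is a welcome clarification but not a different argument.
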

\begin{proof} Theorem~\ref{new_brauer} guarantees that $b^{\hecke_n}$ exists, and by the preceding theorem, $g(N)$ lies in $b^{\hecke_n}$. 
\end{proof}
Before concluding this section, we present one last theorem to aid us when computing the vertex of a particular block; in effect this gives a lower bound on the possible vertex.
\begin{theorem} 
\label{blocks_and_modules} Let $B$ be a block of $\hecke_n$ with vertex $(\symgp_{\lambda_1},\symgp_{\lambda_2})$, and $M$ an indecomposable right $\hecke_n$-module that lies in $B$ with vertex $\symgp_\gamma$. Then there exists $x\in \symgp_n$ with $\symgp_\gamma^x\subseteq \symgp_{\lambda_2}$.
\end{theorem}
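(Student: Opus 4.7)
The plan is to deduce from the bimodule vertex of $B$ that $M$ must itself be relatively $\symgp_{\lambda_2}$-projective as a right $\hecke_n$-module; the required conjugacy statement will then fall out immediately from the defining property of the vertex of a module.

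First I would use the fact that $M$ lies in $B$ to write $M \cong M e_B \cong M \otimes_{\hecke_n} B$ as right $\hecke_n$-modules, where $e_B$ is the central idempotent of $B$. Next, since $B$ has vertex $(\symgp_{\lambda_1}, \symgp_{\lambda_2})$ as a $(\hecke_n, \hecke_n)$-bimodule, Higman's criterion for bimodules (as used in Proposition~\ref{argue_down}) gives that $B$ is a direct summand of $\hecke_n \otimes_{\hecke_{\lambda_1}} B \otimes_{\hecke_{\lambda_2}} \hecke_n$ as $(\hecke_n, \hecke_n)$-bimodules. Tensoring on the left with $M$ over $\hecke_n$ and collapsing $M \otimes_{\hecke_n} \hecke_n \cong M$ via associativity of tensor product then yields
\[M \cong M \otimes_{\hecke_n} B \,\Big|\, M \otimes_{\hecke_{\lambda_1}} B \otimes_{\hecke_{\lambda_2}} \hecke_n\]
as right $\hecke_n$-modules.

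Setting $U = M \otimes_{\hecke_{\lambda_1}} B$, viewed as a right $\hecke_{\lambda_2}$-module via the action of $\hecke_{\lambda_2}$ on $B$, the summand relation above becomes $M \mid U \otimes_{\hecke_{\lambda_2}} \hecke_n$. By criterion (c) of Theorem~\ref{higman}, this forces $M$ to be $\symgp_{\lambda_2}$-projective as a right $\hecke_n$-module. Since $\symgp_\gamma$ is the vertex of $M$, the defining property of the vertex (as recalled in the discussion following Theorem~\ref{higman}) then produces an element $x \in \symgp_n$ with $\symgp_\gamma^x \subseteq \symgp_{\lambda_2}$, as required.

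The proof is short and the main obstacle, such as it is, is purely bookkeeping: one must track carefully which sides of $B$ carry which actions, and verify the associativity isomorphism $M \otimes_{\hecke_n} (\hecke_n \otimes_{\hecke_{\lambda_1}} B \otimes_{\hecke_{\lambda_2}} \hecke_n) \cong M \otimes_{\hecke_{\lambda_1}} B \otimes_{\hecke_{\lambda_2}} \hecke_n$ of right $\hecke_n$-modules. No additional combinatorial input beyond Higman's criterion and the definition of a vertex is needed, so this theorem is essentially a formal consequence of the bimodule setup developed in Section 3.
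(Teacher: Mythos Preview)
Your proof is correct and follows essentially the same route as the paper: both arguments use Higman's criterion for the bimodule $B$ to exhibit $M$ as a summand of something induced from $\hecke_{\lambda_2}$, and then invoke the defining property of the vertex. The only difference is cosmetic: the paper first passes to a source module $N$ for $M$ and a source-type bimodule $Q$ for $B$ before assembling the same chain of summand relations, whereas you work directly with $M$ and $B$ via the form $B \mid \hecke_n \otimes_{\hecke_{\lambda_1}} B \otimes_{\hecke_{\lambda_2}} \hecke_n$ of Higman's criterion; your version is slightly cleaner but not materially different.
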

\begin{proof}
As $M$ has vertex $\symgp_\gamma$, there exists some $\hecke_\gamma$-module $N$ with $M \mid N \otimes_{\hecke_{\gamma}} \hecke_n$ by Lemma~\ref{3.2ii}. Multiplying both sides by $e_B$:
\[M = Me_B \mid N\otimes_{\hecke_{\gamma}} \hecke_n e_B = N\otimes_{\hecke_\gamma} B.\]
By Higman's criterion, there exists some $(\hecke_{\lambda_1},\hecke_{\lambda_2})$-bimodule $Q$ with $B\mid \hecke_n\otimes_{\hecke_{\lambda_1}} Q\otimes_{\hecke_{\lambda_2}} \hecke_n$ as $(\hecke_n, \hecke_n)$-bimodules. By restricting both sides, the same holds true as $(\hecke_\gamma, \hecke_n)$-bimodules. Combining this with our previous statement, as $N$ is a $\hecke_\gamma$-module, means that as $\hecke_n$-modules:
\[M \mid N\otimes_{\hecke_\gamma} (\hecke_n\otimes_{\hecke_{\lambda_1}} Q\otimes_{\hecke_{\lambda_2}}\hecke_n) \cong (N\otimes_{\hecke_\gamma} (\hecke_n\otimes_{\hecke_{\lambda_1}} Q))\otimes_{\hecke_{\lambda_2}}\hecke_n,\]
by associativity. Setting $V = N\otimes_{\hecke_\gamma}(\hecke_n\otimes_{\hecke_{\lambda_1}} Q)$, which is an $\hecke_{\lambda_2}$-module, we get
\[M \mid V \otimes_{\hecke_{\lambda_2}} \hecke_n,\]
hence $M$ is relatively $\symgp_{\lambda_2}$-projective, and thus some conjugate of $\symgp_\gamma$ lies inside $\symgp_{\lambda_2}$.
\end{proof}
\section{Specht modules}

The main aim of this section is to understand enough about Specht modules for $\hecke_n$ to use them when applying Corollary~\ref{block_test}. We recall Specht modules for Hecke algebras as in~\cite[\S3]{mathas}. Note that these Specht modules correspond to the dual of the Specht modules used by Dipper and James in~\cite{djblocks}.

Recall from~\cite[\S 3]{mathas} the following definitions and notation. For $\lambda \vdash n$, let $m_\lambda = \sum_{w\in \symgp_\lambda} T_w$. For $\mathfrak{s}, \mathfrak{t}\in \std{\lambda}$, denote $m_{\mathfrak{s}\mathfrak{t}} = T_{d(\mathfrak{s})^{-1}}m_\lambda T_{d(\mathfrak{t})}$, where $d(\mathfrak{s})$ is the minimal right coset representative sending the standard tableau $\mathfrak{t}^\lambda$ to $\mathfrak{s}$. By~\cite[Theorem 3.20]{mathas}, the following set is an $F$-basis for $\hecke_n$.
\[ \{m_{\mathfrak{st}} : \mathfrak{s}, \mathfrak{t} \in \std{\lambda}\text{ for some }\lambda \vdash n\} \] Let $\check{\hecke}^\lambda$ be the two-sided ideal of $\hecke_n$ with basis
\[\{ m_{\mathfrak{u}\mathfrak{v}} : \mathfrak{u}, \mathfrak{v}\in \std{\nu}\text{ for some }\nu \rhd \lambda\},\]
where $\rhd$ denotes the dominance ordering on partitions of $n$, and denote $m_\mathfrak{t} = m_{\mathfrak{t}^\lambda \mathfrak{t}} + \check{\hecke}^\lambda$, for $\mathfrak{t} \in \std{\lambda}$. Then the \emph{Specht module} $S^\lambda$ is the free $F$-module with basis 
\[\{m_\mathfrak{t}: \mathfrak{t}\in \std{\lambda}\}.\]
Rules for multiplication by elements of $\hecke_n$ in this module can be gained from taking~\cite[Corollary 3.4]{mathas} modulo $\check{\hecke}^\lambda$, and~\cite[Corollary 3.21]{mathas}. In particular, note that $S^{(n)}$ is the trivial module (all generators of $\hecke_n$ act by multiplication by $q$), and $S^{(1^{n})}$ is the sign module (all generators act as multiplication by $-1$).

As our goal is to use Specht modules to find blocks which are Brauer Correspondents, we need to know that the Specht modules we are looking at are indecomposable. 
\begin{lemma}
\label{indecompspecht}
Let $\lambda\vdash n$ be an $e$-restricted partition. Then $S^\lambda$ is an indecomposable $\hecke_n$-module.
\end{lemma}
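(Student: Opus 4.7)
The plan is to deduce indecomposability from the fact that the Specht module $S^\lambda$ has a simple head when $\lambda$ is $e$-restricted. The cellular algebra structure on $\hecke_n$ (as described in~\cite[\S 3]{mathas}) equips each $S^\lambda$ with a symmetric bilinear form whose radical $\operatorname{rad} S^\lambda$ is an $\hecke_n$-submodule, and the quotient $D^\lambda := S^\lambda/\operatorname{rad} S^\lambda$ is either zero or absolutely simple. The classification of simple $\hecke_n$-modules (see, for example,~\cite[Theorem 3.43]{mathas}) tells us that $D^\lambda \neq 0$ precisely when $\lambda$ is $e$-restricted.

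Given this, the first step is to record that $\operatorname{rad} S^\lambda$ is the unique maximal submodule of $S^\lambda$. This follows because $D^\lambda$ is simple, hence any proper submodule $M \subsetneq S^\lambda$ must be contained in $\operatorname{rad} S^\lambda$ (otherwise its image in the simple quotient $D^\lambda$ would be all of $D^\lambda$, forcing $M + \operatorname{rad} S^\lambda = S^\lambda$, which together with the Jacobson-radical interpretation of $\operatorname{rad} S^\lambda$ gives $M = S^\lambda$). Thus $S^\lambda$ has a simple head.

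The second step is the standard deduction that a module with a simple head is indecomposable. Suppose for contradiction that $S^\lambda = M \oplus N$ with both summands nonzero. Each of $M$ and $N$ has a nonzero head, so $\operatorname{head}(S^\lambda) \cong \operatorname{head}(M) \oplus \operatorname{head}(N)$ is a direct sum of two nonzero semisimple modules, contradicting the fact that $\operatorname{head}(S^\lambda) \cong D^\lambda$ is simple. Therefore $S^\lambda$ is indecomposable.

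There is no real obstacle here: the work has been done inside the cellular structure and the classification of simples, and the argument is a short application of the head-simple-implies-indecomposable principle. The only care needed is citing the correct form of the classification of simples for $\hecke_n$ from~\cite{mathas}, matching the conventions (right modules, and the Specht modules dual to those of Dipper--James) used throughout this paper.
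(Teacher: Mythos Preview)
Your proof is correct and reaches the same conclusion, but by a slightly different route than the paper. The paper's argument is a single line: it invokes \cite[Corollary~2.6]{grahamlehrer} to get $\Endo_{\hecke_n}(S^\lambda)\cong F$ directly from the cellular structure, and indecomposability follows because $F$ has no nontrivial idempotents. You instead argue via the simple head $D^\lambda$, using the classification of simples in \cite[Theorem~3.43]{mathas} and the principle that a module with simple head is indecomposable. Both approaches rest on the same cellular machinery and are comparably short; the paper's is marginally more self-contained because it sidesteps the need to justify that $\operatorname{rad}S^\lambda$ is the unique maximal submodule. On that point, your justification is slightly circular: you invoke ``the Jacobson-radical interpretation of $\operatorname{rad}S^\lambda$'' while trying to establish exactly that. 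The standard cellular argument is cleaner --- if some $m\in M$ satisfies $\langle m,s\rangle\neq 0$, then $m$ already generates $S^\lambda$, so every proper submodule lies in the radical of the form --- and this is what is actually proved in \cite{mathas} and \cite{grahamlehrer}.
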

\begin{proof} Using the cellular structure of $\hecke_n$, from~\cite[Corollary 2.6]{grahamlehrer} we get that $\Endo_{\hecke_n}(S^\lambda) \cong F$, and thus $S^\lambda$ is indecomposable.
\end{proof}
\begin{corollary}
Let $(1^n) \vdash n$, and $\tau \vdash n$ an $e$-core. Then both $S^\tau$ and $S^{(1^n)}$ are indecomposable $\hecke_n$-modules.
\end{corollary}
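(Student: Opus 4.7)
The plan is to reduce both assertions to the preceding Lemma~\ref{indecompspecht}, which guarantees indecomposability of $S^\lambda$ whenever $\lambda$ is $e$-restricted. So the task boils down to verifying that $(1^n)$ and any $e$-core $\tau$ are both $e$-restricted partitions (i.e. $\lambda_i - \lambda_{i+1} < e$ for every $i$, with $\lambda_{k+1} = 0$ by convention).

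The case $\lambda = (1^n)$ is immediate: the consecutive differences are all $0$ except the final one, which is $1$, and since the paper restricts to $e > 1$ we have $1 < e$. Hence $(1^n)$ is $e$-restricted and Lemma~\ref{indecompspecht} yields the claim for $S^{(1^n)}$.

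The case of $\tau$ an $e$-core is slightly more substantive, and I expect this to be the only nontrivial point. I would argue by contrapositive: suppose $\tau = (\tau_1, \dots, \tau_k)$ is not $e$-restricted, so $\tau_i - \tau_{i+1} \geq e$ for some $i$ (including the convention $\tau_{k+1} = 0$). Then the $e$ boxes in positions $(i, \tau_i - e + 1), (i, \tau_i - e + 2), \dots, (i, \tau_i)$ form a horizontal rim hook of length $e$; removing them leaves row $i$ of length $\tau_i - e \geq \tau_{i+1}$, so the resulting diagram is still the Young diagram of a partition. This contradicts the assumption that $\tau$ is an $e$-core. Thus every $e$-core is $e$-restricted, and Lemma~\ref{indecompspecht} gives indecomposability of $S^\tau$.

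Putting the two verifications together completes the corollary; no further machinery is needed beyond the $e$-restricted criterion already supplied by the preceding lemma.
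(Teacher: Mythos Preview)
Your proposal is correct and matches the paper's intent: the corollary is stated without proof precisely because both cases follow immediately from Lemma~\ref{indecompspecht} once one observes that $(1^n)$ and any $e$-core are $e$-restricted, exactly as you argue. Your contrapositive verification that an $e$-core must be $e$-restricted (via the removable horizontal $e$-rim hook at the end of row $i$) is the standard observation and fills in the only detail the paper leaves implicit.
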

When $\tau \vdash n$ is an $e$-core, we can say even more.
\begin{proposition}
If $\tau \vdash n$ is an $e$-core, then $S^\tau$ is projective.
\end{proposition}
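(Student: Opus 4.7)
The plan is to identify $S^\tau$ with the projective cover of its simple head and argue using block theory. First, observe that $\tau$ is $e$-regular: if $\tau$ had $e$ consecutive equal parts of length $k$, then the bottom-right box of the topmost such row (position $(i,k)$) would have arm length zero and leg length $e-1$, yielding a hook of length $e$ and contradicting the hypothesis that $\tau$ is an $e$-core. Consequently the simple $\hecke_n$-module $D^\tau := S^\tau/\mathrm{rad}\,S^\tau$ is defined. By Nakayama's conjecture as recalled in~\cite[Corollary 5.38]{mathas}, since $\tau$ has $e$-core equal to itself and $e$-weight zero, the block $B_{\tau,0}$ of $\hecke_n$ contains no partition other than $\tau$, so $S^\tau$ is the unique Specht module in this block.

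Next, I would invoke the BGG-type reciprocity coming from the cellular structure of $\hecke_n$: the projective cover $P(D^\tau)$ admits a Specht filtration with multiplicities $[P(D^\tau):S^\lambda] = [S^\lambda : D^\tau]$. Since the only Specht module in $B_{\tau,0}$ is $S^\tau$, and since the diagonal decomposition number $[S^\tau:D^\tau]$ equals $1$ by the general theory of cellular algebras, this filtration consists of a single layer equal to $S^\tau$. Hence $P(D^\tau)\cong S^\tau$, and in particular $S^\tau$ is projective.

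The main obstacle is citing the reciprocity in precisely the form needed. An equivalent and perhaps slicker route is to observe that the decomposition matrix of $B_{\tau,0}$ is the $1\times 1$ identity, so its Cartan matrix $C = D^{T}D$ is as well; this forces $B_{\tau,0}$ to be a simple matrix algebra over $F$, and every module over a simple $F$-algebra is projective, so $S^\tau$ is projective. Either route reduces the statement to the well-understood structure of weight-zero blocks, which are the Hecke-algebra analogue of blocks of defect zero.
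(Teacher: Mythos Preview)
Your argument is correct but more elaborate than the paper's, which is a one-liner: since $\tau$ is an $e$-core it lies in a block of $e$-weight zero, and such blocks are semi-simple by Erdmann--Nakano~\cite[Theorem~1.2]{erdmann}. Your route instead re-derives this from the cellular structure, using Nakayama's conjecture to see that $B_{\tau,0}$ contains a single Specht module and then either BGG reciprocity or the Cartan-matrix identity $C=D^{T}D=(1)$ to conclude; this is more self-contained at the cost of essentially re-proving a special case of the cited result. Two minor points: in the paper's (Mathas) conventions the simple modules are indexed by $e$-\emph{restricted} rather than $e$-regular partitions, and in your hook argument choosing the \emph{topmost} row of a run of equal parts can give leg length strictly greater than $e-1$ if more than $e$ parts coincide---neither affects validity, since $e$-cores are both $e$-restricted and $e$-regular, and taking the bottom $e$ rows of the run fixes the hook count.
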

\begin{proof} If $\tau$ is an $e$-core, then it lies in a block of $e$-weight zero, which is semi-simple by~\cite[Theorem 1.2]{erdmann}.
\end{proof}
This means that if $\tau \vdash a$ is an $e$-core, then $S^\tau \otimes S^{1^m}$ will be an indecomposable $\hecke_a\otimes \hecke_m$-module with vertex contained in $\symgp_{(1^a,m)}$. As such, $S^\tau \otimes S^{(1^m)}$ is a good candidate to use when applying Corollary~\ref{block_test}. 

\subsection{Restriction of Specht modules}

\begin{definition}
Let $\tau  = (\tau_1,\dots, \tau_s)\vdash a$ for some positive integer $a$, and let $m$ be another positive integer with $n = a + m$. Define the \emph{extended partition} $\tilde{\tau} = (\tau_1,\dots, \tau_s, 1^m) \vdash n$ and say that $\mathfrak{t}\in \std{\tilde{\tau}}$ has an \emph{$m$-tail} if the integers $\{a+1, \dots, a+m\}$ lie in the last $m$ rows of $\mathfrak{t}$. Finally define \[\std{\tau, m}:= \{ \mathfrak{t}\in \std{\tilde{\tau}} : \mathfrak{t}\text{ has an }m\text{-tail}\}.\]
\end{definition}
Fix $n = a + m$. We have an obvious bijection between $\std{\tau}$ and $\std{\tau, m}$ by adding or removing the $m$-tail. We denote this by sending $\mathfrak{t}$ to $\tilde{\mathfrak{t}}$. In fact, the following lemma is easy to verify.
\begin{lemma}
\label{dom_prop}
Let $\tau\vdash a$, and $\mathfrak{t}, \mathfrak{s} \in \std{\tau}$. Then
\[\mathfrak{t}\rhd \mathfrak{s} \iff \tilde{\mathfrak{t}} \rhd \tilde{\mathfrak{s}}.\]
Furthermore, if $\mathfrak{t}\in \std{\tau, m}$, and $\mathfrak{v}\in \std{\tilde{\tau}}$, then $\mathfrak{v} \rhd \mathfrak{t}$ implies $\mathfrak{v}\in \std{\tau, m}$.
\end{lemma}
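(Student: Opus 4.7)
The plan is to track how the shape of the initial subtableau $\mathfrak{t}_k$ (consisting of entries $\{1,\dots,k\}$) transforms when we pass from $\mathfrak{t}$ to $\tilde{\mathfrak{t}}$. The first observation is that, because the last $m$ rows of $\tilde{\tau}=(\tau_1,\dots,\tau_s,1^m)$ are singleton rows sharing the first column, standardness of $\tilde{\mathfrak{t}}$ forces the $m$-tail entries $a+1,\dots,a+m$ to be placed (in order) in rows $s+1,\dots,s+m$ respectively. Consequently, writing $\mathrm{sh}(\mathfrak{u}_k)$ for the shape of the subtableau of $\mathfrak{u}$ containing $\{1,\dots,k\}$, we get $\mathrm{sh}(\tilde{\mathfrak{t}}_k)=\mathrm{sh}(\mathfrak{t}_k)$ for $k\leq a$, and $\mathrm{sh}(\tilde{\mathfrak{t}}_k)=(\tau_1,\dots,\tau_s,1^{k-a})$ for $k>a$; in particular the latter depends only on $k$ and not on $\mathfrak{t}$.

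For the equivalence $\mathfrak{t}\rhd\mathfrak{s}\iff\tilde{\mathfrak{t}}\rhd\tilde{\mathfrak{s}}$, I would recall that dominance on standard tableaux is defined by comparing $\mathrm{sh}(\mathfrak{u}_k)$ and $\mathrm{sh}(\mathfrak{v}_k)$ under $\unrhd$ for every admissible $k$. By the observation above, for $k\leq a$ this comparison is identical to the one defining $\mathfrak{t}\rhd\mathfrak{s}$, while for $k>a$ both $\mathrm{sh}(\tilde{\mathfrak{t}}_k)$ and $\mathrm{sh}(\tilde{\mathfrak{s}}_k)$ coincide, so the comparison is automatic. This gives the equivalence.

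For the second claim, suppose $\mathfrak{t}\in\std{\tau,m}$ and $\mathfrak{v}\in\std{\tilde{\tau}}$ with $\mathfrak{v}\rhd\mathfrak{t}$. Since $\mathfrak{t}$ has an $m$-tail, $\mathrm{sh}(\mathfrak{t}_a)=\tau$, so $\mathrm{sh}(\mathfrak{v}_a)\unrhd\tau$. On the other hand $\mathrm{sh}(\mathfrak{v}_a)$ is a subdiagram of $\tilde{\tau}$ of size $a$, so its $j$th row is bounded by $\tau_j$ for $j\leq s$ (and by $1$ for $j>s$). These two sets of inequalities force equality in the partial sums up to each $k\leq s$, hence $\mathrm{sh}(\mathfrak{v}_a)=\tau$. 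Then the entries $a+1,\dots,n$ must lie in the singleton rows $s+1,\dots,s+m$ of $\mathfrak{v}$, giving $\mathfrak{v}\in\std{\tau,m}$.

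The whole argument is essentially routine bookkeeping with dominance; the only mildly subtle point is the final "dominance-forces-equality" step, but the rigid shape of $\tilde{\tau}$ below row $s$ makes it immediate. No new tools beyond the definitions of standard tableaux and the dominance order are needed.
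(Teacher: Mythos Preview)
Your argument is correct. The paper does not actually give a proof of this lemma; it simply says ``the following lemma is easy to verify'' and moves on. Your write-up supplies exactly the routine verification the paper has in mind: tracking the shapes $\mathrm{sh}(\mathfrak{u}_k)$ through the tilde construction for the first claim, and using the containment $\mathrm{sh}(\mathfrak{v}_a)\subseteq\tilde{\tau}$ together with $\mathrm{sh}(\mathfrak{v}_a)\unrhd\tau$ to force $\mathrm{sh}(\mathfrak{v}_a)=\tau$ for the second. There is nothing to compare---you have written out the omitted details, and they are fine.
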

Let $\tau \vdash a$, and $\mu = (a,m) \vDash n$. Then we can find an interesting submodule of $S^{\tilde{\tau}}$ as follows.
\begin{lemma}
\label{sub1}
Let $S^{\tau, m}$ be the vector space inside $S^{\tilde{\tau}}$ spanned by basis elements $m_{\mathfrak{t}}$ where $\mathfrak{t}\in \std{\tau, m}$. Then $S^{\tau, m}$ is an $\hecke_\mu$-submodule of $S^{\tilde{\tau}}$.
\end{lemma}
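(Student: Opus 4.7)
The plan is to show $S^{\tau,m}$ is closed under right multiplication by each generator of $\hecke_\mu = \hecke_a\otimes\hecke_m$; these generators are the $T_i$ with $i \in \{1,\dots,a-1\} \cup \{a+1,\dots,n-1\}$. The key preparatory observation is that for $\mathfrak{t}\in\std{\tau,m}$ the coset representative $d(\mathfrak{t}) \in \mathscr{R}_{\tilde{\tau}}^n$ actually lies in $\symgp_a$: standardness forces the tail entries $a+1,\dots,a+m$ to appear down column~$1$ of the singleton rows in the order $a+1,a+2,\dots,a+m$, exactly as they do in $\mathfrak{t}^{\tilde{\tau}}$, so the permutation taking $\mathfrak{t}^{\tilde{\tau}}$ to $\mathfrak{t}$ fixes $\{a+1,\dots,n\}$ pointwise. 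Consequently $T_{d(\mathfrak{t})}\in\hecke_a\subseteq\hecke_\mu$, and $m_\mathfrak{t} \equiv m_{\tilde{\tau}} T_{d(\mathfrak{t})} \pmod{\check{\hecke}^{\tilde{\tau}}}$.

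Next I would invoke the Murphy-basis action formulas from~\cite[Corollary~3.4, Corollary~3.21]{mathas} to write
\[m_\mathfrak{t} T_i \;=\; \alpha\, m_\mathfrak{t} \;+\; \beta\, m_{\mathfrak{t}\cdot s_i} \;+\; \sum_{\mathfrak{v}\rhd\mathfrak{t}} \gamma_\mathfrak{v}\, m_\mathfrak{v},\]
with the $\beta$-term present only when $\mathfrak{t}\cdot s_i$ is standard. Lemma~\ref{dom_prop} immediately places every dominance-higher tableau $\mathfrak{v}\rhd\mathfrak{t}$ into $\std{\tau,m}$, so it only remains to control the middle term. For $i<a$, the transposition $s_i$ permutes entries inside $\{1,\dots,a\}$, so whenever $\mathfrak{t}\cdot s_i$ is standard it still has an $m$-tail. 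For $i\geq a+1$, the entries $i,i+1$ sit in consecutive singleton rows of column~$1$ of $\mathfrak{t}$, so swapping them makes column~$1$ strictly decrease across those rows and $\mathfrak{t}\cdot s_i$ is never standard; the $\beta$-term drops out entirely. In either case $m_\mathfrak{t} T_i$ lies in $S^{\tau,m}$.

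The main obstacle I anticipate is the precise form of the Murphy-basis action formula; if a self-contained argument is preferred, one can instead expand $m_{\tilde{\tau}} T_{d(\mathfrak{t})} T_i$ in the $T_w$-basis of $\hecke_n$, group the result by minimal right coset representatives of $\symgp_{\tilde{\tau}}$ in $\symgp_n$, recognise those whose corresponding row-standard tableau is in fact column-standard as $m_\mathfrak{s}$ with $\mathfrak{s}\in\std{\tau,m}$ (since elements of $\symgp_a$ preserve the tail), and straighten the remaining row-standard-but-not-column-standard representatives via Garnir relations. By Lemma~\ref{dom_prop} these Garnir corrections contribute only $m_\mathfrak{v}$ with $\mathfrak{v}\rhd\mathfrak{t}$, which again lie in $\std{\tau,m}$.
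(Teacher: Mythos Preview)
Your proposal is correct and follows essentially the same route as the paper: both arguments check closure under each generator $T_i$ with $s_i\in\symgp_\mu$ by invoking the Murphy-basis action formulas \cite[Corollary~3.4, Corollary~3.21]{mathas} and then using Lemma~\ref{dom_prop} to absorb the dominance-higher correction terms. The only cosmetic difference is the case organisation: the paper splits according to whether $i$ and $i+1$ lie in the same row, the same column, or neither, whereas you split first by whether $i<a$ or $i\geq a+1$ and then let the schematic formula $m_\mathfrak{t}T_i=\alpha m_\mathfrak{t}+\beta m_{\mathfrak{t}\cdot s_i}+\sum_{\mathfrak{v}\rhd\mathfrak{t}}\gamma_\mathfrak{v}m_\mathfrak{v}$ handle the sub-cases uniformly; your observation that $d(\mathfrak{t})\in\symgp_a$ is not strictly needed here (the paper defers it to the proof of Theorem~\ref{sub2}), and the alternative Garnir-straightening paragraph you sketch is unnecessary since the cited results from \cite{mathas} already deliver exactly the formula you want.
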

\begin{proof}
We show that $S^{\tau,m}$ is closed under multiplication by $T_{i}$, for $s_i \in \symgp_\mu$. If $i$ and $i+1$ lie in the same column of $\mathfrak{t}$, then we can conclude using~\cite[Corollary 3.21]{mathas} and Lemma~\ref{dom_prop}.

Otherwise we split into cases, depending on whether or not $i$ and $i+1$ are in the same row of $\mathfrak{t}$. If they are in the same row, then $\mathfrak{s} = \mathfrak{t} s_i$ is not row standard, and hence by~\cite[Corollary 3.4]{mathas}, $m_\mathfrak{t} T_{i} = q m_\mathfrak{t}$, and hence lies in $S^{\tau, m}$. If $i$ and $i+1$ are not in the same row, then $\mathfrak{s}$ is standard, and contains an $m$-tail. Using~\cite[Corollary 3.4]{mathas} again:
\[m_{\mathfrak{t}}T_{i} = \begin{cases} m_{\mathfrak{s}} & \text{ if } \ell(d(\mathfrak{s})) > \ell(d(\mathfrak{t})), \\
								q m_{\mathfrak{s}} + (q-1)m_{\mathfrak{t}} & \text{ otherwise} \end{cases}\]
and so in both cases $m_{\mathfrak{t}}T_{i}\in S^{\tau, m}$.
\end{proof}
\begin{theorem}
\label{sub2}
As $\hecke_\mu$-modules, $S^{\tau, m} \cong S^\tau\otimes S^{(1^m)}$.
\end{theorem}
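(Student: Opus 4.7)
The plan is to construct an explicit $F$-linear isomorphism $\phi \colon S^\tau \otimes S^{(1^m)} \to S^{\tau, m}$ and verify it is $\hecke_\mu$-linear. Since $S^{(1^m)}$ is one-dimensional with basis $v := m_{\mathfrak{t}^{(1^m)}}$, I define $\phi(m_\mathfrak{s} \otimes v) := m_{\tilde{\mathfrak{s}}}$, where $\mathfrak{s} \mapsto \tilde{\mathfrak{s}}$ is the bijection $\std{\tau} \leftrightarrow \std{\tau,m}$ given by attaching the $m$-tail. Both sides have bases indexed by $\std{\tau}$, so $\phi$ is manifestly a vector-space isomorphism; the real content is $\hecke_\mu$-equivariance.

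Two observations do most of the work. First, $\symgp_{\tilde{\tau}} = \symgp_\tau$ (the trailing parts of size one contribute trivial factors), so $m_{\tilde{\tau}} = m_\tau$, and under $\symgp_a \hookrightarrow \symgp_n$ the coset representatives satisfy $d(\tilde{\mathfrak{s}}) = d(\mathfrak{s})$, because $d(\tilde{\mathfrak{s}})$ is forced to fix each of $a+1, \dots, a+m$ pointwise. Second, in $\tilde{\mathfrak{s}}$ the entries $a+1, \dots, a+m$ occupy consecutive singleton rows in column one, so $a+j$ and $a+j+1$ lie in the same column of $\tilde{\mathfrak{s}}$ for every $1 \le j < m$.

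The algebra $\hecke_\mu \cong \hecke_a \otimes \hecke_m$ is generated by $T_i$ with $1 \le i < a$ (first factor) and $T_{a+j}$ with $1 \le j < m$ (corresponding to the generator $T_j$ of $\hecke_m$). For $T_i$ with $1 \le i < a$ the positions of $i, i+1$ are identical in $\mathfrak{s}$ and $\tilde{\mathfrak{s}}$, and the coset representatives coincide by the first observation. Applying \cite[Corollary~3.4]{mathas} (modulo $\check{\hecke}^{\tilde{\tau}}$) together with \cite[Corollary~3.21]{mathas} to both sides, the action formulas for $m_\mathfrak{s} T_i$ and $m_{\tilde{\mathfrak{s}}} T_i$ agree term by term under the bijection, and Lemma~\ref{dom_prop} guarantees that every resulting tableau still has an $m$-tail, so the answer lies in $S^{\tau,m}$. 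For $T_{a+j}$, the second observation combined with \cite[Corollary~3.21]{mathas} yields $m_{\tilde{\mathfrak{s}}} T_{a+j} = -m_{\tilde{\mathfrak{s}}}$, which matches $m_\mathfrak{s} \otimes (v T_j) = -m_\mathfrak{s} \otimes v$ since every generator acts as $-1$ on the sign module.

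The main obstacle, such as it is, is purely bookkeeping: the careful identification of $\hecke_\mu$ with $\hecke_a \otimes \hecke_m$ via the matching of indices, and a patient case-split through Mathas's action formulas. The underlying idea is simply that Murphy-style action on $m_\mathfrak{t}$ is local to the positions of $i$ and $i+1$, and the $m$-tail neither disturbs these positions for $s_i \in \symgp_a$ nor contributes anything beyond the sign for $s_i \in \symgp_m$.
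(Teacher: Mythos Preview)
Your approach is the same as the paper's: define the explicit bijection $\phi$ and verify $\hecke_\mu$-linearity generator by generator. The treatment of the $\symgp_a$-generators is essentially correct, resting as it does on your observation that $m_{\tilde\tau}=m_\tau$ and $d(\tilde{\mathfrak s})=d(\mathfrak s)$; the paper spells out one extra bookkeeping detail you leave implicit, namely that the same-column coefficients $r_{\mathfrak v}$ from \cite[3.21]{mathas} agree in $S^\tau$ and $S^{\tilde\tau}$ because basis elements of $\check{\hecke}^\tau$, viewed inside $\hecke_n$, become basis elements of $\check{\hecke}^{\tilde\tau}$.

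The genuine gap is in the tail case. \cite[Corollary~3.21]{mathas} does \emph{not} assert $m_{\tilde{\mathfrak s}} T_{a+j} = -m_{\tilde{\mathfrak s}}$; it only gives
\[
m_{\tilde{\mathfrak s}} T_{a+j} = -m_{\tilde{\mathfrak s}} + \sum_{\mathfrak v \rhd \tilde{\mathfrak s}} r_{\mathfrak v}\, m_{\mathfrak v}
\]
with unspecified coefficients $r_{\mathfrak v}$, and Lemma~\ref{dom_prop} tells you only that each such $\mathfrak v$ lies in $\std{\tau,m}$, not that the sum vanishes. The paper closes this gap by a direct calculation: since $d(\tilde{\mathfrak s}) \in \symgp_a$ commutes with $T_{a+j}$, one has $m_{\tilde\tau} T_{d(\tilde{\mathfrak s})}(1+T_{a+j}) = m_{\tilde\tau}(1+T_{a+j}) T_{d(\tilde{\mathfrak s})} = m_\nu T_{d(\tilde{\mathfrak s})}$, where $\nu$ is obtained from $\tilde\tau$ by replacing the two relevant singleton parts by a single part of size~$2$. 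Since the partition $\lambda$ obtained by sorting $\nu$ strictly dominates $\tilde\tau$, this element lies in $\check{\hecke}^{\tilde\tau}$ via \cite[Lemma~3.10]{mathas}, whence $m_{\tilde{\mathfrak s}} T_{a+j} = -m_{\tilde{\mathfrak s}}$ on the nose. You need either this argument or an equivalent one; the bare citation of \cite[Corollary~3.21]{mathas} is not enough.
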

\begin{proof}
Let $\{m_{\tilde{\mathfrak{t}}}: \mathfrak{t}\in \std{\tau}\}$ be our standard basis of $S^{\tau,m}$, and $\{n_\mathfrak{t}\otimes \epsilon:\mathfrak{t}\in \std{\tau}\}$ be the basis of $S^\tau\otimes S^{(1^m)}$ gained from taking the standard basis of $S^\tau$ and tensoring with single basis element $\epsilon$ of $S^{(1^m)}$. Define a map $\phi:S^\tau \otimes S^{(1^m)}\to S^{\tau,m}$ by $\phi:n_\mathfrak{t}\otimes \epsilon \mapsto m_{\tilde{\mathfrak{t}}}$ extended linearly. To show $\phi$ is a $\hecke_\mu$-module isomorphism, it suffices to show that the map is a $\hecke_\mu$-module homomorphism, i.e.\ it suffices to show that:
\[\phi((n_\mathfrak{t}\otimes \epsilon)T_{i}) = \phi(n_\mathfrak{t}\otimes \epsilon)T_{i}\]
for all $s_i = (i,i+1)\in \symgp_\mu$.

First suppose that $i$ and $i+1$ are in the same row of $\mathfrak{t}^{\tau}$, (so necessarily $s_i\in \symgp_a$). Then $(n_\mathfrak{t}\otimes \epsilon)T_{i} =  n_\mathfrak{t}T_{i}\otimes \epsilon$. In $\hecke_a$, we have:
\[ m_{\mathfrak{t}^\tau\mathfrak{t}}T_i = m_{\tau}T_{d(\mathfrak{t})}T_i = qm_{\tau}T_{d(\mathfrak{t})} = qm_{\mathfrak{t}^\tau\mathfrak{t}} \]
by~\cite[Corollary 3.4]{mathas}. Thus when taken modulo $\check{\hecke}^\tau$ we get that $n_{\mathfrak{t}} T_i = qn_{\mathfrak{t}}$. By the same reasoning, $m_{\tilde{\mathfrak{t}}}T_{i} = qm_{\tilde{\mathfrak{t}}}$ when $i$ and $i+1$ are in the same row, and thus
\[\phi((n_\mathfrak{t}\otimes \epsilon)T_{i}) = \phi(n_\mathfrak{t}\otimes \epsilon)T_{i}.\]

Now suppose $i$ and $i+1$ are not in the same column, and are not in the same row (again  we must have $(i,i+1)\in \symgp_a$). Using~\cite[Corollary 3.4]{mathas}, we get that $\phi((n_\mathfrak{t}\otimes \epsilon)T_{i}) = \phi(n_\mathfrak{t}\otimes \epsilon)T_{i}$, since $\mathfrak{s} = \mathfrak{t}s_i$ is standard, and $\tilde{\mathfrak{s}} = \tilde{\mathfrak{t}}s_i$. 

It remains to deal with the case where $i$ and $i+1$ are in the same column, and we split into further cases based on whether $(i,i+1)\in \symgp_a$ or $(i,i+1)\in \symgp_m$. 
\begin{itemize}
\item First suppose that $(i,i+1)\in \symgp_a$. Note that as elements of $\hecke_n$, we have that $m_\tau = m_{\tilde{\tau}}$ and $T_{d(\mathfrak{t})} = T_{d(\tilde{\mathfrak{t}})}$. So using~\cite[Proposition 3.21]{mathas}, we have that in $\hecke_a$:
\[m_{\mathfrak{t}^\tau\mathfrak{t}}T_{i} \equiv -m_{\mathfrak{t}^\tau\mathfrak{t}} + \sum_{\mathfrak{v}\rhd \mathfrak{t}}r_{\mathfrak{v}}m_{\mathfrak{t}^\tau\mathfrak{v}} \mod{\check{\hecke}^\tau}.\]
Now if $m_{\mathfrak{u}\mathfrak{w}}$ is a basis element of $\check{\hecke}^\tau$, for $\mathfrak{u},\mathfrak{w}\in \std{\lambda}$ for some $\lambda \rhd \tau$, then
\[m_{\mathfrak{u}\mathfrak{w}} = m_{\tilde{\mathfrak{u}}\tilde{\mathfrak{w}}}\]
is a basis element of $\check{\hecke}^{\tilde{\tau}}$ as we know that $\mu\rhd \tau \implies \tilde{\mu}\rhd\tilde{\tau}$. Similarly the fact that $\mathfrak{v}\rhd \mathfrak{t}\iff \tilde{\mathfrak{v}}\rhd \tilde{\mathfrak{t}}$ gives us that:
\[m_{\mathfrak{t}^{\tilde{\tau}}\tilde{\mathfrak{t}}}T_{i} \equiv -m_{\mathfrak{t}^{\tilde{\tau}}\tilde{\mathfrak{t}}} + \sum_{\tilde{\mathfrak{v}}\rhd \tilde{\mathfrak{t}}}r_{\mathfrak{v}}m_{\mathfrak{t}^{\tilde{\tau}}\tilde{\mathfrak{v}}} \mod{\check{\hecke}^{\tilde{\tau}}}.\]
Thus again multiplication is the same in both modules.
\item Finally when $i$ and $i+1$ both lie in the $m$-tail (so $T_{d(\mathfrak{t})}$ commutes with $T_{i}$):
\[\phi((n_\mathfrak{t}\otimes \epsilon)T_{i}) = \phi(n_\mathfrak{t}\otimes(\epsilon T_{i})) = \phi(-n_\mathfrak{t}\otimes \epsilon) = -m_{\tilde{\mathfrak{t}}}.\]
So it suffices to show that $m_{\tilde{\mathfrak{t}}}T_{i} = -m_{\tilde{\mathfrak{t}}}$, i.e.\ $m_{\tilde{\tau}}T_{d(\mathfrak{\tilde{t}})}(1+T_{i})\in \check{\hecke}^{\tilde{\tau}}$. Writing $\tilde{\tau} = (\tau_1, \dots, \tau_s, 1^l, 1^m)$ where each $\tau_i > 1$, we have that:
\begin{align*}
m_{\tilde{\tau}}T_{d(\mathfrak{\tilde{t}})}(1+T_{i}) &= \left(\sum_{w\in \symgp_{\tilde{\tau}}}T_w\right)(1+T_{i})T_{d(\mathfrak{\tilde{t}})} \\
&= \left(\sum_{w\in \symgp_\nu}T_w\right)T_{d(\mathfrak{\tilde{t}})} = m_\nu T_{d(\mathfrak{\tilde{t}})},\end{align*}
where $\nu$ is the composition of $n$ given by:
\[\nu = (\tau_1, \dots, \tau_s, 1^{l+(i-a)-1}, 2, 1^{m-(i-a)-1}).\]
Let $\lambda = (\tau_1, \dots, \tau_s, 2, 1^{l+m-2})$, the partition of $n$ gained by reordering $\nu$. As $m_\nu = m_{\mathfrak{t}^\nu\mathfrak{t}^\nu} + \check{\hecke}^{\tilde{\tau}}$, we can apply~\cite[Lemma 3.10]{mathas} to write $m_{\mathfrak{t}^\nu\mathfrak{t}^\nu}$ as an $F$-linear combination of elements of the form $m_{\mathfrak{u}\mathfrak{v}}$ where $\mathfrak{u}, \mathfrak{v}\in \std{\lambda}$. Since $\lambda \rhd \tilde{\tau}$, these elements lie in $\check{\hecke}^{\tilde{\tau}}$, and hence again by~\cite[Lemma 3.10]{mathas}, $m_\nu \in\check{\hecke}^{\tilde{\tau}}$. Thus $m_{\tilde{\tau}}T_{d(\mathfrak{\tilde{t}})}(1+T_{i})\in \check{\hecke}^{\tilde{\tau}}$, and therefore in $S^{\tilde{\tau}}$, $m_{\tilde{\mathfrak{t}}}T_{i} = -m_{\tilde{\mathfrak{t}}}$.
\end{itemize}
So in all possible cases we have shown that $\phi((n_\mathfrak{t}\otimes \epsilon)T_{i}) = \phi(n_\mathfrak{t}\otimes \epsilon)T_{i}$, and hence $S^{\tau, m} \cong S^\tau\otimes S^{1^m}$ as $\hecke_\mu$-modules.
\end{proof}
Recall the following version of the Littlewood--Richardson rule from~\cite[13.7]{goldschmidt}. Let $\pi \vdash n$, $n = a + m$, $\mu = (a,m)\vDash n$, and suppose that $\hecke_n$ is semi-simple. Then as $\hecke_\mu$-modules:
\[S^\pi = \bigoplus_{\lambda, \nu} (S^\lambda \otimes S^\nu)^{\oplus c^\pi_{\lambda\nu}},\]
where the sum is over all $\lambda \vdash a$ and $\nu\vdash m$, and $c^\pi_{\lambda\nu}$ are the Littlewood--Richardson coefficients for $\symgp_n$. Our ultimate goal in this section is to show that as $\hecke_\mu$-modules, $S^\tau\otimes S^{(1^m)}$ is a direct summand of $S^{\tilde{\tau}}$ for any Hecke algebra, not just the semi-simple ones. Computing the relevant Littlewood--Richardson coefficients with~\cite[Theorem 4.94]{sagan} gives us this result when $\hecke_n$ is semi-simple.
\begin{lemma}
\label{lrrcoeffs}
Let $\tau\vdash a$ and $\tilde{\tau} \vdash a + m$ as before. Then for $\nu \vdash m$:
\[c^{\tilde{\tau}}_{\tau\nu} = \begin{cases} 1 & \text{ if }\nu = (1^m), \\ 0 & \text{otherwise.} \end{cases}\]
\end{lemma}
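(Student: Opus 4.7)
The plan is to directly apply the Littlewood--Richardson rule as stated in~\cite[Theorem 4.94]{sagan}, which asserts that $c^{\tilde{\tau}}_{\tau\nu}$ equals the number of semistandard skew tableaux of shape $\tilde{\tau}/\tau$ and content $\nu$ whose reverse reading word is a lattice word. So the first step is to identify the skew shape $\tilde{\tau}/\tau$ explicitly.

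Since $\tilde{\tau} = (\tau_1, \dots, \tau_s, 1^m)$ is obtained from $\tau$ by appending $m$ rows each of length one, the skew shape $\tilde{\tau}/\tau$ consists of exactly $m$ boxes, one in each of the rows $s+1, s+2, \dots, s+m$, all sitting in the first column. In other words, $\tilde{\tau}/\tau$ is a single vertical strip of length $m$.

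Next I would count the relevant tableaux. A semistandard filling of a single column requires the entries to be strictly increasing from top to bottom, so every letter can be used at most once. Hence a semistandard filling with content $\nu = (\nu_1, \dots, \nu_k)$ can exist only if $\nu_i \leq 1$ for every $i$, i.e.\ only if $\nu = (1^m)$. In this case the filling is uniquely determined: the entries must be $1, 2, \dots, m$ read top to bottom. Its reverse reading word (right-to-left, top-to-bottom) is $1, 2, \dots, m$, which is a lattice word. Therefore there is exactly one such tableau when $\nu = (1^m)$, and none otherwise.

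Combining the two cases yields
\[c^{\tilde{\tau}}_{\tau\nu} = \begin{cases} 1 & \text{if } \nu = (1^m), \\ 0 & \text{otherwise,}\end{cases}\]
as required. There is no real obstacle here; the entire content of the lemma is the observation that appending $1^m$ to $\tau$ produces a skew shape which is a single column of length $m$, after which the LR rule forces both the content and the filling.
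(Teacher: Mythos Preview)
Your proof is correct and follows exactly the approach the paper indicates: the paper does not give a detailed proof but simply states that the result follows by computing the relevant Littlewood--Richardson coefficients via~\cite[Theorem 4.94]{sagan}, and you have carried out precisely that computation. The key observation that $\tilde{\tau}/\tau$ is a single column of length $m$, forcing the content to be $(1^m)$ with a unique lattice-word filling, is the intended argument.
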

We now tackle the general case.
\begin{theorem} \label{specht_rest}
Let $n = a + m$, $\tau \vDash a$ an $e$-core and $\mu = (a,m)\vDash n$. Then as $\hecke_\mu$-modules:
\[S^\tau\otimes S^{(1^m)} \mid S^{\tilde{\tau}}.\]
\end{theorem}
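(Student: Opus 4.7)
The plan is to upgrade the $\hecke_\mu$-submodule inclusion $S^{\tau,m}\hookrightarrow S^{\tilde{\tau}}$ furnished by Lemma~\ref{sub1} together with Theorem~\ref{sub2} into a direct summand, by cutting with a suitable central idempotent of $\hecke_\mu$ and then pinning down equality via a dimension count.

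Since $\tau$ is an $e$-core, the block $B_{\tau,0}$ of $\hecke_a$ containing $S^\tau$ has $e$-weight zero, hence is semisimple by~\cite[Theorem 1.2]{erdmann}, and by Nakayama's conjecture $\tau$ is the only partition of $a$ in this block. Let $e_\tau\in\hecke_a$ denote the block idempotent of $B_{\tau,0}$, and set $e := e_\tau\otimes 1\in \hecke_a\otimes \hecke_m=\hecke_\mu$. Then $e$ is central in $\hecke_\mu$, so $S^{\tilde{\tau}}\cdot e$ is automatically an $\hecke_\mu$-direct summand of $S^{\tilde{\tau}}|_{\hecke_\mu}$. Because $e_\tau$ acts as the identity on $S^\tau$, it acts as the identity on $S^{\tau,m}\cong S^\tau\otimes S^{(1^m)}$, yielding $S^{\tau,m}\subseteq S^{\tilde{\tau}}\cdot e$. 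It remains only to see this containment is an equality.

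To conclude, I would compute $\dim(S^{\tilde{\tau}}\cdot e)$ by invoking a Littlewood--Richardson filtration of $S^{\tilde{\tau}}|_{\hecke_\mu}$, whose successive quotients are of the form $S^\lambda\otimes S^\nu$ for $\lambda\vdash a$, $\nu\vdash m$, with multiplicities $c^{\tilde{\tau}}_{\lambda\nu}$, and valid over arbitrary fields. The trace of $e$ on such a factor is $\dim(S^\lambda\otimes S^\nu)$ when $\lambda=\tau$ and zero otherwise. Summing and applying Lemma~\ref{lrrcoeffs},
\[\dim(S^{\tilde{\tau}}\cdot e) = \sum_{\nu\vdash m}c^{\tilde{\tau}}_{\tau,\nu}\dim(S^\tau\otimes S^\nu) = \dim(S^\tau\otimes S^{(1^m)}) = \dim S^{\tau,m},\]
so the inclusion is forced to be equality and $S^{\tau,m}$ is a direct summand.

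The chief obstacle is justifying the Specht (Littlewood--Richardson) filtration of $S^{\tilde{\tau}}|_{\hecke_\mu}$ in the generality needed, i.e.\ over an arbitrary field. For the symmetric group this is classical work of James, and for Hecke algebras the analogous statement is available through the cellular structure of $\hecke_\mu$ together with Dipper--James/Murphy-style branching results. Should such machinery be unavailable, an alternative route is to lift Specht modules from a generic (semisimple) specialization via an integral form of $\hecke_n$ and argue by reduction; either way, the combinatorial identity in Lemma~\ref{lrrcoeffs} is the only input that makes the dimension count collapse to the desired value.
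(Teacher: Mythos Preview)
Your proposal follows the same overall architecture as the paper's proof: cut $S^{\tilde{\tau}}$ by the central idempotent $e_\tau\otimes 1$ (where $e_\tau$ is the block idempotent of $B_{\tau,0}$), observe via Lemma~\ref{sub1} and Theorem~\ref{sub2} that $S^{\tau,m}\cong S^\tau\otimes S^{(1^m)}$ lands inside the resulting summand $S^{\tilde{\tau}}\cdot e$, and then establish equality by a dimension count. The only divergence is in how that dimension count is executed.

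Your primary route---computing $\dim(S^{\tilde{\tau}}\cdot e)$ from a Littlewood--Richardson Specht filtration of $S^{\tilde{\tau}}|_{\hecke_\mu}$ over $F$, using that multiplication by a central idempotent is exact so that each subquotient $S^\lambda\otimes S^\nu$ contributes its full dimension or zero according as $\lambda=\tau$ or not---is valid once such a filtration is available, and you rightly flag this as the nontrivial input. (One caution: phrase the argument via exactness rather than literal traces, since the trace of $e$ as an element of $F$ only records the dimension modulo $p$.) The paper instead takes exactly what you call the ``alternative route'': it passes to the integral form $\hecke_n(\mathcal{O},x)$ with $\mathcal{O}=F[x]_{(x-q)}$, invokes the Dipper--James result~\cite[Theorems~5.3,~5.4]{djblocks} that block idempotents lift to $\mathcal{O}$ and reduce to the block idempotents over $F$, applies the Littlewood--Richardson rule over the semisimple field of fractions $K$ to get $S^{\tilde{\tau}}_K(H^\tau\otimes 1)=S^\tau_K\otimes S^{(1^m)}_K$, and then reads off the $F$-dimension from the $\mathcal{O}$-rank of the lattice $S^{\tilde{\tau}}_{\mathcal{O}}(H^\tau\otimes 1)$. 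This sidesteps any need for an LR filtration in positive characteristic at the cost of the lifting machinery. Both approaches collapse to the same combinatorial input, Lemma~\ref{lrrcoeffs}, at the end; yours is more direct if the filtration is granted, while the paper's is self-contained within the references already in play.
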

\begin{proof} 
Let $\mathcal{O}$ be the localization of $F[x]$ at the maximal ideal generated by $(x-q)$ and $K$ the field of fractions of $\mathcal{O}$. Consider three related Hecke algebras $\hecke_a(K, x)$, $\hecke_a(\mathcal{O}, x)$ and $\hecke_a(F, q)$. As $K$ is a field, and $x$ has quantum characteristic zero, (and thus each partition is its own $0$-core), by~\cite[Corollary 2.21]{mathas}, $\hecke_a(K,x)$ is semi-simple. As in~\cite[\S 5]{djblocks}, we have an inclusion homomorphism between $\hecke_a(\mathcal{O},x)$ and $\hecke_a(K,x)$, induced by the inclusion of $\mathcal{O}$ into $K$, and a map: 
\[\overline{{}\cdot{}}: \hecke_a(\mathcal{O},x)\to \hecke_a(F,q)\]
induced by $x \mapsto q$. We use the notation $S^\nu_K$ to mean the Specht module corresponding to $\nu$ in $\hecke_a(K,x)$, and similarly for $\mathcal{O}$ and $F$. Following the notation in~\cite[\S 5]{djblocks}, we can define idempotents $H^{b}$ in $\hecke_a(K,x)$, labelled by the blocks of $\hecke_a(F,q)$ (i.e. representatives of tableau which have the same $e$-core), which act as the identity on Specht modules in that block, and zero on all the other Specht modules. As $\tau$ is an $e$-core, and as such is the only Specht module in its block, denote the idempotent corresponding to this block as $H^\tau$. Therefore for $\nu \vdash a$:
\[S^\nu_K H^\tau = \begin{cases} S^\tau_K & \text{if }\nu = \tau \\ 0 & \text{otherwise.} \end{cases}\]
Combining this with the Littlewood--Richardson rule and Lemma~\ref{lrrcoeffs}:
\[S^{\tilde{\tau}}_K(H^\tau\otimes 1) = S^\tau_K \otimes S^{(1^m)}_K.\]

By~\cite[Theorem 5.3]{djblocks}, we know that $H^b \in \hecke_a(\mathcal{O},x)$ for any block $b$ of $\hecke_a(F,q)$, (even though it is defined in $\hecke_a(K,x)$). Furthermore, by~\cite[Theorem 5.4]{djblocks}:
\[\{\overline{H^b}: b\text{ is a block of }\hecke_a(F,q)\}\]
is a complete set of central orthogonal primitive idempotents of $\hecke_a(F,q)$, i.e. $\overline{H^b}$ is the block idempotent of $b$. Therefore $\overline{H^\tau}$ acts as the identity on $S_F^\tau$, and 0 on all other Specht modules. By Lemma~\ref{sub1} and Lemma~\ref{sub2}:
\begin{equation} 
\label{emod}
S^\tau_F \otimes S^{(1^m)}_F = \left(S^\tau_F \otimes S^{(1^m)}_F\right)(\overline{H^\tau}\otimes 1) \subseteq S^{\tilde{\tau}}_F(\overline{H^\tau}\otimes 1). 
\end{equation}
For simplicity of notation, let $V = S_K^{\tilde{\tau}}(H^\tau\otimes 1)$, a $\hecke_\mu(K,x)$-module and $M = S^{\tilde{\tau}}_\mathcal{O}(H^\tau\otimes 1)$ a $\hecke_\mu(\mathcal{O},x)$-module. As $\mathcal{O}$ is a principal ideal domain, and $M$ is an $\mathcal{O}$-submodule of the finite-dimensional $\mathcal{O}$-module $S^{\tilde{\tau}}_\mathcal{O}$, it must have a finite $\mathcal{O}$-basis.

In particular, as $S^{\tilde{\tau}}_\mathcal{O} \otimes_{\mathcal{O}} K \cong S^{\tilde{\tau}}_K$ as $\hecke_{\mu}(K,x)$-modules, and $H^\tau$ is central in both $\hecke_\mu(K,x)$ and $\hecke_\mu(\mathcal{O},x)$, we get that $M\otimes_\mathcal{O} K \cong V$. Using the relevant analogue of~\cite[Proposition 16.12]{curtisreiner1}, we get that $M$ is a free $\hecke_\mu(\mathcal{O},x)$-lattice in $V$, as defined in~\cite[\S 16]{curtisreiner1}. In particular each $\mathcal{O}$-basis of $M$ is a $K$-basis of $V$. Hence:
\[\dim_\mathcal{O}(M) = \dim_K(V).\]
Note that as $\overline{S^{\tilde{\tau}}_\mathcal{O}} = S^{\tilde{\tau}}_F$, and as reducing modules via the map $\overline{{}\cdot{}}$ commutes with multiplication from the Hecke algebra, that:
\[\overline{M} = \overline{S^{\tilde{\tau}}_\mathcal{O}(H^\tau\otimes 1)} = \overline{S^{\tilde{\tau}}_\mathcal{O}}(\overline{H^\tau\otimes 1}) = S^{\tilde{\tau}}_F(\overline{H^\tau}\otimes 1).\]
By the discussion preceding~\cite[Proposition 16.16]{curtisreiner1}:
\[\dim_F(\overline{M}) = \dim_\mathcal{O}(M),\]
therefore:
\[\dim_F(S^{\tau}_F\otimes S^{(1^m)}_F) = \dim_K(S^{\tau}_K\otimes S^{(1^m)}_K) = \dim_K(S^{\tilde{\tau}}_K(H^\tau\otimes 1)) = \dim_F(S^{\tilde{\tau}}_F(\overline{H^\tau}\otimes 1) ).\]
Coupling this with (\ref{emod}) shows that $S^{\tau}_F\otimes S^{(1^m)}_F = S_F^{\tilde{\tau}}(\overline{H^{\tau}}\otimes 1)$, and hence is a direct summand of $S_F^{\tilde{\tau}}$ as a $\hecke_\mu$-module.
\end{proof}
As a result of this theorem, we know that if $S^\tau\otimes S^{(1^{de})}$ has a Green correspondent $M$ in $\hecke_n$, then $M$ will lie in the block $B_{\tau,d}$ as it is a direct summand of $S^{\tilde{\tau}}$ by the module version of Corollary~\ref{maps_corol}. Thus if we can show that this Green correspondent, and the Brauer correspondent of $B_{\tau,0}\otimes B_{\emptyset,d}$ both exist, then we will have successfully identified $B_{\tau,d}$ as the Brauer correspondent of $B_{\tau, 0}\otimes B_{\emptyset, d}$.
\section{Blocks in characteristic zero}

Throughout this section we will assume that the (algebraically closed) field $F$ has characteristic $0$, hence by the definition in the introduction, an $e$-$0$-parabolic subgroup, (or just $e$-parabolic) is any parabolic subgroup isomorphic to a product of copies of $\symgp_e$. 

\subsection{Vertices of Sign Modules}

We start by looking at the vertex of the sign module, in order to get a lower bound for the vertex of blocks of $\hecke_n$ with empty core. A key tool in characteristic zero is the following theorem~\cite[Theorem 3.1]{du}:
\begin{theorem}
\label{duzero}
If $M$ is a finitely generated indecomposable $\hecke_n$-module, then its vertex is an $e$-parabolic subgroup of $\symgp_n$.
\end{theorem}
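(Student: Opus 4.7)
The plan is to prove this by induction on $n$, aiming to show that every part of the vertex partition $\lambda$ equals either $1$ or $e$. The base case $n < e$ is immediate: $\hecke_n$ is semi-simple by~\cite[Corollary 2.21]{mathas} (since the quantum characteristic exceeds $n$), so every indecomposable module is projective and has trivial vertex $\symgp_{(1^n)}$, which is trivially an $e$-parabolic.

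For the inductive step, let $M$ be indecomposable with vertex $\symgp_\lambda$, where $\lambda = (\lambda_1, \dots, \lambda_s)$. First I would apply the module-level analogue of Lemma~\ref{3.2i} to produce an indecomposable $\hecke_\lambda$-source $U$ of vertex $\symgp_\lambda$ with $M \mid U\otimes_{\hecke_\lambda} \hecke_n$. Since $F$ is algebraically closed of characteristic zero, an indecomposable finite-dimensional module over the tensor product $\hecke_{\lambda_1}\otimes\cdots\otimes\hecke_{\lambda_s}$ decomposes as $U \cong U_1\otimes \cdots\otimes U_s$ with each $U_i$ indecomposable over $\hecke_{\lambda_i}$; the module analogue of Theorem~\ref{vertex_tensor} then forces each $U_i$ to have vertex $\symgp_{(\lambda_i)}$ as an $\hecke_{\lambda_i}$-module. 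I then examine each $\lambda_i$ in turn. If $1 < \lambda_i < e$, then $\hecke_{\lambda_i}$ is semi-simple, so $U_i$ is projective and has vertex $\symgp_{(1^{\lambda_i})}$, contradicting $\symgp_{(\lambda_i)}$. If $e < \lambda_i < n$, the induction hypothesis applied to $U_i$ forces $\symgp_{(\lambda_i)}$ to be an $e$-parabolic of $\symgp_{\lambda_i}$; but a single-factor parabolic $\symgp_{\lambda_i}$ is an $e$-parabolic only when $\lambda_i\in\{1,e\}$, contradiction. So every $\lambda_i$ other than $n$ itself must lie in $\{1,e\}$.

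The main obstacle is the residual case $\lambda = (n)$ with $n > e$, where the inductive strategy cannot split $\lambda$ into smaller pieces. To rule this out, I must show directly that no indecomposable $\hecke_n$-module has the whole of $\symgp_n$ as its vertex, equivalently, that the regular $\hecke_n$-module is relatively $\symgp_\tau$-projective for some proper parabolic $\symgp_\tau$. The cleanest route I see is to reduce to a block-by-block analysis via the central block idempotents $\overline{H^b}$ from~\cite[\S5]{djblocks} and, for each block labelled by an $e$-core $\rho$ and $e$-weight $d$, exhibit a specific summand (for instance an induced Specht module of the form $S^\rho\otimes S^{(1^{de})}$ as in Theorem~\ref{specht_rest}) whose relative projectivity forces the block to be $\symgp_\tau$-projective for a proper $e$-parabolic $\symgp_\tau$. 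This step is the technical heart of Du's argument in~\cite[\S3]{du}, and I would expect it to draw heavily on semi-simplicity of the generic Hecke algebra, reduction modulo $(x-q)$, and dominance arguments for partitions sharing a fixed $e$-core.
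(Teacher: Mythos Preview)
The paper does not prove this statement at all: it is quoted verbatim from~\cite[Theorem 3.1]{du} and used as an input throughout Section~6. So there is no proof in the paper to compare your proposal against.

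On the substance of your sketch: the inductive reduction to the single residual case $\lambda=(n)$ with $n>e$ is correct and clean. The problem is your proposed handling of that case. You suggest passing to block idempotents and invoking Theorem~\ref{specht_rest} to force each block to be $\symgp_\tau$-projective for a proper $e$-parabolic $\symgp_\tau$. In the logical structure of this paper that is circular: the characteristic-zero thread of Sections~6 and~8 \emph{uses} Theorem~\ref{duzero} (via Corollary~\ref{se_vertex} and Theorem~\ref{sign_zero}) to pin down vertices, so you cannot appeal to the later block analysis to establish it. More importantly, the route you describe is far more elaborate than what is actually needed. Du's argument for the residual case is a direct trace computation: with $\lambda=(e^d,1^r)$ the maximal $e$-parabolic composition of $n$ (so $n=de+r$, $0\le r<e$), one shows that the central element $\trace_\lambda^{(n)}(1)=\sum_{w\in\mathscr{R}_\lambda^{(n)}}q^{-\ell(w)}T_{w^{-1}}T_w$ is invertible in $Z(\hecke_n)$; this is exactly~\cite[Theorem 2.7]{du}, the same result invoked in the proof of Theorem~\ref{hde_proj} here. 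Invertibility of this element makes the identity map on \emph{every} $\hecke_n$-module $\symgp_\lambda$-projective at once, with no block-by-block or Specht-module analysis required. Your sketch should replace the proposed block argument with this single invertibility statement.
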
 
In particular, this means that the sign module $S^{(1^e)}$ for $\hecke_e$ either has fixed-point-free vertex $\symgp_e$ or is projective as an $\hecke_e$-module. We first prove a more general result about $e$-restricted partitions. Recall from~\cite[Proposition 2.11, Theorem 3.43]{mathas} that the non-isomorphic simple modules for $\hecke_n$ are given by:
\[\{D^\lambda = S^\lambda/J(S^\lambda) : \text{ $\lambda\vdash n$ is $e$-restricted}\},\]
where for an $\hecke_n$-module $M$, we denote by $J(M)$ its Jacobson radical. If $\lambda\vdash n$ is an $e$-core, then it is $e$-restricted, and as both $S^\lambda$ and hence $D^\lambda$ lie in a block of weight 0, we conclude by~\cite[Theorem 1.2]{erdmann} that $D^\lambda$ is projective. The next lemma shows the opposite is true when $\lambda$ is not an $e$-core.
\begin{lemma}
Let $\lambda\vdash n$ be $e$-restricted, but not an $e$-core. Then $D^\lambda$ is not projective.
\end{lemma}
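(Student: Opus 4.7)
The plan is to argue by contradiction: assume $D^\lambda$ is projective, and show that $\lambda$ must then be an $e$-core, contradicting the hypothesis.

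Since $D^\lambda$ is simultaneously simple and projective, the surjection from its projective cover $P(\lambda) \twoheadrightarrow D^\lambda$ splits, and projective covers admit no proper projective direct summands, forcing $P(\lambda) \cong D^\lambda$. In particular $[P(\lambda) : D^\mu] = \delta_{\lambda\mu}$ for every $e$-restricted partition $\mu$ of $n$.

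The key step will be to exploit the symmetry of the Cartan matrix of $\hecke_n$. Since $\hecke_n$ is a symmetric $F$-algebra (see~\cite[\S 2]{mathas}), one has $[P(\mu) : D^\lambda] = [P(\lambda) : D^\mu]$ for all $\mu$, and combining with the previous step gives $[P(\mu) : D^\lambda] = \delta_{\lambda\mu}$. Thus $D^\lambda$ appears as a composition factor of no indecomposable projective other than $P(\lambda) = D^\lambda$ itself, so $D^\lambda$ is an isolated vertex of the Cartan (Ext) quiver of $\hecke_n$. The standard correspondence between connected components of this quiver and the algebra's blocks then shows that $\{D^\lambda\}$ forms a block of $\hecke_n$ on its own; since the block's unique simple coincides with its own projective cover, this block is isomorphic to the semisimple matrix algebra $M_{\dim D^\lambda}(F)$.

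Finally I will compare this with the combinatorial classification of blocks. By Nakayama's conjecture \cite[Corollary 5.38]{mathas} the block of $\hecke_n$ containing $\lambda$ is $B_{\rho,d}$, with $\rho$ the $e$-core of $\lambda$ and $d$ its $e$-weight; by \cite[Theorem 1.2]{erdmann} a semisimple block must have $d=0$. This forces $\lambda = \rho$ to be an $e$-core, contradicting the hypothesis. The main obstacle is essentially just the careful invocation of the symmetry of the Cartan matrix together with the identification of Cartan-quiver components with blocks; both are standard facts about finite-dimensional symmetric algebras, so the proof itself should be short.
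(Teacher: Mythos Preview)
Your argument is correct, but it takes a different route from the paper's. The paper stays entirely within the cellular framework: assuming $P^\lambda = S^\lambda = D^\lambda$, it uses the formula $[P^\lambda:D^\lambda]=\sum_{\mu\vdash n} d_{\mu\lambda}^2$ from \cite[Theorem~2.20]{mathas}, and then invokes the cell-linkage description of blocks \cite[Corollary~2.22]{mathas} to find some $\mu\neq\lambda$ with $d_{\mu\lambda}\geq 1$ (since $S^\lambda=D^\lambda$, any shared composition factor must be $D^\lambda$ itself), forcing $[P^\lambda:D^\lambda]\geq 2$, a contradiction. Your approach instead uses the symmetry of the Cartan matrix (via $\hecke_n$ being a symmetric algebra) to isolate $D^\lambda$ as its own block, and then appeals to the Erdmann--Nakano classification \cite{erdmann} to conclude the weight is zero. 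The paper's argument is more self-contained, needing only cellular-algebra machinery already in play; yours trades that for general symmetric-algebra facts plus a heavier external input (the representation-type classification), though that reference is already used elsewhere in the paper, so the cost is minimal.
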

\begin{proof}
Note that as $\lambda$ is $e$-restricted, $D^\lambda$ is a non-zero simple module. Denote $d_{\mu\lambda} = [S^\mu: D^\lambda]$ for $\mu\vdash n$, and  assume that $D^\lambda$ is projective. Then in particular, $P^\lambda = S^\lambda = D^\lambda$, where $P^\lambda$ is the corresponding projective indecomposable module. Thus $[P^\lambda:D^\lambda] = 1$. From~\cite[Theorem 2.20]{mathas}, we have:
\[[P^\lambda:D^\lambda] = \sum_{\mu\vdash n} d_{\mu\lambda}^2 \geq 1\]
as $d_{\lambda\lambda} = 1$. Let $\nu$ be another partition in the same block as $\lambda$ (this exists as $\lambda$ is not an $e$-core). Then by~\cite[Corollary 2.22]{mathas}, $S^{\lambda}$ and $S^{\nu}$ are \emph{cell-linked}, i.e.\ there exists a chain of cell-modules $S^{\lambda_i}$ for $i = 0, \dots t$ with $S^{\lambda_0} = S^\lambda$, $S^{\lambda_t} = S^\nu$, and $S^{\lambda_i}$ and $S^{\lambda_{i+1}}$ share a simple composition factor. As $S^\lambda$ and $S^\nu$ are cell-linked, there exists some $S^\mu$ which shares a simple composition factor with $S^\lambda$, which must be $D^\lambda$ itself. Thus $d_{\mu\lambda} \geq 1$, and hence $[P^\lambda: D^\lambda] \geq 2$ giving the required contradiction.
\end{proof}
\begin{corollary}
\label{se_vertex}
$S^{(1^e)}$ has vertex $\symgp_e$ as an $\hecke_e$-module.
\end{corollary}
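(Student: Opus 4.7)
The plan is to combine the preceding lemma (non-projectivity of $D^\lambda$ for $\lambda$ $e$-restricted and not an $e$-core) with Du's theorem in characteristic zero (Theorem~\ref{duzero}) restricting possible vertices to $e$-parabolic subgroups. The main ingredients are an identification of $S^{(1^e)}$ with $D^{(1^e)}$, a combinatorial check that $(1^e)$ is not an $e$-core, and an enumeration of the $e$-parabolic subgroups of $\symgp_e$.

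First I would observe that $S^{(1^e)}$ is the one-dimensional sign module. Since the partition $(1^e)$ is $e$-restricted (all differences between consecutive parts are zero), Lemma~\ref{indecompspecht} gives that $S^{(1^e)}$ is indecomposable with simple head $D^{(1^e)}$, and since $D^{(1^e)}$ is non-zero and $S^{(1^e)}$ is one-dimensional, the two modules must coincide.

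Next, I would verify that $(1^e)$ is \emph{not} an $e$-core: its Young diagram consists of a single column of length $e$, which itself is a removable $e$-hook (its removal leaves $\emptyset$). Applying the preceding lemma then shows that $D^{(1^e)} = S^{(1^e)}$ is not projective as an $\hecke_e$-module.

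Finally, I would appeal to Theorem~\ref{duzero}: since $S^{(1^e)}$ is an indecomposable $\hecke_e$-module in characteristic zero, its vertex is an $e$-parabolic subgroup of $\symgp_e$. By definition such a subgroup is isomorphic to a product of copies of $\symgp_e$, and inside $\symgp_e$ the only possibilities are the trivial subgroup (which would force $S^{(1^e)}$ to be projective) or $\symgp_e$ itself. Non-projectivity therefore forces the vertex to be $\symgp_e$. There is no genuine obstacle here; the only subtle point is confirming that the $e$-parabolic subgroups available inside $\symgp_e$ really are only these two, which is immediate from the definition.
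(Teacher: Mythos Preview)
Your proposal is correct and follows essentially the same argument as the paper: apply Theorem~\ref{duzero} to reduce to the dichotomy ``projective or vertex $\symgp_e$'', identify $S^{(1^e)}$ with $D^{(1^e)}$, and use the preceding lemma (with $(1^e)$ not an $e$-core) to rule out projectivity. You simply spell out in more detail why $S^{(1^e)}=D^{(1^e)}$ and why the only $e$-parabolic subgroups of $\symgp_e$ are the trivial one and $\symgp_e$ itself.
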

\begin{proof}
By Theorem~\ref{duzero}, $S^{(1^e)}$ is either projective or has vertex $\symgp_e$. As $S^{(1^e)} = D^{(1^e)}$, it cannot be projective by the preceding lemma as it is not an $e$-core.
\end{proof}
We can now extend this result to larger Hecke algebras. 
\begin{theorem} 
\label{sign_zero}
Let $\lambda = (e^d) \vdash de$ for $d\geq 1$. As a $\hecke_{de}$-module, $S^{(1^{de})}$ has vertex $\symgp_\lambda$.
\end{theorem}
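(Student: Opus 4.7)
The plan is to combine Du's characteristic zero theorem (Theorem~\ref{duzero}) with a restriction argument exploiting the vertex of an iterated tensor power of the sign module. By Theorem~\ref{duzero}, the vertex of $S^{(1^{de})}$ as an $\hecke_{de}$-module must be an $e$-parabolic, so up to conjugation in $\symgp_{de}$ it corresponds to some composition $\tau=(1^a,e^k)$ with $a+ek=de$. The theorem will therefore reduce to showing $k=d$ (equivalently $a=0$).

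Since $S^{(1^{de})}$ is one-dimensional with each $T_i$ acting as $-1$, its restriction to $\hecke_{(e^d)}$ is isomorphic to the external tensor product $S^{(1^e)}\otimes\cdots\otimes S^{(1^e)}$ ($d$ copies). Corollary~\ref{se_vertex} gives that each factor has vertex $\symgp_e$, and iterating Theorem~\ref{vertex_tensor} then shows that this tensor product has vertex $\symgp_{(e^d)}$ as an $\hecke_{(e^d)}$-module.

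To run the vertex comparison, note that since $S^{(1^{de})}$ is $\symgp_\tau$-projective, Higman's criterion yields $S^{(1^{de})} \mid S^{(1^{de})}|_{\hecke_\tau}\otimes_{\hecke_\tau}\hecke_{de}$. Restricting this divisibility to $\hecke_{(e^d)}$ and decomposing the right-hand side via the Mackey formula (the module analogue of Theorem~\ref{Mackey} as in~\cite[Theorem 2.29]{jones}), each indecomposable summand is relatively $(\symgp_\tau^{d_0}\cap\symgp_{(e^d)})$-projective for some double coset representative $d_0\in\mathscr{D}^{(de)}_{\tau,(e^d)}$. The one-dimensional indecomposable $(S^{(1^e)})^{\otimes d}$ must sit inside exactly one such Mackey summand, and so inherits the same projectivity. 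Since its vertex is all of $\symgp_{(e^d)}$, while $\symgp_\tau^{d_0}\cap\symgp_{(e^d)}$ is a subgroup of $\symgp_{(e^d)}$, comparing orders forces equality $\symgp_\tau^{d_0}\cap\symgp_{(e^d)}=\symgp_{(e^d)}$, and hence $\symgp_{(e^d)}\subseteq\symgp_\tau^{d_0}$. Thus $|\symgp_\tau|\geq(e!)^d$; but $|\symgp_\tau|=(e!)^k$, so $k\geq d$, and together with $a+ek=de$ and $a\geq 0$ this forces $k=d$ and $a=0$, so $\tau=(e^d)$.

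The main obstacle will be the Mackey bookkeeping at the restriction step: one must verify that every indecomposable summand of the restricted induced module really is $(\symgp_\tau^{d_0}\cap\symgp_{(e^d)})$-projective, and that the one-dimensional module $(S^{(1^e)})^{\otimes d}$ sits inside only one Mackey piece. Once this is in place, the final deduction is a straightforward order comparison of parabolic subgroups.
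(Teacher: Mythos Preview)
Your argument is correct and follows essentially the same strategy as the paper: both proofs rest on (i) Theorem~\ref{duzero} forcing the vertex to be $e$-parabolic, and (ii) the restriction $S^{(1^{de})}|_{\hecke_{(e^d)}}\cong (S^{(1^e)})^{\otimes d}$ having vertex $\symgp_{(e^d)}$ via Corollary~\ref{se_vertex} and Theorem~\ref{vertex_tensor}. The only difference is in how the two facts are combined. The paper first observes that any $e$-parabolic subgroup of $\symgp_{de}$ is conjugate into $\symgp_{(e^d)}$, so the vertex may be taken inside $\symgp_\lambda$; then it invokes \cite[Lemma~3.2]{du} (the module version of Lemma~\ref{3.2i}) to say that some indecomposable summand of the restriction has the same vertex, and since the restriction is one-dimensional this summand is the whole restriction, forcing the vertex to be $\symgp_\lambda$. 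You instead run the Mackey decomposition explicitly and compare orders of parabolics. Your route is slightly longer but self-contained, essentially reproving the needed direction of \cite[Lemma~3.2]{du}; the paper's is shorter because it outsources that step. Your concern about the Mackey bookkeeping is not a real obstacle: the Mackey pieces give a direct sum decomposition, and since $(S^{(1^e)})^{\otimes d}$ is indecomposable (indeed one-dimensional), Krull--Schmidt places it as a summand of a single piece, whence it inherits the claimed relative projectivity.
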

\begin{proof} 
Note that $\symgp_{\lambda} \cong \prod_{i=1}^d\symgp_e$, and as $\hecke_\lambda$-modules we have
\[S^{(1^{de})}\cong (S^{(1^e)})^{\otimes d}.\]
The latter has vertex  $\symgp_\lambda$ as a $\hecke_\lambda$-module by repeated applications of Theorem~\ref{vertex_tensor}. 
As we know the vertex of $S^{(1^{de})}$ is $e$-parabolic, it must be contained in $\symgp_\lambda$. By~\cite[Lemma 3.2]{du}, as it is simple both as a $\hecke_n$ and $\hecke_\lambda$-module, they share the same vertex.
\end{proof}

Thus this gives a lower bound on the vertex of the empty core block of weight $d$ of $\hecke_{de}$ by Theorem~\ref{blocks_and_modules}. We will now give a upper bound by showing that this block is in fact $(\symgp_\lambda, \symgp_\lambda)$-projective, for $\lambda = (e^d)$.

\subsection{Relative projectivity of the empty core block}

We begin with the following definition (from~\cite[\S2 ]{jones}).
\begin{definition}
\label{map/trace_def}
Let $\lambda, \mu \vDash n$ with $\symgp_\lambda \subseteq \symgp_\mu \subseteq \symgp_n$, and $M$ be a $(\hecke_n, \hecke_n)$-bimodule. For $m\in M$, define the \emph{relative trace} from $\symgp_\lambda$ to $\symgp_\mu$ of $m$ as
\[\trace_\lambda^\mu: M \to M,\]
\[m \mapsto\sum_{w\in \mathscr{R}_\lambda^\mu} q^{-\ell(w)}T_{w^{-1}}mT_w.\]
For right $\hecke_n$-modules $P$ and $Q$, we say that $\phi\in \text{Hom}_{\hecke_n}(P, Q)$ is \emph{$\symgp_\lambda$-projective} if there exists $\psi\in \text{Hom}_{\hecke_\lambda}(P, Q)$ such that $\phi = \trace_\lambda^n(\psi)$. Note we can apply the trace map to $\psi$ as since $P$ and $Q$ are right $\hecke_n$-modules, we can view $\text{Hom}_{\hecke_n}(P, Q)$ as a $(\hecke_n, \hecke_n)$-bimodule with $F$-submodule $\text{Hom}_{\hecke_\lambda}(P, Q)$.
\end{definition}
The following from~\cite[Proposition 2.13, Theorem 2.34]{jones} are key facts about the relative trace map.
\begin{itemize}
\item For $\gamma \vDash n$, and $M$ a $(\hecke_n, \hecke_n)$-bimodule, denote
\[Z_M(\hecke_\gamma) = \{m\in M : ma = am \text{ for all } a\in \hecke_\gamma\}.\]
Then $m\in Z_M(\hecke_\gamma)$ implies that $\trace_\gamma^n(m)\in Z_M(\hecke_n)$.
\item A $\hecke_\mu$ module $M$ is relatively $\symgp_\lambda$-projective if and only if the identity map on $M$ as a $\hecke_\mu$-module is $\symgp_\lambda$-projective.
\end{itemize}
As before, let $\lambda = (e^d)\vdash de$. Here we will show that $\hecke_{de}$ has vertex $(\symgp_\lambda, \symgp_\lambda)$, and hence give an upper bound for the vertex of the empty core block.
\begin{theorem}
\label{hde_proj}
$\hecke_{de}$ is relatively $(\symgp_\lambda, \symgp_\lambda)$-projective as a $(\hecke_{de},\hecke_{de})$-bimodule.
\end{theorem}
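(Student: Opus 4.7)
The strategy is to apply Theorem~\ref{duzero} in the bimodule setting. By the algebra isomorphism $\hecke_{de}\otimes\hecke_{de}^{\text{op}}\cong\hecke_{(de,de)}$ recalled just before Theorem~\ref{Mackey}, a $(\hecke_{de},\hecke_{de})$-bimodule is the same as a left $\hecke_{(de,de)}$-module, and under this identification the bimodule vertex of Theorem~\ref{vertices} of any indecomposable summand coincides with its usual vertex as a $\hecke_{(de,de)}$-module --- a parabolic subgroup of $\symgp_{(de,de)}=\symgp_{de}\times\symgp_{de}$. I would decompose $\hecke_{de}$ into indecomposable $\hecke_{(de,de)}$-module summands and, using the convention stated immediately before Theorem~\ref{vertex_tensor} that the relative-projectivity machinery carries across from $\hecke_n$-modules to $\hecke_\sigma$-modules, invoke Theorem~\ref{duzero} to conclude that each such summand has $e$-parabolic vertex in $\symgp_{(de,de)}$.

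An $e$-parabolic subgroup of $\symgp_{(de,de)}$ corresponds to a composition of $(de,de)$ all of whose parts lie in $\{1,e\}$, and hence factors as a product $\symgp_{\tau_1}\times\symgp_{\tau_2}$ where each $\symgp_{\tau_i}$ is an $e$-parabolic of $\symgp_{de}$, conjugate in $\symgp_{de}$ to $\symgp_e^{m_i}$ for some $m_i\leq d$. Since $\symgp_e^{m_i}\subseteq\symgp_e^d=\symgp_\lambda$, the bimodule vertex of every indecomposable summand of $\hecke_{de}$ is contained in $(\symgp_\lambda,\symgp_\lambda)$ up to simultaneous conjugation. Taking the direct sum then yields that $\hecke_{de}$ itself is $(\symgp_\lambda,\symgp_\lambda)$-projective as a $(\hecke_{de},\hecke_{de})$-bimodule, which is what is claimed.

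The main obstacle is the rigorous application of Theorem~\ref{duzero} to modules over the parabolic algebra $\hecke_{(de,de)}=\hecke_{de}\otimes\hecke_{de}$. The author's blanket convention suggests this extension is routine, but if a direct argument is required one could emulate the strategy of the proof of Theorem~\ref{sign_zero}: combine Theorem~\ref{vertex_tensor} with a careful reduction of indecomposable $\hecke_{(de,de)}$-modules to indecomposable tensor factors over each copy of $\hecke_{de}$, and then invoke the original form of Theorem~\ref{duzero} on each factor. Some additional input beyond Krull--Schmidt is likely needed here, since not every indecomposable module over a tensor-product algebra is literally a tensor product of indecomposables.
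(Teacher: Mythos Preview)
Your route is genuinely different from the paper's. The paper gives a direct splitting: it defines $\varphi:\hecke_{de}\to\hecke_{de}\otimes_{\hecke_\lambda}\hecke_{de}$ by $h\mapsto h\cdot\trace_\lambda^{(de)}(1\otimes 1)$ and $\psi:a\otimes b\mapsto ab\,\tilde{x}^{-1}$ where $\tilde{x}=\trace_\lambda^{(de)}(1)$, and checks $\psi\varphi=\mathbbm{1}$. The only non-formal ingredient is that $\tilde{x}$ is central and invertible in $\hecke_{de}$, which is quoted from \cite[Theorem 2.7]{du} and holds precisely because we are in characteristic zero.

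Your argument instead tries to quote the structural consequence of that invertibility, Theorem~\ref{duzero}, at the level of $\hecke_{(de,de)}$-modules. You have correctly identified the obstacle, but I think you underestimate it. Two points:
\begin{itemize}
\item The ``blanket convention'' you invoke is the one stated in Section~3, and it explicitly says ``for the rest of this section''. It licenses transporting the relative-projectivity machinery (Higman, Mackey, vertices, Green) to $\hecke_\sigma$; it does not claim that every cited theorem about $\hecke_n$-modules, in particular Theorem~\ref{duzero}, automatically holds for $\hecke_\sigma$-modules.
\item Your proposed repair via tensor factorisation of indecomposables does not work: over $A\otimes B$ with $F$ algebraically closed, indecomposables need not be tensor products of indecomposables. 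For instance with $A=B=F[x]/(x^2)$ the three-dimensional module $F[x,y]/(x^2,xy,y^2)$ over $A\otimes B$ is indecomposable but not of the form $M\otimes N$. So Theorem~\ref{vertex_tensor} alone cannot reduce you to Theorem~\ref{duzero} on each factor.
\end{itemize}

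The honest way to extend Theorem~\ref{duzero} to $\hecke_{(de,de)}$ is to observe that $\trace_{(\lambda,\lambda)}^{(de,de)}(1)=\tilde{x}\otimes\tilde{x}$ is central and invertible in $\hecke_{de}\otimes\hecke_{de}$ because $\tilde{x}$ is in each factor, and then rerun Du's argument. But once you have $\tilde{x}$ invertible you are one line away from the paper's explicit splitting, so the detour through Theorem~\ref{duzero} buys nothing. I would recommend rewriting along the paper's lines: it is shorter, and it isolates exactly the place where characteristic zero is used.
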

\begin{proof} We will show as bimodules that:
\[\hecke_{de} \mid \hecke_{de}\otimes_{\hecke_\lambda}\hecke_{de}\cong  \hecke_{de}\otimes_{\hecke_\lambda} \hecke_\lambda \otimes_{\hecke_{\lambda}}\hecke_{de}.\] 
To do this, we define $(\hecke_{de},\hecke_{de})$-bimodule homomorphisms $\varphi:\hecke_{de}\to \hecke_{de}\otimes_{\hecke_\lambda}\hecke_{de}$ and $\psi: \hecke_{de}\otimes_{\hecke_\lambda}\hecke_{de}\to \hecke_{de}$ such that $\psi\varphi = \mathbbm{1}_{\hecke_{de}}$.

As $1\otimes 1 \in Z_{\hecke_{de}\otimes_{\hecke_\lambda} \hecke_{de}}(\hecke_\lambda)$ (as we can push elements of $\hecke_\lambda$ across the tensor product), we can define:
\[x := \trace_\lambda^{de} (1\otimes 1) = \sum_{w\in \mathscr{R}_{\lambda}^{(de)}} q^{-\ell(w)}T_{w^{-1}}\otimes T_w,\]
with $x\in Z_{\hecke_{de}\otimes_{\hecke_\lambda}\hecke_{de}}(\hecke_{de})$ by the above properties. Thus we have a $(\hecke_{de}, \hecke_{de})$-bimodule homomorphism $\varphi:\hecke_{de}\to \hecke_{de}\otimes_{\hecke_\lambda}\hecke_{de}$ given by:
\[h\mapsto hx = xh.\]
Now define 
\[\tilde{x} := \trace_\lambda^{(de)}(1) = \sum_{w\in \mathscr{R}_{\lambda}^{(de)}}q^{-\ell(w)}T_{w^{-1}}T_w\]
By~\cite[Theorem 2.7]{du}, $\tilde{x}$ is invertible, and $\tilde{x}\in Z(\hecke_{de})$ (again by~\cite[Proposition 2.13]{jones}). As $\tilde{x}$ is central, so is $\tilde{x}^{-1}$. Now we can define a $(\hecke_{de}, \hecke_{de})$-bimodule homomorphism $\psi: \hecke_{de}\otimes_{\hecke_\lambda} \hecke_{de}\to \hecke_{de}$ via:
\[a\otimes b \mapsto ab\tilde{x}^{-1}\]
extended linearly, for $a,b\in \hecke_{de}$.

Finally, we show that $\psi\circ \varphi$ is the identity map on $\hecke_{de}$. Note that by the definition of both $x$ and $\tilde{x}$, we have $\psi(x) = \tilde{x}\tilde{x}^{-1} = 1$. Thus:
\[\psi\varphi(h) = \psi(hx) = h\psi(x) = h\cdot 1 = h,\]
completing the proof.
\end{proof}
\begin{corollary}
\label{upper_bound0}
Let $B$ be a block of $\hecke_{de}$, and $\lambda = (e^d)\vdash de$. Then $B$ is relatively $(\symgp_\lambda, \symgp_\lambda)$-projective as a $(\hecke_{de},\hecke_{de})$-bimodule.
\end{corollary}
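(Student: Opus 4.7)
The plan is to deduce this directly from Theorem~\ref{hde_proj} together with the fact that a block is a direct summand of the whole algebra as a bimodule, so relative projectivity passes from $\hecke_{de}$ to any of its block summands.

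First I would observe that any block $B$ of $\hecke_{de}$ has the form $B = e_B \hecke_{de}$ for a central primitive idempotent $e_B$, so
\[\hecke_{de} = B \oplus (1-e_B)\hecke_{de}\]
as a $(\hecke_{de},\hecke_{de})$-bimodule. Thus $B \mid \hecke_{de}$ as $(\hecke_{de},\hecke_{de})$-bimodules.

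Next I would invoke Theorem~\ref{hde_proj}, which states that $\hecke_{de}$ is $(\symgp_\lambda,\symgp_\lambda)$-projective as a bimodule over itself; by Higman's criterion this means $\hecke_{de} \mid \hecke_{de}\otimes_{\hecke_\lambda}\hecke_{de}$ as $(\hecke_{de},\hecke_{de})$-bimodules. Combining with the previous step, $B$ is a direct summand of a direct summand of $\hecke_{de}\otimes_{\hecke_\lambda}\hecke_{de}$, and hence
\[B \,\Big|\, \hecke_{de}\otimes_{\hecke_\lambda}\hecke_{de}\]
as $(\hecke_{de},\hecke_{de})$-bimodules. Applying part (c) of the bimodule version of Higman's criterion (Theorem~\ref{higman}) with $U = \hecke_{de}$ (viewed as an $\hecku_{\lambda,\lambda}$-module) then yields that $B$ is relatively $(\symgp_\lambda,\symgp_\lambda)$-projective as a $(\hecke_{de},\hecke_{de})$-bimodule, which is the desired conclusion.

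There is essentially no obstacle here: the content is entirely in Theorem~\ref{hde_proj}, and the corollary is the standard observation that direct summands inherit relative projectivity. The only care needed is to make sure one invokes the bimodule formulation of Higman's criterion, which is valid since $(\hecke_{de},\hecke_{de})$-bimodules are left modules over $\hecku_{de,de}$ and blocks are direct summands in this category.
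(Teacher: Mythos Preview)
Your proposal is correct and matches the paper's intent exactly: the corollary is stated without proof immediately after Theorem~\ref{hde_proj}, precisely because it follows from the observation that a block is a bimodule direct summand of $\hecke_{de}$ and direct summands inherit relative projectivity via Higman's criterion. You have simply spelled out the details the paper leaves implicit.
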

At this point, we have all the machinery required to show that $(\symgp_\lambda, \symgp_\lambda)$ is the vertex of the empty-core block of $\hecke_{de}$ when our field has characteristic zero. However, we defer the proof to Section 8, where we can cover all cases on the characteristic of $F$ at once.
\section{Blocks in prime characteristic}

Throughout this section, let $F$ have prime characteristic $p > 0$. Recall that when $e$ is non-zero, then either $(e,p) = 1$ and $q$ is a primitive $e$-th root of unity, or $e = p$ and $q = 1$.

\subsection{Vertices of sign modules}

Our first aim is to prove a lower bound for the vertex of an empty core block of $\hecke_n$. We again do this by considering the vertex of the sign module, and using Theorem~\ref{blocks_and_modules}. Let $\tau \vDash n$, and define:
\[N_\tau  = \sum_{w\in \mathscr{R}_\tau^{(n)}}q^{-\ell(w)}.\]
\begin{proposition} 
\label{useful_calculation}
Let $\tau\vDash n$. Then as a right module, $S^{(1^n)}$ is $\symgp_\tau$-projective if and only $N_\tau\neq 0$.
\end{proposition}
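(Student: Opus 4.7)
The plan is to apply the trace-map characterisation stated immediately after Definition~\ref{map/trace_def}: $S^{(1^n)}$ is $\symgp_\tau$-projective if and only if the identity map $\mathbbm{1}_{S^{(1^n)}}$ is $\symgp_\tau$-projective, i.e.\ lies in the image of
\[
\trace_\tau^{(n)}\colon \Endo_{\hecke_\tau}\!\bigl(S^{(1^n)}\bigr) \longrightarrow \Endo_{\hecke_n}\!\bigl(S^{(1^n)}\bigr).
\]
The problem therefore reduces to computing the image of this map.

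First I would observe that the restriction of $S^{(1^n)}$ to $\hecke_\tau$ is the one-dimensional sign module of $\hecke_\tau$ (each generator $T_i$ with $s_i\in\symgp_\tau$ still acting by $-1$), and is in particular simple. Since $F$ is algebraically closed, Schur's lemma yields $\Endo_{\hecke_\tau}(S^{(1^n)}) = F\cdot\mathbbm{1}$, so every $\psi\in \Endo_{\hecke_\tau}(S^{(1^n)})$ has the form $\psi = c\cdot\mathbbm{1}$ for some $c\in F$, and the task reduces to computing $\trace_\tau^{(n)}(c\cdot\mathbbm{1})$.

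Next I would carry out that trace calculation. Let $\epsilon$ span $S^{(1^n)}$, so that $\epsilon T_v = (-1)^{\ell(v)}\epsilon$ for every $v\in\symgp_n$; in particular $\epsilon T_{w^{-1}} = (-1)^{\ell(w)}\epsilon$ since $\ell(w^{-1})=\ell(w)$. Interpreting $\trace_\tau^{(n)}$ on $\Endo_F(S^{(1^n)})$ via the natural $(\hecke_n,\hecke_n)$-bimodule structure $(T_{w^{-1}}\phi T_w)(p)=\phi(pT_{w^{-1}})T_w$, I compute
\[
\trace_\tau^{(n)}(c\cdot\mathbbm{1})(\epsilon) \;=\; c\sum_{w\in\mathscr{R}_\tau^{(n)}} q^{-\ell(w)}\bigl(\epsilon T_{w^{-1}}\bigr)T_w
\;=\; c\sum_{w\in\mathscr{R}_\tau^{(n)}} q^{-\ell(w)}(-1)^{\ell(w)}(-1)^{\ell(w)}\epsilon \;=\; cN_\tau\cdot\epsilon,
\]
and hence $\trace_\tau^{(n)}(c\cdot\mathbbm{1}) = cN_\tau\cdot\mathbbm{1}_{S^{(1^n)}}$.

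The conclusion is then immediate: the identity lies in the image of $\trace_\tau^{(n)}$ precisely when the equation $cN_\tau = 1$ admits a solution in $F$, which happens if and only if $N_\tau\neq 0$. Combined with the trace-map criterion, this yields the claimed equivalence. The main obstacle is really just pinning down the trace-map formula on $\Endo_F(S^{(1^n)})$ viewed as a $(\hecke_n,\hecke_n)$-bimodule; once the action is identified as $(T_{w^{-1}}\phi T_w)(p)=\phi(pT_{w^{-1}})T_w$, the proof collapses to a brief cancellation using $(-1)^{\ell(w)}\cdot(-1)^{\ell(w)}=1$.
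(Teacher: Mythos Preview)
Your proposal is correct and follows essentially the same route as the paper: both arguments use the trace-map criterion, invoke the simplicity of the restricted sign module to reduce $\Endo_{\hecke_\tau}(S^{(1^n)})$ to $F\cdot\mathbbm{1}$, and then perform the identical computation $\trace_\tau^{(n)}(c\cdot\mathbbm{1})(\epsilon)=cN_\tau\epsilon$. The only cosmetic difference is that you package both directions into a single description of the image of the trace map, whereas the paper treats the two implications separately.
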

\begin{proof}
Suppose $N_\tau \neq 0$, so is invertible in $F$. Denote the identity map on $S^{(1^n)}$ as a $\hecke_n$-module by $\mathbbm{1}_n$ and as a $\hecke_\tau$-module by $\mathbbm{1}_\tau$. Let $S^{(1^n)}$ be generated by the element $\epsilon$. Then:
\begin{align*} \trace_\tau^{(n)}\left(\frac{1}{N_\tau}\mathbbm{1}_\tau\right)(\epsilon) &= \frac{1}{N_\tau} \sum_{w\in \mathscr{R}_\tau^{(n)}} q^{-\ell(w)}\epsilon\cdot T_{w^{-1}} \mathbbm{1}_\tau T_w \\
										   &= \frac{1}{N_\tau} \sum_{w\in \mathscr{R}_\tau^{(n)}} q^{-\ell(w)}(-1)^{\ell(w^{-1})}\epsilon \cdot T_w \\
										   &= \frac{1}{N_\tau} \sum_{w\in \mathscr{R}_\tau^{(n)}} q^{-\ell(w)}(-1)^{\ell(w^{-1})+\ell(w)}\epsilon\\
										&= \frac{1}{N_\tau} N_\tau \epsilon \\
										&= \epsilon.
\end{align*}
Hence $\trace_\tau^{(n)}(\frac{1}{N_\tau} \mathbbm{1}_\tau) = \mathbbm{1}_n$. Therefore by the remarks following Definition~\ref{map/trace_def}, we conclude that $S^{(1^n)}$ is $\symgp_\tau$-projective.

Now suppose that $S^{(1^n)}$ is $\symgp_\tau$-projective. Again using the aforementioned remarks, there exists a $\hecke_\tau$-homomorphism $\psi$ such that $\mathbbm{1}_n = \trace_{\tau}^{(n)}(\psi)$. Since $S^{(1^n)}$ is an irreducible $\hecke_\tau$-module,  $\psi = f\mathbbm{1}_\tau$ for some $f\in F$. Then the above calculation shows that:
\[\mathbbm{1}_n = \trace_\tau^{(n)}(f \mathbbm{1}_\tau) = f N_\tau \mathbbm{1}_n\]
hence $f N_\tau = 1$, so $N_\tau$ must be non-zero.
\end{proof}
Therefore relative projectivity of $S^{(1^n)}$ relies entirely upon these $N_\tau$. Consider the following polynomial in $(\mathbb{Z}/p\mathbb{Z})[u]$: 
\[P_\tau^n (u) = \sum_{w\in \mathscr{R}_\tau^{(n)}} u^{\ell(w)},\]
and notice that $N_\tau = P_\tau^n (q^{-1})$. By~\cite[\S 1.11]{humphreys}, $P_\tau^n = P_n/P_\tau$ where $P_n$ is the Poincar\'e polynomial of $\symgp_n$, and $P_\tau$ is the Poincar\'e polynomial of $\symgp_\tau$. Thus to check relative projectivity of $S^{(1^n)}$, it suffices to count the zeroes of $P_n$ and $P_\tau$ at $q^{-1}$. 
\begin{definition}
For $q$ a primitive $e$-th root of unity in $F$ (or $q = 1$ if $e = p$) and $P\in F[u]$, define $z(P)$ to be largest integer $l$ such that $(u-q^{-1})^l \mid P(u)$ in $F[u]$.
\end{definition}
Thus we have the following test:
\begin{corollary} 
\label{proj_test}
For $\tau\vDash n$, $N_\tau\neq 0$ if and only if $z(P_n) = z(P_\tau)$. Hence $S^{(1^n)}$ is $\symgp_\tau$-projective if and only if $z(P_n) = z(P_\tau)$
\end{corollary}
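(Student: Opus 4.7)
The plan is to reduce everything to a divisibility statement about polynomials. By Proposition~\ref{useful_calculation}, the second equivalence follows immediately from the first, so the entire task is to show that $N_\tau \neq 0$ if and only if $z(P_n) = z(P_\tau)$.

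First I would record the factorisation $P_n = P_\tau \cdot P_\tau^n$ in $(\mathbb{Z}/p\mathbb{Z})[u]$, which is exactly the content of $P_\tau^n = P_n/P_\tau$ cited from~\cite[\S 1.11]{humphreys}; note the key point is that this quotient really is a polynomial, not merely a rational function, and this is the identity that ties together all three of $N_\tau$, $P_n$ and $P_\tau$. Because the order of vanishing of a product at a fixed point is the sum of the orders of vanishing of the factors, this identity yields
\[z(P_n) = z(P_\tau) + z(P_\tau^n).\]

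Next I would unwind the definitions: $N_\tau = P_\tau^n(q^{-1})$ by construction, so $N_\tau \neq 0$ is exactly the statement that $q^{-1}$ is not a root of $P_\tau^n$, i.e. $z(P_\tau^n) = 0$. Combined with the displayed equation this is equivalent to $z(P_n) = z(P_\tau)$, which proves the first equivalence. Applying Proposition~\ref{useful_calculation} gives the second.

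There is no real obstacle here; the only point that needs care is recognising that divisibility of the Poincar\'e polynomials happens already in $\mathbb{Z}[u]$ (and thus passes to $(\mathbb{Z}/p\mathbb{Z})[u]$ and to $F[u]$), so reducing modulo $p$ and substituting $u = q^{-1}$ commute with the factorisation. Once that is observed the argument is a two-line verification.
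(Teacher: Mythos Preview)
Your proposal is correct and follows exactly the approach the paper intends: the paper states this corollary without proof immediately after noting that $N_\tau = P_\tau^n(q^{-1})$ and $P_\tau^n = P_n/P_\tau$, so the additivity of $z$ on the factorisation $P_n = P_\tau \cdot P_\tau^n$ together with Proposition~\ref{useful_calculation} is precisely the implicit argument. Your write-up simply makes this explicit, and your remark about the divisibility already holding in $\mathbb{Z}[u]$ is a harmless clarification.
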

From~\cite[\S 2]{dipperdu} we know that:
\[P_n(u) = \prod_{i=1}^n \frac{u^i-1}{u-1} = \prod_{i=2}^n (1+\dots + u^{i-1}).\]
We also know that for any $i$
\[u^i-1 = \prod_{d\mid i} \Phi_d(u)\]
where $\Phi_d$ is the $d$-th cyclotomic polynomial. Now denote: 
\begin{equation}
\label{qdef}
Q_i(u) :=1+\dots+u^{i-1} = \prod_{d\mid i, d>1} \Phi_d(u), 
\end{equation} 
so that $P_n(u) = \prod_{i=2}^n Q_i(u)$. As we can write each $P_n$ as a product of cyclotomic polynomials, we only need to compute $z(\Phi_m)$ for $\Phi_m$ involved in $P_n$. 

\subsubsection{Resultants and zeroes of cyclotomic polynomials}

Recall the notion of the resultant $\rho(f,g)$ of two polynomials $f,g\in R[x]$ for some ring $R$, see for example~\cite[\S 2]{apostol}. This has the property that $\rho(f,g) = 0$ if and only if $f$ and $g$ share a common factor. Using~\cite[Theorems 3 and 4]{apostol}, we can compute the resultant of two cyclotomic polynomials. We reproduce these results below. Without loss of generality let $m > n > 1$. Then:
\[ \rho(\Phi_m,\Phi_n) = \rho(\Phi_n,\Phi_m) = \begin{cases} s^{\varphi(n)} & \text{ if }m/n\text{ is a power of some prime }s, \\ 1  & \text{otherwise,} \end{cases}\]
where $\varphi$ is Euler's totient function. This allows us to compute $z(\Phi_n)$ for general $n$. 
\begin{theorem} 
\label{cyc_zero}
Let $q$ have quantum characteristic $e$, and let $n > 1$. Then $\Phi_n(q^{-1}) = 0$ if and only if $n = ep^r$ for some $r \geq 0$. In particular:
\begin{itemize}
\item If $(e,p) = 1$, then $z(\Phi_{ep^r}) = p^r-p^{r-1}$ for $r \geq 1$, and $z(\Phi_e) = 1$.
\item If $e = p$ and $q=1$, then $z(\Phi_{p^r}) = p^{r} - p^{r-1}$ for $r \geq 1$.
\end{itemize}
\end{theorem}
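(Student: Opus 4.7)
The plan is to reduce everything to the well-known factorisation of cyclotomic polynomials in characteristic $p$: if $n = mp^r$ with $\gcd(m,p) = 1$ and $r \geq 1$, then $\Phi_n(x) = \Phi_m(x)^{p^r - p^{r-1}}$ in $F[x]$. I would first establish this (or cite it as standard), by comparing the two factorisations of $x^n - 1 = (x^m - 1)^{p^r}$ (valid in characteristic $p$): on one hand $\prod_{d \mid m}\Phi_d(x)^{p^r}$, on the other $\prod_{d \mid m}\prod_{j=0}^r \Phi_{dp^j}(x)$, and then matching factors by induction on $r$.

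With this in hand, the two admissible cases follow quickly. If $\gcd(e,p) = 1$, then $q^{-1}$ is a primitive $e$-th root of unity in $F$. For any $n > 1$, write $n = mp^r$ with $\gcd(m,p) = 1$, so that $\Phi_n(q^{-1}) = \Phi_m(q^{-1})^{p^r - p^{r-1}}$ (with the convention $\Phi_n = \Phi_m$ for $r = 0$). Since $\gcd(m,p) = 1$, the polynomial $\Phi_m$ is separable over $\overline{F}$ with roots equal to the primitive $m$-th roots of unity, so $\Phi_m(q^{-1}) = 0$ iff $q^{-1}$ has order $m$, iff $m = e$. This gives $n = ep^r$, and the multiplicities $z(\Phi_e) = 1$ and $z(\Phi_{ep^r}) = p^r - p^{r-1}$ for $r \geq 1$ are immediate.

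In the remaining case $e = p$, $q = 1$, one has $q^{-1} = 1$. Writing $n = mp^r$ as above, the factorisation gives $\Phi_n(1) = \Phi_m(1)^{p^r - p^{r-1}}$ for $r \geq 1$; since $1$ is not a primitive $m$-th root of unity for $m > 1$ with $\gcd(m,p)=1$, we get $\Phi_m(1) \neq 0$ in that range. Thus $\Phi_n(1) = 0$ forces $m = 1$, and (using $n > 1$) $r \geq 1$, i.e.\ $n = p^r$, which matches the stated form $n = ep^s$ via $s = r - 1$. The multiplicity $z(\Phi_{p^r}) = p^r - p^{r-1}$ can then be read off directly.

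The proof presents no substantial obstacles — it is essentially a routine application of the characteristic-$p$ behaviour of cyclotomic polynomials. The only bookkeeping point I would watch is the reindexing in the $e = p$ case so the two parameterisations $n = p^r$ ($r \geq 1$) and $n = ep^s$ ($s \geq 0$) are reconciled, and the $r = 0$ edge of the first case (where $q^{-1}$ is a simple, not repeated, root of $\Phi_e$).
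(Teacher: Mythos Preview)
Your proof is correct and takes a genuinely different route from the paper's. For the ``only if'' direction, the paper argues via resultants: if $\Phi_n(q^{-1})=0$ and $n>e$, then $\Phi_n$ and $\Phi_e$ share the factor $(u-q^{-1})$, so $\rho(\Phi_n,\Phi_e)=0$ in $F$; Apostol's formula for resultants of cyclotomic polynomials then forces $n/e$ to be a power of $p$. The multiplicities are obtained afterwards from the recursion $\Phi_{np}(u)=\Phi_n(u^p)$ (or $\Phi_n(u^p)/\Phi_n(u)$) combined with the Frobenius identity $\Phi_e(u^{p^r})=\Phi_e(u)^{p^r}$.

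Your approach bypasses the resultant machinery entirely: you first establish the characteristic-$p$ factorisation $\Phi_{mp^r}=\Phi_m^{\,p^r-p^{r-1}}$ for $\gcd(m,p)=1$, and then use that $\Phi_m$ is separable with roots exactly the primitive $m$-th roots of unity, reducing the vanishing question to whether $q^{-1}$ has multiplicative order $m$. This is more self-contained and yields the multiplicities in the same breath, whereas the paper's route separates the two steps and leans on an external citation for the resultant computation. Both arguments ultimately rest on the same Frobenius behaviour of $x^{mp^r}-1$; yours just organises it so that no extra ingredient is needed.
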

\begin{proof}
First of all, if $n < e$, then $z(\Phi_n) = 0$ as $\Phi_e$ is the smallest cyclotomic polynomial which can be zero at $q^{-1}$. Now suppose $\Phi_n(q^{-1}) = 0$, and $n > e$. Consider the resultant of $\Phi_n$ with $\Phi_e$. This resultant must be zero, as $(u-q^{-1})$ is a common factor of both by assumption. As $n > e$, by the above result from~\cite[Theorems 3, 4]{apostol}, we can only have $\rho(\Phi_n, \Phi_e) = 0$ in $F$ if $n/e$ is a power of $p$, i.e.\ $n = ep^r$ for some $r \geq 1$. Including the possibility when $n = e$, gives one direction of our first assertion.

It remains to show that $\Phi_{ep^r}$ are zero at $q^{-1}$ for all $r\geq 0$, and to compute $z(\Phi_n)$ in these cases. Recall from~\cite[\S1 Equations 4,5]{nagell} that:
\[\Phi_{np}(u) = \begin{cases} \Phi_n(u^p)/\Phi_n(u) & \text{ if } p \nmid n, \\
								\Phi_n(u^p) & \text{ if } p \mid n. \end{cases} \]
Thus when $(e,p) = 1$, and $n = ep^r$ for $r \geq 1$:
\[\Phi_n(u) = \Phi_{ep^r}(u) = \Phi_{ep^{r-1}}(u^p) = \cdots = \Phi_{ep}(u^{p^{r-1}}) = \Phi_e(u^{p^r})/\Phi_e(u^{p^{r-1}}).\]
As $F$ has characteristic $p$:
\[\Phi_n(u) = \Phi_e(u^{p^r})/\Phi_e(u^{p^{r-1}}) = \Phi_e(u)^{p^r}/\Phi_e(u)^{p^{r-1}} = \Phi_e(u)^{p^r-p^{r-1}}.\]
Thus as $\Phi_e(q^{-1}) = 0$, we get that $\Phi_n(q^{-1}) = 0$. As $z(\Phi_e) = 1$ (its roots are the primitive $e$-th roots of unity each with multiplicity one), we also get that $z(\Phi_n) =  p^r-p^{r-1}$ if $n = ep^r$ for $r \geq 1$.

Similarly when $e = p$ and $q = 1$ (so $q = q^{-1}$):
\[\Phi_n(u) = \Phi_{p^r}(u) = \Phi_{p^{r-1}}(u^p) = \cdots = \Phi_p(u^{p^{r-1}}) = \Phi_p(u)^{p^{r-1}}.\]
Thus $z(\Phi_{p^r}) = p^{r-1}z(\Phi_p) = p^r - p^{r-1}$ since $z(\Phi_p) = p-1$.
\end{proof}

\subsubsection{Computing with $z(P_n)$}
\label{subsubfun}

We begin by proving the following preliminary expressions.
\begin{lemma} 
\label{qi_1}
Let $i > 1$. Then:
\begin{itemize}
\item If $(e,p) = 1$:
\[z(Q_i) = \begin{cases} p^r & \text{ if }r\text{ is the largest integer such that }ep^r \mid i, \\ 0 & e \nmid i. \end{cases}\]
\item If $e = p$:
\[z(Q_i) = \begin{cases} p^{r} - 1 & \text{ if }r\text{ is the largest integer such that }p^r \mid i, \\ 0 & p \nmid i. \end{cases}\]
\end{itemize}
\end{lemma}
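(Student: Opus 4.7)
The plan is to combine the factorisation $Q_i(u) = \prod_{d \mid i,\, d > 1} \Phi_d(u)$ from equation~(\ref{qdef}) with Theorem~\ref{cyc_zero}, which tells us exactly which cyclotomic polynomials vanish at $q^{-1}$ and with what multiplicity. Since $z$ is additive on products (the multiplicity of a factor in a product of polynomials is the sum of the multiplicities in each factor), we immediately get
\[ z(Q_i) = \sum_{d \mid i,\, d > 1} z(\Phi_d). \]

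First I would handle the case $(e,p) = 1$. By Theorem~\ref{cyc_zero}, the only divisors $d > 1$ of $i$ that contribute are those of the form $ep^s$ with $s \geq 0$. If $e \nmid i$, no such divisor exists and the sum is empty, giving $z(Q_i) = 0$. If $e \mid i$, letting $r$ be the largest integer with $ep^r \mid i$, the contributing divisors are precisely $e, ep, \dots, ep^r$, so
\[ z(Q_i) = z(\Phi_e) + \sum_{s=1}^{r} z(\Phi_{ep^s}) = 1 + \sum_{s=1}^{r} (p^s - p^{s-1}) = p^r, \]
where the middle sum telescopes.

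For the case $e = p$ (hence $q = 1$), the contributing divisors are the prime-power divisors $p^s$ with $s \geq 1$. If $p \nmid i$, no such divisor exists and $z(Q_i) = 0$. If $p \mid i$, letting $r$ be the largest integer with $p^r \mid i$, we sum over $s = 1, \dots, r$ and telescope:
\[ z(Q_i) = \sum_{s=1}^{r} z(\Phi_{p^s}) = \sum_{s=1}^{r} (p^s - p^{s-1}) = p^r - 1. \]

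There is essentially no obstacle here: the entire content of the lemma is already packaged in Theorem~\ref{cyc_zero} together with the cyclotomic factorisation of $u^i - 1$. The only thing to be careful about is recognising the telescoping behaviour and the off-by-one difference between the two cases, which arises from $z(\Phi_e) = 1$ when $(e,p) = 1$ versus $z(\Phi_p) = p - 1$ when $e = p$, so the base of the telescope contributes $1$ in the first case and $0$ in the second. This explains why the two formulas differ by $1$.
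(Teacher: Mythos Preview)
Your proof is correct and follows essentially the same approach as the paper: both use the factorisation $Q_i = \prod_{d\mid i,\,d>1}\Phi_d$ from (\ref{qdef}), invoke Theorem~\ref{cyc_zero} to identify the contributing factors as $\Phi_e,\Phi_{ep},\dots,\Phi_{ep^r}$ (respectively $\Phi_p,\dots,\Phi_{p^r}$), and then telescope the sum of multiplicities. Your added remarks on the additivity of $z$ and the source of the off-by-one between the two cases are sound but not required.
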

\begin{proof}
This follows from counting the number of zeroes at $q^{-1}$ in the product (\ref{qdef}). When $(e,p) = 1$, then if $e \nmid i$, we have $z(Q_i) = 0$ as no $\Phi_{ep^r}$ appear in the product (\ref{qdef}). Otherwise, if $r$ is the largest integer such that $ep^r \mid i$, then $\Phi_e, \dots, \Phi_{ep^r}$ are the only factors which are zero at $q^{-1}$. Thus:
\[z(Q_i) = 1 + (p - 1) + \dots + (p^r - p^{r-1}) = p^r.\]
If $e = p$, then if $p \nmid i$, there are no zeroes at $q = 1$, otherwise we only have factors $\Phi_p, \dots, \Phi_{p^r}$ which are zero at $q$, where $r$ is the largest integer such that $p^r \mid i$. Thus:
\[z(Q_i) = (p -1) + \dots + (p^r - p^{r-1}) = p^r - 1, \] completing the proof.
\end{proof}
Throughout the rest of this subsection, we will state results for both $(e,p) = 1$ and $e=p$, but will not prove the latter case. This is because the proof follows in exactly the same way, just using the different value of $z(Q_i)$ given above.
\begin{lemma} 
\label{general_1}
Suppose $(e,p) = 1$ and $r$ is the largest integer such that $ep^r \leq n$. Then
\[z(P_n) = \sum_{l = 0}^{r-1} \left( \left\lfloor \frac{n}{ep^l}\right\rfloor - \left\lfloor \frac{n}{ep^{l+1}}\right\rfloor \right)p^l + \left\lfloor \frac{n}{ep^r}\right\rfloor p^r.\]
If $e = p$ and $q = 1$, and $r > 1$ is the largest integer with $p^r \leq n$. Then
\[z(P_n) = \sum_{l=1}^{r-1} \left( \left\lfloor \frac{n}{p^l}\right\rfloor - \left\lfloor \frac{n}{p^{l+1}}\right\rfloor \right)(p^l-1) + \left\lfloor \frac{n}{p^r}\right\rfloor (p^r-1).\]
\end{lemma}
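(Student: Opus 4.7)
The plan is a direct counting argument built on the previous lemma. The starting point is the factorization $P_n(u) = \prod_{i=2}^n Q_i(u)$ recorded in (\ref{qdef}), combined with the observation that $z$ is additive on products: if $f = (u-q^{-1})^a f'$ and $g = (u-q^{-1})^b g'$ with $f', g'$ coprime to $u-q^{-1}$, then $z(fg) = a + b$. Iterating this reduces the problem to evaluating
\[z(P_n) = \sum_{i=2}^n z(Q_i).\]

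When $(e,p) = 1$, Lemma~\ref{qi_1} gives $z(Q_i) = p^{r(i)}$ where $r(i)$ is the largest integer with $ep^{r(i)} \mid i$, and $z(Q_i) = 0$ when $e \nmid i$. I would then partition the index set according to the value of $r(i)$: for each $l \geq 0$, the number of $i \in \{1,\dots,n\}$ with $r(i) = l$ equals
\[\#\{i : ep^l \mid i\} - \#\{i : ep^{l+1} \mid i\} = \left\lfloor \tfrac{n}{ep^l}\right\rfloor - \left\lfloor \tfrac{n}{ep^{l+1}}\right\rfloor,\]
and the lower bound $i \geq 2$ excludes nothing, since any contributing $i$ is at least $e \geq 2$. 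Multiplying by the contribution $p^l$ and summing, using $\lfloor n/(ep^l)\rfloor = 0$ for $l > r$, yields
\[z(P_n) = \sum_{l=0}^{r} \left(\left\lfloor \tfrac{n}{ep^l}\right\rfloor - \left\lfloor \tfrac{n}{ep^{l+1}}\right\rfloor\right) p^l.\]
The $l = r$ term collapses to $\lfloor n/(ep^r)\rfloor p^r$ because $\lfloor n/(ep^{r+1})\rfloor = 0$ by the maximality of $r$, giving the first formula in the statement.

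The case $e = p$ is handled identically in structure: one replaces every occurrence of $ep^l$ by $p^l$ and uses the contribution $p^l - 1$ instead of $p^l$. Since $z(Q_i) = 0$ whenever $p \nmid i$, the summation begins at $l = 1$, producing the second formula. No real obstacle is anticipated; the argument is essentially inclusion-exclusion on divisibility, and the only care needed is in the bookkeeping at the top index $l = r$, where one must check that the omitted $l = r+1$ term contributes nothing and hence that the finite sum is correct.
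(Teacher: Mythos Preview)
Your proposal is correct and follows essentially the same approach as the paper: both arguments expand $z(P_n)=\sum_{i=2}^n z(Q_i)$ using Lemma~\ref{qi_1}, partition the contributing indices $i$ according to the largest $l$ with $ep^l\mid i$ (respectively $p^l\mid i$), and count each stratum as $\lfloor n/(ep^l)\rfloor-\lfloor n/(ep^{l+1})\rfloor$. Your bookkeeping remarks about the bottom index $i\geq 2$ and the vanishing of the $l=r+1$ term are precisely the details the paper leaves implicit.
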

\begin{proof}
Recall that $P_n = \prod_{i=2}^n Q_i$, hence $z(P_n) = \sum_{i=2}^n z(Q_i)$. Now each $Q_i$ contributes either no zeroes if no $ep^l$ divides $i$ or $z(Q_i)$ zeroes if it does. If it contributes zeroes, it contributes according to the largest $l$ such that $ep^l \mid i$. Hence we need to count how many times this occurs. For each $l$, the number of times that $ep^l$ divides $n$ is $\lfloor \frac{n}{ep^l}\rfloor$. In $\lfloor \frac{n}{ep^{l+1}}\rfloor$ of those times, we also have $ep^{l+1}$ dividing $n$. Hence the total number of times $l$ is the largest integer such that $ep^l$ divides $i$ for $i = 2, \dots, n$ is $\lfloor \frac{n}{ep^l}\rfloor - \lfloor \frac{n}{ep^{l+1}}\rfloor$ for $0 \leq l \leq r-1$, or $\lfloor \frac{n}{ep^r}\rfloor$ when $l = r$. Summing all these occurrences of zeroes and using the values from Lemma~\ref{qi_1}, gives the result as required.
\end{proof}
Recall from the introduction that the $e$-$p$-adic expansion of $n\in \mathbb{N}$ is the unique decomposition of $n$ as:
\[ n = a_{-1} + a_0 e + a_1 ep + \dots a_r ep^r \]
where $0 \leq a_{-1} < e$ and $0 \leq a_i < p$ for $i = 0, \dots, r$. If $e = p$, the $e$-$p$-adic expansion is just the usual $p$-adic expansion, and we will simplify notation in this setting by writing 
\[ n = b_{0} + b_1 p + \dots b_r p^r\] where $0 \leq b_i < p$ for $i = 0, \dots, r$. The previous lemma lets us compute $z(P_n)$ based on these expansions:
\begin{theorem}
\label{formula_1}
Suppose $(e,p) = 1$. Let $n > 1$ and write $n = a_{-1} + a_0e + a_1ep + \dots + a_r ep^r$ its $e$-$p$-adic expansion. Then:
\[z(P_n) = a_0 + \sum_{l = 1}^r a_l\Big((l + 1)p^l - lp^{l-1}\Big).\]
Suppose $e = p$ and $q = 1$. Let $n > 1$ and write $n = b_0 + b_1p + \dots b_rp^r$ its $p$-adic expansion. Then:
\[z(P_n) = \sum_{l = 1}^r b_l l (p^l - p^{l-1}).\]
\end{theorem}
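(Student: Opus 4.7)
The plan is to substitute the digits of the $e$-$p$-adic expansion of $n$ directly into Lemma~\ref{general_1}, and then collapse the resulting double sum by interchanging the order of summation.

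In the case $(e,p)=1$, write $n = a_{-1} + a_0 e + a_1 ep + \cdots + a_r ep^r$. The $e$-$p$-adic expansion gives the explicit formula $\lfloor n/(ep^l)\rfloor = \sum_{j=l}^r a_j p^{j-l}$ for $0 \leq l \leq r$, and $\lfloor n/(ep^{r+1})\rfloor = 0$. Taking differences yields
\[ \lfloor n/(ep^l)\rfloor - \lfloor n/(ep^{l+1})\rfloor = a_l + \sum_{j=l+1}^r a_j (p^{j-l} - p^{j-l-1}) \]
for $0 \leq l \leq r-1$. Substituting into the formula from Lemma~\ref{general_1} and interchanging the order of summation (so that $j$ becomes the outer index), the key observation is that $(p^{j-l} - p^{j-l-1})p^l = p^j - p^{j-1}$, independent of $l$. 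Hence the inner sum over $l$ just counts, giving $\sum_{l=0}^{j-1}(p^j-p^{j-1}) = j(p^j - p^{j-1})$. The remaining ``diagonal'' terms $a_l p^l$ together with the separate endpoint term $a_r p^r$ collect into $\sum_{l=0}^r a_l p^l$. Combining,
\[ z(P_n) = \sum_{l=0}^r a_l p^l + \sum_{l=1}^r l\,a_l(p^l - p^{l-1}) = a_0 + \sum_{l=1}^r a_l\bigl((l+1)p^l - l p^{l-1}\bigr). \]

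The case $e = p$ proceeds by exactly the same strategy applied to the second formula in Lemma~\ref{general_1}: substitute $\lfloor n/p^l\rfloor = \sum_{j=l}^r b_j p^{j-l}$, form the telescoping differences, and swap the order of summation. The bookkeeping is slightly more involved because the weights are $(p^l - 1)$ rather than $p^l$, so the inner sum for fixed $j$ becomes $\sum_{l=1}^{j-1}(p^{j-l} - p^{j-l-1})(p^l - 1)$, which after expanding splits into $(j-1)(p^j - p^{j-1})$ and a telescoping piece $-(p^{j-1}-1)$; combining with the $b_j(p^j - 1)$ contribution from the ``diagonal'' exactly produces $j b_j(p^j - p^{j-1})$, as required.

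The argument is not deep, but the main obstacle is careful bookkeeping: keeping track of the separate $l = r$ endpoint term in Lemma~\ref{general_1}, verifying that $a_{-1}$ (respectively $b_0$) does not contribute (consistent with $z(Q_i)=0$ for $e\nmid i$), and confirming that the telescoping identities collapse the way one wants. Once the index manipulation is done cleanly, both identities drop out directly.
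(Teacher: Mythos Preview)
Your proposal is correct and follows essentially the same approach as the paper: both proofs substitute the $e$-$p$-adic digits into the floor expressions from Lemma~\ref{general_1} and then rearrange the resulting double sum. The only difference is bookkeeping---the paper collects terms via the differences $a_l - a_{l+1}$ before simplifying, while you swap the order of summation directly using the observation $(p^{j-l}-p^{j-l-1})p^l = p^j - p^{j-1}$; your route is arguably a bit cleaner, but the underlying computation is the same.
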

\begin{proof}
To get this result from Lemmas~\ref{qi_1} and~\ref{general_1}, we first compute $\lfloor \frac{n}{ep^l}\rfloor - \lfloor \frac{n}{ep^{l+1}}\rfloor$ for $0 \leq l \leq r-1$.
\begin{align*}\left(\left\lfloor \frac{n}{ep^l}\right\rfloor - \left\lfloor \frac{n}{ep^{l+1}}\right\rfloor\right) &= a_l + a_{l+1}p + \dots + a_r p^{r-l} - (a_{l+1} + a_{l+2}p + \dots + a_r p^{r-l-1}) \\
 & = (a_l - a_{l+1}) + (a_{l+1} - a_{l+2})p + \dots (a_{r-1} - a_r)p^{r-l-1} + a_rp^{r-l}.
\end{align*}
Collecting terms by the $a_l - a_{l+1}$ in the sum gives us:
\[z(P_n) = \sum_{l = 0}^{r-1} \Big((a_l - a_{l+1})(z(Q_{ep^l}) + pz(Q_{ep^{l-1}}) + \dots + p^lz(Q_{ep^0})) + a_rp^{r-l}z(Q_{ep^l})\Big) + a_r p^r,\]
and using the fact that $z(Q_{ep^j}) = p^j$ for all $j\geq 0$, this expression simplifies to
\begin{align*}z(P_n) &= \sum_{l=0}^{r-1} \Big((a_l-a_{l+1})(p^l + \dots + p^l) + \dots a_r p^r\Big)  + a_r p^r \\
&= \sum_{l = 0}^{r-1} \Big( (l+1)(a_l - a_{l+1})p^l \Big) + (r+1) a_r p^r\\
&= a_0 - a_1 + \left(\sum_{l = 1}^{r-1} a_l(l + 1)p^l\right) - \left( \sum_{l = 1}^{r-1} a_{l+1}(l + 1)p^l \right) + (r + 1)a_r p^r\\ 
&= a_0 + \sum_{l = 1}^{r} a_l\Big((l + 1)p^l - lp^{l-1}\Big),\end{align*}
if we collect by the coefficients $a_i$.
\end{proof}
\begin{corollary}
\label{sign_n_1}
If $(e,p) = 1$ and $r\geq 0$, then $z(P_{ep^r}) = (r+1)p^r - rp^{r-1}$.
\\
\\ If $e = p$ and $r \geq 1$, then $z(P_{p^r}) = r(p^r - p^{r-1})$.
\end{corollary}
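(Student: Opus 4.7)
The plan is to apply Theorem~\ref{formula_1} directly to the partitions at hand, since the $e$-$p$-adic expansions of $ep^r$ and the $p$-adic expansion of $p^r$ are as simple as possible.

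For the first statement, suppose $(e,p)=1$ and $r \geq 0$. The $e$-$p$-adic expansion of $n = ep^r$ is
\[ ep^r = 0 + 0 \cdot e + 0 \cdot ep + \dots + 0 \cdot ep^{r-1} + 1 \cdot ep^r, \]
so $a_{-1} = 0$, $a_l = 0$ for $0 \leq l \leq r - 1$, and $a_r = 1$. Feeding this into the formula
\[ z(P_n) = a_0 + \sum_{l=1}^{r} a_l\bigl((l+1)p^l - l p^{l-1}\bigr) \]
from Theorem~\ref{formula_1}, all terms vanish except the $l = r$ term, leaving $z(P_{ep^r}) = (r+1)p^r - r p^{r-1}$. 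When $r = 0$, the sum is empty and we read off $z(P_e) = a_0 = 1 = (0+1)p^0 - 0 \cdot p^{-1}$, consistent with the stated formula.

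For the second statement, suppose $e = p$ and $r \geq 1$. The $p$-adic expansion of $n = p^r$ is simply $b_r = 1$ and $b_l = 0$ for $l < r$. Plugging into
\[ z(P_n) = \sum_{l=1}^{r} b_l \, l \bigl(p^l - p^{l-1}\bigr) \]
from Theorem~\ref{formula_1}, only the $l = r$ term survives, yielding $z(P_{p^r}) = r(p^r - p^{r-1})$.

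There is no obstacle: the corollary is purely a specialization of the formulas in Theorem~\ref{formula_1} to the case where the expansion has a single nonzero coefficient. The only minor point is to note that in the first case the edge $r=0$ is handled by the standalone $a_0$ term rather than by the sum.
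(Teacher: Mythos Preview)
Your proposal is correct and matches the paper's approach: the corollary is stated without proof, as an immediate specialization of Theorem~\ref{formula_1} to the case where the $e$-$p$-adic (respectively $p$-adic) expansion has a single nonzero coefficient, exactly as you carry out.
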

We can use Theorem~\ref{formula_1} to show that if $\lambda$ is the partition corresponding to the standard maximal $e$-$p$-parabolic subgroup of $\symgp_n$, then $S^{(1^n)}$ is $\symgp_\lambda$-projective.
\begin{proposition}
\label{upper_1}
Let $n > 1$, and denote by $\lambda$, the composition of $n$ corresponding to its $e$-$p$-adic expansion. Then $S^{(1^n)}$ is $\symgp_\lambda$-projective. 
\end{proposition}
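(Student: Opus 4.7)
The plan is to apply the criterion from Corollary~\ref{proj_test}: it suffices to show that $z(P_n) = z(P_\lambda)$, at which point relative $\symgp_\lambda$-projectivity of $S^{(1^n)}$ follows immediately.

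First I would unpack $P_\lambda$ in terms of Poincar\'e polynomials of the factors of $\symgp_\lambda$. Since $\lambda = (1^{a_{-1}}, e^{a_0}, (ep)^{a_1}, \dots, (ep^r)^{a_r})$ by definition of the $e$-$p$-adic expansion, we have $\symgp_\lambda \cong \symgp_1^{a_{-1}} \times \symgp_e^{a_0} \times \symgp_{ep}^{a_1} \times \cdots \times \symgp_{ep^r}^{a_r}$, and therefore
\[
P_\lambda(u) = P_1(u)^{a_{-1}} \prod_{l=0}^{r} P_{ep^l}(u)^{a_l}.
\]
Since $P_1 = 1$ contributes nothing and zeros are additive across products, we get
\[
z(P_\lambda) = \sum_{l=0}^{r} a_l\, z(P_{ep^l}).
\]
(The same reduction works when $e = p$, with $ep^l$ replaced by $p^l$ and the $l=0$ term omitted since $z(P_1) = 0$.)

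Next I would substitute the explicit values from Corollary~\ref{sign_n_1}. When $(e,p) = 1$, Corollary~\ref{sign_n_1} gives $z(P_{ep^l}) = (l+1)p^l - lp^{l-1}$ for $l \geq 0$ (with the understanding that the $l=0$ term is just $1$), so
\[
z(P_\lambda) = a_0 + \sum_{l=1}^{r} a_l\bigl((l+1)p^l - l p^{l-1}\bigr).
\]
Comparing this with the formula for $z(P_n)$ in Theorem~\ref{formula_1} shows the two expressions are literally identical, so $z(P_n) = z(P_\lambda)$. The analogous comparison in the $e = p$ case uses $z(P_{p^l}) = l(p^l - p^{l-1})$ from Corollary~\ref{sign_n_1}, and the $p$-adic version of Theorem~\ref{formula_1} matches it term by term.

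Finally I would invoke Corollary~\ref{proj_test} to conclude that $S^{(1^n)}$ is $\symgp_\lambda$-projective. There is no real obstacle here; the heavy lifting was already done in Subsection~\ref{subsubfun} when establishing the closed formulas for $z(P_n)$ and in Corollary~\ref{sign_n_1} for $z(P_{ep^l})$. The only thing to double-check is the bookkeeping at $l=0$ in both cases, and that $\symgp_\lambda$ genuinely decomposes as the product claimed, which is immediate from the definition of the standard maximal $e$-$p$-parabolic subgroup.
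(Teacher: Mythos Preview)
Your proposal is correct and follows essentially the same approach as the paper: both show $z(P_n) = z(P_\lambda)$ by factoring $P_\lambda = \prod_{l} (P_{ep^l})^{a_l}$, computing $z(P_\lambda) = \sum_l a_l\, z(P_{ep^l})$ via Corollary~\ref{sign_n_1}, matching the result against Theorem~\ref{formula_1}, and concluding with Corollary~\ref{proj_test}. The paper only writes out the $(e,p)=1$ case explicitly, whereas you sketch both, but the content is the same.
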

\begin{proof} Again, we will only prove this when $(e, p) = 1$. We show that $z(P_n) = z(P_\lambda)$. We already have a formula for $z(P_n)$, so we compute $z(P_\lambda)$. As $P_\lambda = \prod_{i=0}^r (P_{ep^i})^{a_i}$:
\begin{align*} z(P_\lambda) &= \sum_{i = 0}^r a_i z(P_{ep^i}) \\
				 &= a_0 + \sum_{i= 1}^ r \Big(a_i (i+1)p^i - a_i i p^{i-1} \Big)\\
				 &= z(P_n). 
				 \end{align*}
Applying Corollary~\ref{proj_test} gives the result.
\end{proof}
So we have obtained an upper bound for the vertex of $S^{(1^n)}$ for general $n$. We now prove the special case of the vertex of $S^{(1^n)}$ where $n = ep^r$ for some $r \geq 0$. By~\cite[Theorem 2.9]{dipperdu} the vertex of $S^{(1^n)}$ is $e$-$p$-parabolic, so these are the only $\tau$ we need to check.
\begin{lemma}
\label{sign_vertex_1}
Suppose either $(e, p) = 1$ and $n = ep^r$ for $r \geq 0$, or $e = p$ and $n = p^r$ for $r > 0$. Then for any $e$-$p$-parabolic subgroup $\symgp_\tau\subsetneq \symgp_n$ of $\symgp_n$, $z(P_n) > z(P_\tau)$. Hence $S^{(1^n)}$ has vertex $\symgp_n$.
\end{lemma}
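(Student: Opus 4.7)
The plan is to verify the inequality $z(P_n) > z(P_\tau)$ directly, after which Corollary~\ref{proj_test} tells us $S^{(1^n)}$ is not $\symgp_\tau$-projective for any proper $e$-$p$-parabolic subgroup, and then the cited result~\cite[Theorem 2.9]{dipperdu} (that the vertex of $S^{(1^n)}$ is an $e$-$p$-parabolic subgroup of $\symgp_n$) forces the vertex to be $\symgp_n$ itself. So the whole proof reduces to an explicit numerical comparison of $z(P_n)$ with $z(P_\tau)$ using the formulae already in hand.

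First I would note that the parts of a proper $e$-$p$-parabolic $\symgp_\tau \subsetneq \symgp_n$ are heavily restricted: each $\tau_i$ is either $1$ (contributing $z(P_1)=0$) or of the form $ep^{s_i}$ (resp.\ $p^{s_i}$ if $e=p$), and since a single part equal to $n$ would force $\tau=(n)$ and $\symgp_\tau=\symgp_n$, we must have $s_i \leq r-1$ for every non-trivial part. In particular $\sum p^{s_i} \leq p^r$.

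Next I would carry out an efficiency argument. Write $f(s) := z(P_{ep^s}) = (s+1)p^s - sp^{s-1}$ (using Corollary~\ref{sign_n_1}), so that
\[
\frac{f(s)}{p^s} = (s+1) - \frac{s}{p}
\]
is strictly increasing in $s$. Since $s_i \leq r-1$ for every non-trivial part,
\[
z(P_\tau) \;=\; \sum_i f(s_i) \;=\; \sum_i \frac{f(s_i)}{p^{s_i}} \, p^{s_i} \;\leq\; \frac{f(r-1)}{p^{r-1}} \sum_i p^{s_i} \;\leq\; \frac{f(r-1)}{p^{r-1}} \, p^r \;=\; p\,f(r-1).
\]
A direct calculation then gives
\[
f(r) - p\,f(r-1) \;=\; \bigl((r+1)p^r - rp^{r-1}\bigr) - \bigl(rp^r - (r-1)p^{r-1}\bigr) \;=\; p^r - p^{r-1} \;>\; 0,
\]
so $z(P_\tau) \leq p\,f(r-1) < f(r) = z(P_n)$, as required.

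For the case $e=p$, $n=p^r$ (with $r\geq 1$) I would run the parallel calculation: the relevant sizes are $p^{t_i}$ with $1 \leq t_i \leq r-1$, and setting $g(t) := z(P_{p^t}) = t(p^t - p^{t-1})$ one checks that $g(t)/p^t = t(1-1/p)$ is strictly increasing, giving the same bound $z(P_\tau) \leq p\,g(r-1) = (r-1)(p^r - p^{r-1}) < r(p^r - p^{r-1}) = z(P_n)$. There is no real obstacle here --- the argument is purely arithmetic, and the only thing to be careful about is ensuring the efficiency inequality is applied uniformly across both the coprime-quantum-characteristic case and the $e=p$ case, and that the reduction to ``all non-trivial parts bounded above by $ep^{r-1}$ (resp.\ $p^{r-1}$)'' really does follow from properness of $\symgp_\tau$ in $\symgp_n$.
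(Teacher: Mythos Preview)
Your argument is correct and follows the same overall line as the paper: compute $z(P_\tau)=\sum_i z(P_{\tau_i})$ via Corollary~\ref{sign_n_1}, bound it below $z(P_n)$ using the size constraint $\sum\tau_i=n$, and conclude with Corollary~\ref{proj_test} and \cite[Theorem 2.9]{dipperdu}. The paper organises the bound by collecting parts of equal size into multiplicities $a_0,\dots,a_t$ (with $t<r$) and estimating $\sum_i a_i p^i\le p^r$ and $\sum_{i\ge 1} a_i\, i\, p^{i-1}<r\sum_{i\ge 1}a_i p^{i-1}\le rp^{r-1}$ separately; your efficiency-ratio formulation (observing that $f(s)/p^s$ is strictly increasing, whence $z(P_\tau)\le p\,f(r-1)<f(r)$) is a tidier packaging of the same estimate, and in fact sidesteps a small edge case in the paper's chain when every non-trivial part of $\tau$ has size exactly $e$ (there the paper's strict step degenerates to $0<0$, though the conclusion is easily recovered). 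One small point worth making explicit: the case $r=0$ with $(e,p)=1$ should be handled on its own, since then any proper $e$-$p$-parabolic has only parts of size $1$ and $z(P_\tau)=0<1=z(P_e)$ directly, while the quantity $f(r-1)$ in your bound is undefined.
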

\begin{proof}
We once again will only prove this statement when $(e,p) = 1$. Let $\symgp_{\tau}$ be the $e$-$p$-parabolic subgroup corresponding to the expression $n = a_{-1} + a_0 e + \dots + a_t ep^t$ for natural numbers $a_i$. This no longer has to be a reduced expression, but as $\symgp_\tau \subsetneq \symgp_n$ we have in particular that $t < r$. Then we have by Corollary~\ref{sign_n_1} that:
\begin{align*} z(P_\tau) &= a_0 + \sum_{i=1}^ t\Big( a_i(i+1) p^i - a_i i p^{i-1}\Big) \\
				&= \sum_{i=0}^t a_i p^i + \sum_{i=1}^ t a_i i p^{i-1}(p-1). \end{align*}
As $n = a_{-1}+ \sum_{i=1}^t a_i ep^i = ep^r$, we get immediately that $\sum_{i=0}^t a_i p^i \leq p^r$, and hence $\sum_{i=1}^t a_i p^{i-1} \leq p^{r-1}$. This tells us that
\begin{align*} z(P_\tau) &= \sum_{i=0}^t a_i p^i + \sum_{i=1}^ t a_i i p^{i-1}(p-1) \\
				&\leq p^r + (p-1)\sum_{i = 1}^t a_i i p^{i-1} \\
                &< p^r + r(p-1)\sum_{i = 1}^t a_i p^{i-1} \\
                &\leq p^r + r(p-1)p^{r-1} \\
                &= z(P_n). \end{align*}
Thus if $\symgp_\tau \subsetneq \symgp_n$, we have $N_\tau$ is zero and the vertex of $S^{(1^n)}$ as an $\hecke_n$-module must be $\symgp_n = \symgp_{ep^r}$.
\end{proof}

\subsubsection{Computing the vertex of $S^{(1^n)}$}

We can now compute the vertex of $S^{(1^n)}$ for $n\geq 1$ in both cases on $e$ and $p$.
\begin{theorem}
\label{sign_vertex}
Let $n\geq 1$. Then $S^{(1^n)}$ has vertex $\symgp_\lambda$ as a $\hecke_n$-module, where $\symgp_\lambda$ is the standard maximal $e$-$p$ parabolic subgroup of $\symgp_n$.
\end{theorem}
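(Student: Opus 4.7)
The plan is to combine the upper bound already established by Proposition~\ref{upper_1} with a matching lower bound obtained by restricting $S^{(1^n)}$ to $\hecke_\lambda$ and applying the tensor product formula for vertices. Write the $e$-$p$-adic expansion of $n$ as $n = a_{-1} + a_0 e + a_1 e p + \dots + a_t e p^t$, so that $\symgp_\lambda = \symgp_1^{a_{-1}} \times \symgp_e^{a_0} \times \symgp_{ep}^{a_1} \times \cdots \times \symgp_{ep^t}^{a_t}$. Proposition~\ref{upper_1} shows that $S^{(1^n)}$ is $\symgp_\lambda$-projective, so its vertex is contained in $\symgp_\lambda$ up to $\symgp_n$-conjugation; by \cite[Theorem 2.9]{dipperdu} this vertex is also $e$-$p$-parabolic. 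Fix a representative $\symgp_\tau \subseteq \symgp_\lambda$ of it.

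The next step is to compute the vertex of the restriction $S^{(1^n)}|_{\hecke_\lambda}$ as an $\hecke_\lambda$-module. Under the identification $\hecke_\lambda = \hecke_1^{\otimes a_{-1}} \otimes \hecke_e^{\otimes a_0} \otimes \cdots \otimes \hecke_{ep^t}^{\otimes a_t}$, every generator $T_i$ of $\hecke_\lambda$ acts as multiplication by $-1$ on both sides of
\[ S^{(1^n)}|_{\hecke_\lambda} \;\cong\; (S^{(1^e)})^{\otimes a_0} \otimes (S^{(1^{ep})})^{\otimes a_1} \otimes \cdots \otimes (S^{(1^{ep^t})})^{\otimes a_t}, \]
the $\hecke_1 = F$ tensor factors contributing trivially. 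This module is one-dimensional, hence simple, hence indecomposable. Lemma~\ref{sign_vertex_1} gives each $S^{(1^{ep^i})}$ the full vertex $\symgp_{ep^i}$ as an $\hecke_{ep^i}$-module, and iterating Theorem~\ref{vertex_tensor} then yields that the $\hecke_\lambda$-vertex of $S^{(1^n)}|_{\hecke_\lambda}$ is exactly $\symgp_\lambda$.

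To conclude, I invoke the module analogue of Lemma~\ref{3.2i} proved in \cite[Lemma 3.2]{du}: since $S^{(1^n)}$ is an indecomposable $\hecke_n$-module with vertex $\symgp_\tau$ lying in $\symgp_\lambda$, some indecomposable $\hecke_\lambda$-summand of $S^{(1^n)}|_{\hecke_\lambda}$ has $\hecke_\lambda$-vertex $\symgp_\tau$. But $S^{(1^n)}|_{\hecke_\lambda}$ is itself indecomposable, so this forces $\symgp_\tau$ and $\symgp_\lambda$ to be $\symgp_\lambda$-conjugate, and a comparison of orders with $\symgp_\tau \subseteq \symgp_\lambda$ then yields $\symgp_\tau = \symgp_\lambda$. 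The main obstacle is the bookkeeping between module and bimodule vertex notions: we must cite the correct (module) version of Du's lemma rather than the bimodule variant proved here, and verify that \cite[Theorem 2.9]{dipperdu} applies uniformly in both cases $(e,p)=1$ and $e=p$. Once those citations are in place, the argument is just a packaging of Proposition~\ref{upper_1}, Lemma~\ref{sign_vertex_1}, and the tensor product formula Theorem~\ref{vertex_tensor}.
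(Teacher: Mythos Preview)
Your proof is correct and takes a somewhat different route from the paper's own argument for this theorem. The paper argues directly with Poincar\'e polynomial zero-counts: assuming a vertex $\symgp_\tau \subsetneq \symgp_\lambda$, it factors $\tau$ along the parts of $\lambda$, observes $z(P_{\lambda_i}) \geq z(P_{\tau^{(i)}})$ for every $i$, and then uses Lemma~\ref{sign_vertex_1} together with Corollary~\ref{proj_test} to force a strict inequality on at least one factor, contradicting $z(P_\lambda)=z(P_\tau)$. You instead restrict $S^{(1^n)}$ to $\hecke_\lambda$, identify the restriction as a tensor product of sign modules, compute its $\hecke_\lambda$-vertex via Theorem~\ref{vertex_tensor} and Lemma~\ref{sign_vertex_1}, and then appeal to \cite[Lemma 3.2]{du} on the (one-dimensional, hence indecomposable) restriction. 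This is precisely the style of argument the paper itself uses in the characteristic-zero case (Theorem~\ref{sign_zero}); your version shows that the same packaging works uniformly in characteristic $p$ once Lemma~\ref{sign_vertex_1} is in hand, and it sidesteps the final arithmetic bookkeeping with $z(P_\tau)$. One small remark: your citation of \cite[Theorem 2.9]{dipperdu} is superfluous here, since the rest of your argument never uses that $\symgp_\tau$ is $e$-$p$-parabolic, only that a representative can be chosen inside $\symgp_\lambda$, which already follows from Proposition~\ref{upper_1} and the definition of a vertex.
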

\begin{proof} Proposition~\ref{upper_1} gives $\symgp_\lambda$ as an upper bound for the vertex. Now suppose that $S^{(1^n)}$ has vertex $\symgp_\tau$ which is strictly contained in $\symgp_\lambda$. We can assume that $\symgp_\tau$ is $e$-$p$-parabolic by~\cite[Theorem 2.9]{dipperdu}. Then by Corollary~\ref{proj_test}, $z(P_n) = z(P_\tau)$, and in particular $z(P_\lambda) = z(P_\tau)$.

Writing $\lambda = (\lambda_1, \dots, \lambda_s)$, as $\symgp_\tau \subset \symgp_\lambda$, there exist compositions $\tau^{(i)}$ such that $\tau^{(i)} \vDash \lambda_i$ and $\prod_{i=1}^s \symgp_{\tau^{(i)}} \cong \symgp_\tau$. 

For each $i$, as $P_{\lambda_i}/P_{\tau^{(i)}}$ is a non-zero polynomial with coefficients in $\mathbb{Z}/p\mathbb{Z}$, we have $z(P_{\lambda_i}) \geq z(P_{\tau^{(i)}})$. As $\symgp_\tau$ is strictly contained in $\symgp_\lambda$, then there exists some $j$ with $\symgp_{\tau^{(j)}} \subsetneq \symgp_{\lambda_{j}}$. Since $S^{1^{(\lambda_j)}}$ is not $\symgp_{\tau^{(j)}}$-projective by Lemma~\ref{sign_vertex_1}, applying Corollary~\ref{proj_test} tells us that $z(P_{\lambda_j}) > z(P_{\tau^{(j)}})$. Thus $z(P_\lambda) > z(P_\tau)$, giving a contradiction. Hence we must have that $\symgp_\tau$ cannot be strictly contained in $\symgp_\lambda$, and thus $\symgp_\lambda$ must be the vertex of $S^{(1^n)}$ as a $\hecke_n$-module.
\end{proof}

\subsection{Relative projectivity of empty core blocks}

Here we prove an upper bound for the vertex of blocks with empty core. We cannot fully generalise Theorem~\ref{hde_proj}, however we can state a similar theorem which only covers the block itself. Denote the central primitive idempotent associated to the block $B$ by $e_B$, and let $\End{B}{B}$ be the ring of $(B,B)$-bimodule homomorphisms on $B$. This is a local ring by~\cite[Theorem 4.2]{alperin} as $B$ is an indecomposable $(B,B)$-bimodule. Furthermore, as in the proof of~\cite[Lemma 2.3]{du}, $\End{B}{B}\cong Z(B)$, and hence $Z(B)$ is local. Thus $x\in Z(B)$ is invertible if and only if its image $\overline{x}\in Z(B)/J(Z(B))$ is non-zero (in a local ring, the Jacobson radical consists of all the non-units).

As we have a canonical isomorphism $\theta: Z(B)/J(Z(B))\to F$, the action of $Z(B)/J(Z(B))$ on a one-dimensional $Z(B)$-module $M$ must coincide with the action of the field, i.e. for $x\in Z(B)$ and $m\in M$, if $m\overline{x} = \beta m$, then $\theta(\overline{x}) = \beta$. Thus $x$ is invertible in $Z(B)$ if and only if $\beta \neq 0$.

Denote $B = B_{\emptyset, d}$ the block of $\hecke_{de}$ with empty core and $e$-weight $d$, let $\symgp_\lambda$ be the standard maximal $e$-$p$-parabolic subgroup of $\symgp_{de}$, and define
\[x_B := \trace_{\lambda}^{(de)}(e_B) = \sum_{w \in \mathscr{R}_{\lambda}^{(de)}} q^{-\ell(w)} T_{w^{-1}} e_B T_w.\]
\begin{lemma}
$x_B$ is invertible in $Z(B)$, and hence in $B$.
\end{lemma}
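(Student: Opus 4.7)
The plan is to exploit the framework set up in the paragraph just before the lemma: since $Z(B)$ is local with residue field canonically isomorphic to $F$, an element $x \in Z(B)$ is invertible precisely when it acts nontrivially on some one-dimensional $Z(B)$-module, and any simple $B$-module furnishes such a module via Schur's lemma. So my strategy is to find a convenient simple module $D$ in $B$, compute the scalar by which $x_B$ acts on $D$, and check that this scalar is nonzero.

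First I would confirm that $x_B \in Z(B)$: the bullet points recalled from \cite[Proposition 2.13]{jones} give $\trace_\lambda^{(de)}(e_B) \in Z(\hecke_{de})$, and since $e_B$ is central each summand $T_{w^{-1}} e_B T_w = e_B T_{w^{-1}}T_w$ lies in $B$, so $x_B \in Z(\hecke_{de}) \cap B = Z(B)$.

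The natural choice of $D$ is the sign module $D = S^{(1^{de})}$. I would verify that (a) $(1^{de})$ is $e$-restricted (trivial, as $e > 1$), so by Lemma~\ref{indecompspecht} and \cite[Theorem 3.43]{mathas} the Specht module equals the simple head $D^{(1^{de})}$; and (b) the $e$-core of $(1^{de})$ is empty with $e$-weight $d$, which follows by repeatedly removing the bottom $e$ boxes of the column, so $D$ lies in $B = B_{\emptyset,d}$. In particular $\epsilon \cdot e_B = \epsilon$ for a generator $\epsilon$ of $D$.

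Next I would compute the scalar. Using centrality of $e_B$ and the action $T_i \cdot \epsilon = -\epsilon$ of the generators on the sign module,
\[
\epsilon \cdot x_B \;=\; \sum_{w \in \mathscr{R}_\lambda^{(de)}} q^{-\ell(w)}\, \epsilon\, e_B\, T_{w^{-1}} T_w \;=\; \sum_{w \in \mathscr{R}_\lambda^{(de)}} q^{-\ell(w)} (-1)^{2\ell(w)}\, \epsilon \;=\; N_\lambda \, \epsilon,
\]
so $x_B$ acts on $D$ by the scalar $N_\lambda$. By Proposition~\ref{upper_1}, the sign module $S^{(1^{de})}$ is $\symgp_\lambda$-projective (since $\lambda$ is the composition coming from the $e$-$p$-adic expansion of $de$), and therefore by Corollary~\ref{proj_test} we have $N_\lambda \neq 0$. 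Consequently the image of $x_B$ in $Z(B)/J(Z(B)) \cong F$ equals $N_\lambda \neq 0$, so $x_B$ is a unit in $Z(B)$, hence in $B$.

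The only place that required real input beyond unwinding definitions is the nonvanishing of $N_\lambda$, and that is precisely what the careful polynomial computations of Subsection~\ref{subsubfun} were designed to provide. No other obstacle arises.
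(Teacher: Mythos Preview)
Your proof is correct and follows essentially the same approach as the paper: both act by $x_B$ on the sign module $S^{(1^{de})}$ (which lies in $B$), compute the resulting scalar to be $N_\lambda$, and then invoke the $\symgp_\lambda$-projectivity of the sign module (Proposition~\ref{upper_1} together with Proposition~\ref{useful_calculation}/Corollary~\ref{proj_test}) to conclude $N_\lambda\neq 0$. The only differences are cosmetic---you include the explicit checks that $x_B\in Z(B)$ and that $S^{(1^{de})}$ lies in $B_{\emptyset,d}$, which the paper leaves implicit.
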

\begin{proof}
Take $S^{(1^{de})} = \langle \epsilon \rangle$, the one-dimensional sign $\hecke_{de}$-module. We now compute $\epsilon\cdot x_B$. As multiplication by $e_B$ is the identity map, using the same calculations from the proof of Proposition~\ref{useful_calculation} we obtain
\[ \epsilon\cdot x_B = \trace_{\lambda}^{(de)}(\epsilon) = \left(\sum_{w\in \mathscr{R}_{\lambda}^{(de)}} q^{-\ell(w)} \right) \epsilon.\]
Hence under the isomorphism between $Z(B)/J(Z(B))$ and $F$: 
\[\theta(\overline{x_B}) = \sum_{w\in \mathscr{R}_\lambda^{(de)}}q^{-\ell(w)} = N_\lambda\]
which is non-zero by Proposition~\ref{useful_calculation} and Proposition~\ref{upper_1}. Thus by the preceding discussion, $x_B$ is invertible in $Z(B)$, and hence in $B$.
\end{proof}
We can now generalise the proof of Theorem~\ref{hde_proj} to the characteristic $p$ case, only focusing on the empty core block.
\begin{theorem}
\label{dir_summand_B}
Let $B = B_{\emptyset,d}$ the empty core block of $\hecke_{de}$, and $\lambda\vDash de$ the composition corresponding to the standard maximal $e$-$p$-parabolic subgroup of $\symgp_{de}$. Then as $(\hecke_{de},\hecke_{de})$-bimodules, $B \mid B\otimes_{\hecke_\lambda} B$.
\end{theorem}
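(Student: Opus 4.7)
The plan is to adapt the proof of Theorem~\ref{hde_proj} to this setting, with the crucial modification that we work inside $B$ rather than all of $\hecke_{de}$. In the characteristic zero argument the element $\tilde{x} = \trace_\lambda^{(de)}(1)$ was invertible in $Z(\hecke_{de})$ by~\cite[Theorem 2.7]{du}, but in prime characteristic this need not hold globally. However, the preceding lemma has already established that the localised version $x_B = \trace_\lambda^{(de)}(e_B)$ is invertible in $Z(B)$, and this is precisely what we need.

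Concretely, I would define the element
\[
x := \trace_\lambda^{(de)}(e_B\otimes e_B) = \sum_{w\in \mathscr{R}_\lambda^{(de)}} q^{-\ell(w)}\, T_{w^{-1}}e_B \otimes e_B T_w \;\in\; B\otimes_{\hecke_\lambda} B,
\]
viewing $B\otimes_{\hecke_\lambda} B$ as a summand of $\hecke_{de}\otimes_{\hecke_\lambda}\hecke_{de}$ via the idempotent $e_B\otimes e_B$. Because $e_B\otimes e_B$ lies in $Z_{B\otimes_{\hecke_\lambda} B}(\hecke_\lambda)$ (elements of $\hecke_\lambda$ can be pushed across the tensor), the same general property of the trace map cited in the proof of Theorem~\ref{hde_proj} shows that $x$ lies in the centre of $B\otimes_{\hecke_\lambda} B$ as a $(\hecke_{de},\hecke_{de})$-bimodule. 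Hence the rule $h\mapsto hx = xh$ gives a well-defined $(\hecke_{de},\hecke_{de})$-bimodule homomorphism $\varphi: B \to B\otimes_{\hecke_\lambda} B$.

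For the retraction, invoke the invertibility of $x_B$ in $B$ and define $\psi: B\otimes_{\hecke_\lambda} B \to B$ by $a\otimes b \mapsto ab\,x_B^{-1}$, extended linearly. Since $x_B$ is central in $B$, this is a $(B,B)$-bimodule map, hence a $(\hecke_{de},\hecke_{de})$-bimodule map via the projection $\hecke_{de}\twoheadrightarrow B$. To verify $\psi\circ \varphi = \mathbbm{1}_B$, observe that the multiplication map $\mu:B\otimes_{\hecke_\lambda} B \to B$, $a\otimes b\mapsto ab$, is a bimodule homomorphism, so it commutes with the relative trace:
\[
\mu(x) = \mu\bigl(\trace_\lambda^{(de)}(e_B\otimes e_B)\bigr) = \trace_\lambda^{(de)}\bigl(\mu(e_B\otimes e_B)\bigr) = \trace_\lambda^{(de)}(e_B) = x_B.
\]
Therefore $\psi(x) = \mu(x)\,x_B^{-1} = e_B$, and for any $h\in B$ we get $\psi\circ\varphi(h) = \psi(hx) = h\psi(x) = h\,e_B = h$, as required. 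This exhibits $B$ as a direct summand of $B\otimes_{\hecke_\lambda} B$.

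There is no serious obstacle here because the key technical input — the invertibility of $x_B$ in $B$ — has already been handled in the preceding lemma using the computation $\theta(\overline{x_B}) = N_\lambda \neq 0$ coming from Proposition~\ref{upper_1}. The only mild point of care is to confirm that ``centrality'' of $x$ and compatibility of $\mu$ with the trace both work verbatim for the idempotent-cut-down elements $e_B\otimes e_B$ and $e_B$, which follows immediately because $e_B$ is central in $\hecke_{de}$ and the trace is a bimodule map.
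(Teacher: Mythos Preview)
Your proposal is correct and follows essentially the same approach as the paper: you define $\varphi(h)=h\,\trace_\lambda^{(de)}(e_B\otimes e_B)$ and $\psi(a\otimes b)=ab\,x_B^{-1}$, then verify $\psi\circ\varphi=\mathbbm{1}_B$, which is exactly what the paper does (it simply refers back to the argument of Theorem~\ref{hde_proj} for the verification you have spelled out in detail).
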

\begin{proof}
Define a map $\varphi:B\to B\otimes_{\hecke_\lambda} B$ by $h\mapsto h \trace_{\lambda}^{(de)}(e_B\otimes e_B)$ and $\psi: B\otimes_{\hecke_\lambda} B\to B$ by $a\otimes b \mapsto ab x_B^{-1}$ extended linearly. 

As in the proof of Theorem~\ref{hde_proj}, both are well-defined $(\hecke_{de},\hecke_{de})$-bimodule homomorphisms, and $\psi\circ \varphi = \mathbbm{1}_{\hecke_{de}}$. Thus $B$ is a direct summand of $B\otimes_{\hecke_\lambda} B$ as $(\hecke_{de}, \hecke_{de})$-bimodules.
\end{proof}
\begin{corollary}
\label{upper_bound}
As a $(\hecke_{de}, \hecke_{de})$-bimodule, $B$ is relatively $(\symgp_\lambda, \symgp_\lambda)$-projective.
\end{corollary}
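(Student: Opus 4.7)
The plan is to deduce this corollary from Theorem~\ref{dir_summand_B} by one application of the bimodule version of Higman's criterion, using that $B$ is a direct summand of $\hecke_{de}$ as a bimodule.

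First I would observe that since $B$ is a block of $\hecke_{de}$, it is a direct summand of $\hecke_{de}$ as a $(\hecke_{de}, \hecke_{de})$-bimodule. By restricting the actions, $B$ is in particular a direct summand of $\hecke_{de}$ both as a $(\hecke_{de}, \hecke_\lambda)$-bimodule and as a $(\hecke_\lambda, \hecke_{de})$-bimodule. Tensoring these summand relations over $\hecke_\lambda$ gives
\[ B \otimes_{\hecke_\lambda} B \mid \hecke_{de} \otimes_{\hecke_\lambda} \hecke_{de} \]
as $(\hecke_{de}, \hecke_{de})$-bimodules.

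Next, combining this with Theorem~\ref{dir_summand_B}, which states that $B \mid B \otimes_{\hecke_\lambda} B$ as $(\hecke_{de}, \hecke_{de})$-bimodules, we obtain
\[ B \mid \hecke_{de} \otimes_{\hecke_\lambda} \hecke_{de} \cong \hecke_{de}\otimes_{\hecke_\lambda} \hecke_\lambda \otimes_{\hecke_\lambda} \hecke_{de} \cong \hecku_{(de),(de)} \otimes_{\hecku_{\lambda,\lambda}} \hecke_\lambda, \]
using the associativity identification recorded just before Proposition~\ref{argue_down}. Applying the bimodule analogue of Higman's criterion (Theorem~\ref{higman}) with the $\hecku_{\lambda,\lambda}$-module $U = \hecke_\lambda$ then yields that $B$ is relatively $(\symgp_\lambda, \symgp_\lambda)$-projective, completing the proof.

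There is no real obstacle here: the work has been done in Theorem~\ref{dir_summand_B}, and the corollary is essentially a formal consequence of having $B$ split off from $\hecke_{de}$ on both sides so that the summand relation lifts through the tensor product over $\hecke_\lambda$.
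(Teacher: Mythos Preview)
Your proof is correct and essentially identical to the paper's own argument: both use that $B\mid\hecke_{de}$ on each side together with Theorem~\ref{dir_summand_B} to obtain $B\mid\hecke_{de}\otimes_{\hecke_\lambda}\hecke_{de}$, and then conclude via Higman's criterion. The only cosmetic difference is that the paper writes the chain $B\mid B\otimes_{\hecke_\lambda}B\mid\hecke_{de}\otimes_{\hecke_\lambda}B\mid\hecke_{de}\otimes_{\hecke_\lambda}\hecke_{de}$ one side at a time, whereas you tensor the two summand relations simultaneously.
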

\begin{proof}
By definition, $B \mid \hecke_{de}$ as a $(\hecke_{de}, \hecke_{de})$-bimodule, and therefore as both $(\hecke_\lambda, \hecke_{de})$ and $(\hecke_{de}, \hecke_\lambda)$-bimodules as well. By the previous theorem:
\[B \mid B\otimes_{\hecke_\lambda} B \mid \hecke_{de} \otimes_{\hecke_{\lambda}} B \mid \hecke_{de}\otimes_{\hecke_{\lambda}} \hecke_{de} \cong \hecke_{de}\otimes_{\hecke_{\lambda}}\hecke_{\lambda} \otimes_{\hecke_{\lambda}} \hecke_{de},\] showing $B$ is relatively $(\symgp_\lambda, \symgp_\lambda)$-projective.
\end{proof}
\section{Computing vertices of blocks}

In the previous sections, we showed that in all characteristics the empty-core block of $\hecke_{de}$ was $(\symgp_\lambda, \symgp_\lambda)$-projective, where $\symgp_\lambda$ is the standard maximal $e$-$p$-parabolic subgroup of $\symgp_{de}$. We also found a module in that block ($S^{(1^{de})}$) which had vertex $\symgp_\lambda$ too. We will first show that $(\symgp_\lambda, \symgp_\lambda)$ is actually the vertex of this block, before applying our Brauer correspondence from Section 4 to compute the vertices of all blocks.
\begin{proposition}
\label{lower_bound}
Let $B = B_{\emptyset, d}$ be the block of $\hecke_{de}$ with empty core, and $\symgp_\lambda$ the standard maximal $e$-$p$-parabolic subgroup of $\symgp_{de}$. Then as a $(\hecke_{de}, \hecke_{de})$-bimodule, $B$ has no vertex strictly contained in $(\symgp_\lambda, \symgp_\lambda)$. 
\end{proposition}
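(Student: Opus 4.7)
The plan is to combine the upper bound already established with a matching lower bound obtained from the sign Specht module. First I would verify that $(1^{de})$ has empty $e$-core and $e$-weight $d$, which one sees by successively stripping vertical $e$-hooks from the bottom of its Young diagram. Thus $S^{(1^{de})}$ is an indecomposable $\hecke_{de}$-module lying in $B = B_{\emptyset, d}$, and Theorems~\ref{sign_zero} and~\ref{sign_vertex} together state that its vertex is precisely $\symgp_\lambda$ in either characteristic.

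Now let $(\symgp_{\lambda_1}, \symgp_{\lambda_2})$ be a vertex of $B$ as a bimodule. By Corollary~\ref{upper_bound0} in characteristic zero or Corollary~\ref{upper_bound} in characteristic $p$, there exist $x_1, x_2\in \symgp_{de}$ with $\symgp_{\lambda_i}^{x_i}\subseteq \symgp_\lambda$ for $i=1,2$. In the opposite direction, applying Theorem~\ref{blocks_and_modules} to the right module $S^{(1^{de})}$ yields $y\in \symgp_{de}$ with $\symgp_\lambda^y\subseteq \symgp_{\lambda_2}$. Chaining the inclusions $\symgp_\lambda^y\subseteq \symgp_{\lambda_2}\subseteq \symgp_\lambda^{x_2^{-1}}$ and comparing orders forces $\symgp_{\lambda_2}$ to be $\symgp_{de}$-conjugate to $\symgp_\lambda$. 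For the left component, I would invoke the anti-automorphism $\alpha: T_w\mapsto T_{w^{-1}}$ of $\hecke_{de}$ already used in Section 3: twisting the bimodule $B$ by $\alpha$ on each tensor factor swaps its left and right actions and, because $\alpha$ fixes $e_B$, produces a bimodule isomorphism $B\cong B$ under which a vertex $(\symgp_{\lambda_1}, \symgp_{\lambda_2})$ becomes $(\symgp_{\lambda_2}, \symgp_{\lambda_1})$. By the uniqueness part of Theorem~\ref{vertices}, $\symgp_{\lambda_1}$ must be $\symgp_{de}$-conjugate to $\symgp_{\lambda_2}$, and therefore to $\symgp_\lambda$. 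No vertex of $B$ can then be strictly contained in $(\symgp_\lambda, \symgp_\lambda)$.

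The main obstacle is making the symmetry argument airtight, i.e.\ verifying that $\alpha$ fixes the block idempotent $e_B$. The natural route is to note that $\alpha$ restricts to an automorphism of the centre $Z(\hecke_{de})$ which permutes its central primitive idempotents, but preserves the $e$-core/$e$-weight labelling of blocks (since the combinatorial involution corresponding to $\alpha$ is seen to preserve these invariants), so each block idempotent is fixed. If this becomes delicate, the cleanest fall-back is to formulate the straightforward left-module analogue of Theorem~\ref{blocks_and_modules} (whose proof is identical once one uses $\hecke_{de}\cong \hecke_{de}^{\text{op}}$ via $\alpha$) and apply it to $S^{(1^{de})}$, whose left and right module structures coincide under $\alpha$ with common vertex $\symgp_\lambda$; this directly yields $y'\in \symgp_{de}$ with $\symgp_\lambda^{y'}\subseteq \symgp_{\lambda_1}$, and the cardinality argument then concludes as before.
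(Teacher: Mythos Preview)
Your argument for the right component is exactly the paper's: apply Theorem~\ref{blocks_and_modules} to $S^{(1^{de})}$ to force $\symgp_{\lambda_2}$ to contain a conjugate of $\symgp_\lambda$, hence (given the containment in $\symgp_\lambda$) to equal it. Where you diverge is in handling the left component. You reach for the anti-automorphism $\alpha$ to swap the roles of the two sides, and correctly flag that this requires $\alpha(e_B)=e_B$; your fallback via a left-module analogue of Theorem~\ref{blocks_and_modules} is also sound. Either route works, but both carry some overhead: the first needs the (true but not entirely trivial) fact that $\alpha$ fixes block idempotents, and the second requires restating and reproving a theorem.

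The paper sidesteps all of this with Proposition~\ref{argue_down}. Once $\symgp_{\tau_2}=\symgp_\lambda$ is established and one supposes $\symgp_{\tau_1}\subsetneq\symgp_\lambda$, the block $B$ is in particular $(\symgp_{\tau_1},\symgp_{de})$-projective; since $B\mid\hecke_{de}$ as a bimodule, Proposition~\ref{argue_down} then gives that $B$ is $(\symgp_{\tau_1},\symgp_{\tau_1})$-projective. Now the right component of a vertex sits inside $\symgp_{\tau_1}\subsetneq\symgp_\lambda$, contradicting the first step. This ``fold one side onto the other via $B\mid\hecke_{de}\otimes_{\hecke_\tau}\hecke_{de}$'' trick is shorter and avoids any appeal to $\alpha$ or to duplicating Theorem~\ref{blocks_and_modules}; it exploits the specific fact that $B$ is a summand of the regular bimodule, which your symmetry argument does not use.
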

\begin{proof}
Suppose that $B$ has a vertex $(\symgp_{\tau_1}, \symgp_{\tau_2})\subsetneq (\symgp_\lambda, \symgp_\lambda)$. By Corollary~\ref{block_test}, as $S^{(1^{de})}$ lies in this block and has vertex $\symgp_\lambda$ as a right $\hecke_{de}$-module (by Corollary~\ref{sign_zero} or Theorem~\ref{sign_vertex}), there must be some $g\in \symgp_n$ with $\symgp_\lambda^g \leq \symgp_{\tau_2} \leq \symgp_\lambda$, thus $\symgp_{\tau_2} = \symgp_\lambda$.

By earlier assumption, $\symgp_{\tau_1}\subsetneq \symgp_\lambda$. In particular, $B$ is $(\symgp_{\tau_1}, \symgp_n)$-projective and hence by Proposition~\ref{argue_down}, it is also $(\symgp_{\tau_1}, \symgp_{\tau_1})$-projective. This means that $B$ has a vertex which whose right vertex is contained in $\symgp_{\lambda}$. This cannot happen by the preceding argument, so $B$ has no vertex strictly contained within $(\symgp_\lambda, \symgp_\lambda)$.
\end{proof}
\begin{theorem}
\label{empty_core_vertex} Let $b$ be the block of $\hecke_{de}$ with empty core and $e$-weight $d$. Then $b$ has vertex $(\symgp_\lambda, \symgp_\lambda)$ where $\lambda$ is the composition of $de$ corresponding to the standard maximal $e$-$p$-parabolic subgroup of $\symgp_{de}$.
\end{theorem}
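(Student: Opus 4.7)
The plan is to combine the upper and lower bounds already established in Sections~6 and~7, which together pin down the vertex exactly. First, I would invoke the upper bound: Corollary~\ref{upper_bound0} in the characteristic zero case (with $\lambda = (e^d)$, which is indeed the standard maximal $e$-parabolic when $p = 0$) and Corollary~\ref{upper_bound} in the prime characteristic case together show that $b$ is relatively $(\symgp_\lambda, \symgp_\lambda)$-projective as a $(\hecke_{de}, \hecke_{de})$-bimodule. By the definition of vertex from Theorem~\ref{vertices}, this means some vertex of $b$ is contained, up to $\symgp_{de} \times \symgp_{de}$-conjugation, in $(\symgp_\lambda, \symgp_\lambda)$.

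Next, I would apply Proposition~\ref{lower_bound}, which asserts that $b$ admits no vertex strictly contained in $(\symgp_\lambda, \symgp_\lambda)$. Combining this with the upper bound, any vertex of $b$ must coincide with $(\symgp_\lambda, \symgp_\lambda)$ up to conjugation in $\symgp_{de}$, yielding the claimed result.

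The heavy lifting is already done in the earlier sections, so there is no genuine obstacle here; the challenge was setting up the two bounds. The upper bound rests on the construction of the splitting maps $\varphi, \psi$ using the relative trace and the invertibility of $x_B$ (or $\tilde{x}$ in characteristic zero), which in turn relies on the non-vanishing of $N_\lambda$ established via the computations with Poincar\'e polynomials and $z(P_n)$. The lower bound uses the fact that the sign module $S^{(1^{de})}$ lies in $b$ with vertex exactly $\symgp_\lambda$ (Corollary~\ref{se_vertex}/Theorem~\ref{sign_vertex}), which together with Theorem~\ref{blocks_and_modules} forces the right-hand component of the vertex to contain a conjugate of $\symgp_\lambda$, and Proposition~\ref{argue_down} transfers this to the left-hand component. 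Assembling these two facts gives the theorem in a single line.
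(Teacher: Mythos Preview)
Your proposal is correct and matches the paper's own proof essentially verbatim: invoke Corollary~\ref{upper_bound0} or Corollary~\ref{upper_bound} for the upper bound, then Proposition~\ref{lower_bound} for the lower bound, and conclude. The additional paragraph explaining the machinery behind those results is accurate commentary but not part of the proof proper.
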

\begin{proof}
By Corollary~\ref{upper_bound0} or Corollary~\ref{upper_bound} (depending on the characteristic), $b$ is $(\symgp_\lambda, \symgp_\lambda)$-projective, and hence has a vertex contained in $(\symgp_\lambda, \symgp_\lambda)$. Proposition~\ref{lower_bound} says $b$ cannot have a vertex strictly contained in $(\symgp_\lambda, \symgp_{\lambda})$, finishing the proof.
\end{proof}
\begin{proposition} 
\label{vertices_of_mu} Let $\rho$ be an $e$-core, $\mu = (\lvert \rho\rvert,de)\vDash n$, $\tau = (1^{\lvert \rho\rvert}, de)$, and $\symgp_\lambda$ the standard maximal $e$-$p$-parabolic subgroup of $\symgp_\tau$. Let $b_{\rho, 0}$ be the block of $\hecke_{|\rho|}$ corresponding to $\rho$, and $b_{\emptyset, d}$ the block of $\hecke_{de}$ with empty-core. Denote $b := b_{\rho, 0}\otimes b_{\emptyset, d}$ a block of $\hecke_\mu = \hecke_{|\rho|}\otimes \hecke_{de}$. Then $b$ has vertex $(\symgp_\lambda, \symgp_\lambda)$, and thus $b^{\hecke_n}$ exists.
\end{proposition}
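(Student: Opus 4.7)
The plan is to compute the vertex of $b$ by exploiting its tensor product structure, and then verify the hypothesis needed to invoke Corollary~\ref{brauer_existence}. I would begin by determining the vertex of each factor of $b = b_{\rho,0} \otimes b_{\emptyset,d}$ separately. For the factor $b_{\rho,0}$, since its $e$-weight is zero, this block is semi-simple (by~\cite[Theorem 1.2]{erdmann}) and hence is a matrix algebra over $F$. As a bimodule over itself, a matrix algebra satisfies $M_k(F) \mid M_k(F) \otimes_F M_k(F)$, so by the bimodule form of Higman's criterion (with trivial subalgebra $F = \hecke_{(1^{|\rho|})}$), the block $b_{\rho,0}$ has vertex $(\symgp_{(1^{|\rho|})}, \symgp_{(1^{|\rho|})})$ as a $(\hecke_{|\rho|}, \hecke_{|\rho|})$-bimodule. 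For the factor $b_{\emptyset, d}$, Theorem~\ref{empty_core_vertex} gives that its vertex as a $(\hecke_{de}, \hecke_{de})$-bimodule is $(\symgp_\nu, \symgp_\nu)$, where $\nu$ corresponds to the standard maximal $e$-$p$-parabolic subgroup of $\symgp_{de}$.

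Next I would combine these two vertex computations via a bimodule version of Theorem~\ref{vertex_tensor}. A $(\hecke_\mu, \hecke_\mu)$-bimodule is the same as a left $\hecku_{\mu,\mu}$-module, and under the identification $\hecku_{\mu,\mu} \cong \hecku_{|\rho|,|\rho|} \otimes \hecku_{de,de}$, the bimodule $b$ is an external tensor product of modules over the two factors. The proof of Theorem~\ref{vertex_tensor} carries over verbatim using Corollary~\ref{tensors} and the bimodule Higman criterion — both of which are established in the excerpt — to show that $b$ has vertex $(\symgp_{(1^{|\rho|})} \times \symgp_\nu,\, \symgp_{(1^{|\rho|})} \times \symgp_\nu) = (\symgp_\lambda, \symgp_\lambda)$, since by definition $\lambda = (1^{|\rho|}, \nu_1, \dots, \nu_s)$ concatenates the initial $1$'s with the parts of $\nu$.

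Finally, I would apply Corollary~\ref{brauer_existence} to deduce that $b^{\hecke_n}$ exists. For this I must check that $\symgp_\lambda$ is a fixed-point-free parabolic subgroup of $\symgp_\tau = \symgp_{(1^{|\rho|}, de)}$. By construction every non-trivial part of $\nu$ has the form $e p^r$ for some $r \geq 0$, and thus exceeds $1$, so every part of $\lambda$ after the initial $1^{|\rho|}$ is strictly greater than $1$; this is precisely the fixed-point-free condition, so Corollary~\ref{brauer_existence} applies.

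The main obstacle I anticipate is the first step: rigorously identifying the vertex of $b_{\rho, 0}$ as trivial requires noting that a weight-zero block is a direct summand of itself as an enveloping-algebra module, which follows from semi-simplicity of the block together with the bimodule analogue of Higman's criterion. After that, the argument is a routine combination of previously established results.
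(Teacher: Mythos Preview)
Your proposal is correct and follows essentially the same approach as the paper: show $b_{\rho,0}$ is projective via semi-simplicity, invoke Theorem~\ref{empty_core_vertex} for $b_{\emptyset,d}$, combine the two via Theorem~\ref{vertex_tensor}, and then verify the fixed-point-free hypothesis to conclude that $b^{\hecke_n}$ exists. The only cosmetic differences are that the paper cites Theorem~\ref{new_brauer} rather than Corollary~\ref{brauer_existence} for the final step, and that it applies Theorem~\ref{vertex_tensor} directly (using the identification $\hecku_{\mu,\mu}\cong\hecke_\sigma$ for the concatenated $\sigma$) rather than phrasing it as a separate ``bimodule version''.
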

\begin{proof}
Since blocks of $e$-weight $0$ are projective (they are semi-simple from~\cite[Theorem 1.2]{erdmann}), as a $(\hecke_\mu, \hecke_\mu)$-bimodule, $b$ has vertex $(\symgp_\lambda, \symgp_\lambda)$ by Theorem~\ref{empty_core_vertex} and Theorem~\ref{vertex_tensor}. Then $b^{\hecke_n}$ exists by Theorem~\ref{new_brauer} (as $e\mid de$, and $\symgp_\lambda$ is a fixed-point-free subgroup of $\symgp_\tau$).
\end{proof}
So we have shown that there exists a block of $\hecke_n$ with vertex $(\symgp_\lambda, \symgp_\lambda)$. We now need to identify this block, and show that all blocks can be found in this way.
\begin{theorem}[Classification of vertices of blocks of Hecke algebras]
\label{classification} Let $\rho$ be an $e$-core, $\mu = (\lvert \rho\rvert,de)\vDash n$, $\tau = (1^{\lvert \rho\rvert}, de)$, and $\symgp_\lambda$ the standard maximal $e$-$p$-parabolic subgroup of $\symgp_\tau$. Denote $B=B_{\rho, d}$, the block of $\hecke_n$ with $e$-core $\rho$ and $e$-weight $d$. Then $B$ has vertex $(\symgp_\lambda, \symgp_\lambda)$ as a $(\hecke_n, \hecke_n)$-bimodule.
\end{theorem}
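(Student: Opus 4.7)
The plan is to combine the Brauer correspondence (Theorem~\ref{new_brauer}) with a careful choice of test module and the module-level Green correspondence. First I would set $b := b_{\rho, 0}\otimes b_{\emptyset, d}$, viewed as a block of $\hecke_\mu = \hecke_{|\rho|}\otimes \hecke_{de}$. By Proposition~\ref{vertices_of_mu}, $b$ has vertex $(\symgp_\lambda, \symgp_\lambda)$ and its Brauer correspondent $b^{\hecke_n}$ exists. By Theorem~\ref{new_brauer}, $b^{\hecke_n}$ has the same vertex $(\symgp_\lambda, \symgp_\lambda)$. So the whole task reduces to identifying $b^{\hecke_n}$ as the block $B_{\rho, d}$.

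To identify $b^{\hecke_n}$ I would apply Corollary~\ref{block_test} to the test module $N := S^\rho \otimes S^{(1^{de})}$. This module lies in $b$ because $S^\rho$ lies in $b_{\rho, 0}$ (as $\rho$ is an $e$-core) and $S^{(1^{de})}$ lies in $b_{\emptyset, d}$. Since $\rho$ is an $e$-core, the results of Section~5 give that $S^\rho$ is projective with trivial vertex $\symgp_{(1^{|\rho|})}$. The sign module $S^{(1^{de})}$ has vertex equal to the standard maximal $e$-$p$-parabolic subgroup of $\symgp_{de}$ by Theorem~\ref{sign_zero} in characteristic zero and by Theorem~\ref{sign_vertex} in prime characteristic. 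Both Specht modules are indecomposable with endomorphism ring $F$ by Lemma~\ref{indecompspecht}, so the tensor $N$ is indecomposable. Applying Theorem~\ref{vertex_tensor} then forces the vertex of $N$ to be exactly $\symgp_\lambda$ (the product of the two vertices, matching the standard maximal $e$-$p$-parabolic of $\symgp_\tau$).

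Next I would exhibit a Green correspondent lying in $B_{\rho,d}$. The extended partition $\tilde{\rho} = (\rho_1,\dots,\rho_s, 1^{de})$ has $e$-core $\rho$ and $e$-weight $d$, obtained by successively removing the $d$ vertical $e$-hooks from the bottom of the added column of $1$'s, so the Specht module $S^{\tilde{\rho}}$ lies in $B_{\rho, d}$. Theorem~\ref{specht_rest} then gives $N\mid S^{\tilde{\rho}}$ as $\hecke_\mu$-modules. Since $\symgp_\lambda$ is fixed-point-free in $\symgp_\tau$ we have $N_{\symgp_n}(\symgp_\lambda)\subseteq \symgp_\mu$, so the module-level Green correspondence of~\cite[Theorem~3.6]{du} produces a Green correspondent $g(N)$ of $N$ in $\hecke_n$. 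The module analogue of Corollary~\ref{maps_corol} yields $g(N) \mid S^{\tilde{\rho}}$ as $\hecke_n$-modules, hence $g(N)$ lies in $B_{\rho, d}$. Corollary~\ref{block_test} now gives $b^{\hecke_n} = B_{\rho, d}$, and combining with the first paragraph finishes the proof.

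The bulk of this argument is really bookkeeping: the genuinely substantive content has already been proved, namely the Brauer correspondence (Theorem~\ref{new_brauer}), the vertex of the sign module (Theorems~\ref{sign_zero}, \ref{sign_vertex}), the Specht-restriction summand decomposition (Theorem~\ref{specht_rest}), and the compatibility of Green correspondence with blocks (Corollary~\ref{block_test}). The only point that requires a little care is the interface between module vertices (single parabolics) and bimodule vertices (pairs), which is handled cleanly by Theorem~\ref{vertex_tensor} once one checks that $S^\rho$ is projective and that the vertex of $S^{(1^{de})}$ is precisely the parabolic appearing in the $e$-$p$-adic expansion of $de$.
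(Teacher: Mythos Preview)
Your argument is essentially identical to the paper's own proof: set $b = b_{\rho,0}\otimes b_{\emptyset,d}$, quote Proposition~\ref{vertices_of_mu} for its vertex and the existence of $b^{\hecke_n}$, then use the test module $S^\rho\otimes S^{(1^{de})}$ together with Theorem~\ref{specht_rest} and Corollary~\ref{maps_corol} to force $b^{\hecke_n}=B_{\rho,d}$ via Corollary~\ref{block_test}. The only point you omit is the degenerate case $d=0$, which the paper disposes of in one line (weight-zero blocks are semisimple, hence projective as bimodules) before running exactly the argument you give for $d>0$.
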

\begin{proof} 
When $d = 0$, our block is semi-simple by~\cite[Theorem 1.2]{erdmann}, and thus is projective as a bimodule over itself and hence has trivial vertex as required. Now suppose $d > 0$. 

Consider the block $b_{\rho,0}\otimes b_{\emptyset,d}$ of $\hecke_\mu$. By the previous proposition this has vertex $(\symgp_{\lambda}, \symgp_\lambda)$, and has a Brauer correspondent; we will show that this is $B_{\rho,d}$, by applying Corollary~\ref{block_test}. 

$S^\rho \otimes S^{(1^{de})}$ is an indecomposable module which lies in $b$ with vertex $\symgp_\lambda$ by Theorem~\ref{sign_zero} or Theorem~\ref{sign_vertex}. Applying~\cite[Theorem 3.6]{du}, it has a Green correspondent $M$ in $\hecke_n$. By Theorem~\ref{specht_rest} $S^\rho \otimes S^{(1^e)} \mid S^{\tilde{\rho}}$ as $\hecke_\mu$-modules, so applying Corollary~\ref{maps_corol} (with $\sigma_1 = (1)$ and $\sigma_2 = (n)$), tells us that $M \mid S^{\tilde{\rho}}$ as $\hecke_n$-modules, thus $M$ lies in $B_{\rho,d}$. As such, we conclude with Corollary~\ref{block_test} that $B = b^{\hecke_n}$ and hence has vertex $(\symgp_\lambda, \symgp_\lambda)$.
\end{proof}
\section{The Dipper--Du conjecture}

One application of our classification of blocks, is resolving the Dipper--Du conjecture given in the introduction This was first stated as~\cite[Conjecture 1.9]{dipperdu}, and shown to be true for Young modules in~\cite[\S 5]{dipperdu}, for fields of characteristic zero in~\cite[Theorem 3.1]{du}, and in blocks of $e$-weight 1 in~\cite[Theorem 18.1.13]{schmider}. Note that in~\cite{counterexample}, a supposed counter-example was given to this conjecture when $p = 2$ and $e = 3$. Here, an indecomposable $\hecke_3$-module $M$ is found, which is $\hecke_{(2,1)}$-projective as a $\hecke_3$-module. However, as $\hecke_{(2,1)}$ is semi-simple when $e=3$, $M$ is a projective $\hecke_{(2,1)}$-module, and hence by Corollary~\ref{transitivity}, is projective as a $\hecke_3$-module. This contradicts the earlier statement in~\cite{counterexample} that $M$ could not be projective~\cite[Theorem 2.2 Part (2)]{counterexample}. We are able to use our classification to prove this conjecture:
\begin{theorem}
Let $F$ be an (algebraically closed) field of characteristic $p > 0$, $n\in \mathbb{N}$, and $q\in F$ a primitive $e$-th root of unity. Then the vertices of indecomposable $\hecke_n$-modules are $e$-$p$-parabolic.
\end{theorem}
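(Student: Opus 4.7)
The plan is to prove the conjecture by strong induction on $n$, combining the block classification (Theorem~\ref{classification}) with Theorem~\ref{blocks_and_modules} and the vertex-tensor formula (Theorem~\ref{vertex_tensor}), along with the standard module version of Lemma~\ref{3.2i} found in~\cite[Lemma 3.2]{du}. The base case $n = 1$ is immediate, since the only indecomposable $\hecke_1$-module has vertex $\symgp_1$, corresponding to the composition $(1)$ and thus $e$-$p$-parabolic.

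For the inductive step, let $M$ be an indecomposable $\hecke_n$-module with vertex $\symgp_\gamma$, where $\gamma = (\gamma_1, \ldots, \gamma_s) \vDash n$. I split into two cases according to whether $\symgp_\gamma$ is a proper subgroup of $\symgp_n$ or equals it. First suppose $\symgp_\gamma \subsetneq \symgp_n$; then necessarily $s \geq 2$ and each $\gamma_i < n$. By~\cite[Lemma 3.2]{du} applied with $\mu = \gamma$, there exists an indecomposable $\hecke_\gamma$-module $P$ with $P \mid M$ as $\hecke_\gamma$-modules and vertex $\symgp_\gamma$. Using the identification $\hecke_\gamma \cong \hecke_{\gamma_1} \otimes \cdots \otimes \hecke_{\gamma_s}$, the indecomposability of $P$ forces a decomposition $P = P_1 \otimes \cdots \otimes P_s$ with each $P_i$ indecomposable. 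Theorem~\ref{vertex_tensor} then identifies the vertex of $P$ with the product of the vertices of the $P_i$, so each $P_i$ must have vertex $\symgp_{\gamma_i}$ (the whole of $\symgp_{\gamma_i}$). The inductive hypothesis, valid since $\gamma_i < n$, tells us that each $\symgp_{\gamma_i}$ is itself an $e$-$p$-parabolic subgroup of $\symgp_{\gamma_i}$; equivalently, $\gamma_i \in \{1\} \cup \{ep^{r_i} : r_i \geq 0\}$. Consequently $\symgp_\gamma$ is $e$-$p$-parabolic.

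If instead $\symgp_\gamma = \symgp_n$, let $B_{\rho,d}$ be the block of $\hecke_n$ containing $M$. By Theorem~\ref{classification} this block has vertex $(\symgp_\lambda, \symgp_\lambda)$, where $\symgp_\lambda \subseteq \symgp_{(1^{|\rho|}, de)}$ is the standard maximal $e$-$p$-parabolic subgroup. Theorem~\ref{blocks_and_modules} gives $x \in \symgp_n$ with $\symgp_\gamma^x \subseteq \symgp_\lambda$, and since $\symgp_\gamma = \symgp_n$ is self-conjugate, this forces $\symgp_\lambda = \symgp_n$. But $\symgp_\lambda$ is a direct product of factors of the form $\symgp_1$ and $\symgp_{ep^{r_i}}$, so this equality can only hold if $|\rho| = 0$ and the $e$-$p$-adic expansion of $de = n$ consists of a single nonzero term, i.e.\ $n = ep^r$ for some $r \geq 0$. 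Then $\symgp_n$ corresponds to the composition $(ep^r)$ and is manifestly $e$-$p$-parabolic.

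The main obstacle is the second case: without the block classification one could not rule out indecomposable modules with vertex $\symgp_n$ for values of $n$ not of the form $ep^r$, and Theorem~\ref{classification} is exactly what closes this gap. Once that case is handled, the induction in the first case is essentially automatic from the tensor-vertex formula, and the overall argument reduces the global statement to a very local obstruction that the block machinery has already resolved.
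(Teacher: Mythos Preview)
Your proof is correct, and it uses the same essential ingredients as the paper (the block classification Theorem~\ref{classification}, Theorem~\ref{blocks_and_modules}, and~\cite[Lemma 3.2]{du}), but the organisation is genuinely different. The paper argues directly, with no induction: it takes a source $N$ for $M$ in $\hecke_\tau$ (part (b) of~\cite[Lemma 3.2]{du}), observes that $N$ lies in some block $b = b_{\rho_1,d_1}\otimes\cdots\otimes b_{\rho_s,d_s}$ of $\hecke_\tau$, applies the classification and Theorem~\ref{vertex_tensor} to compute the vertex $(\symgp_\lambda,\symgp_\lambda)$ of $b$, and then uses Theorem~\ref{blocks_and_modules} once (for $\hecke_\tau$) to force $\symgp_\lambda = \symgp_\tau$, hence each $\tau_i\in\{1,ep^r\}$.

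Your inductive version instead uses part (a) of~\cite[Lemma 3.2]{du} to find $P\mid M$ with vertex $\symgp_\gamma$, splits $P$ into tensor factors, and pushes the problem down to smaller $\hecke_{\gamma_i}$; only in the base case $\symgp_\gamma=\symgp_n$ do you invoke the block classification. Both routes are sound. The paper's is cleaner in that it applies the classification once, uniformly, and needs no case split or induction; yours has the minor virtue of isolating the single obstruction (vertex equal to the full symmetric group) and showing explicitly that the classification is used precisely to kill that case.
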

\begin{proof}
Let $M$ be an indecomposable (right) $\hecke_n$-module with vertex $\symgp_\tau$, where $\tau = (\tau_1, \dots, \tau_s) \vDash n$. By~\cite[Lemma 3.2]{du}, there is an indecomposable $\hecke_\tau$-module $N$ such that $M \mid N\otimes_{\hecke_\tau} \hecke_n$ and $N$ has vertex $\symgp_\tau$. As $N$ is indecomposable, $N$ must belong to a block $b$ of $\hecke_\tau$, with
\[b = b_{\rho_1, d_1}\otimes \cdots\otimes b_{\rho_s, d_s},\]
where $b_{\rho_i,d_i}$ is the block of $\hecke_{\tau_i}$ corresponding to $e$-core $\rho_i$ and $e$-weight $d_i$. By Theorem~\ref{classification}, $b$ has vertex $(\symgp_\lambda, \symgp_\lambda)$ where $\symgp_\lambda \cong \symgp_{\lambda^1}\times \dots\times\symgp_{\lambda^s}$, and $\symgp_{\lambda^i}$ is the standard maximal $e$-$p$-parabolic subgroup of $\symgp_{(1^{\lvert \rho_i \rvert})}\times \symgp_{d_ie}\subset \symgp_{\tau_i}$. As $N$ lies in the block $B$, we must have that $\symgp_\tau\subset_{\symgp_\tau} \symgp_\lambda$, and thus by Theorem~\ref{blocks_and_modules},  $\symgp_\lambda = \symgp_\tau$. In particular, for each $i$, we get $\symgp_{\lambda^i} = \symgp_{\tau_i}$.

Thus each $(\tau_i)\vDash \tau_i$ is an $e$-$p$-parabolic composition, so either $\tau_i = ep^r$ for some $r\geq 0$, or $\tau_i = 1$.

Hence $\tau = (\tau_1, \dots, \tau_s)$ is an $e$-$p$-parabolic composition, and thus $\symgp_\tau$ is an $e$-$p$-parabolic subgroup.
\end{proof}

\subsection*{Acknowledgements}\label{ackref}
I wish to thank Simon Goodwin for his insights and helpful comments while putting this paper together. In particular, I would like to thank my previous PhD supervisor Anton Evseev for all his guidance and support. Finally I am grateful to the referee for their helpful comments.

%
\end{document}